\tikzset{snake it/.style={decorate, decoration=snake}}
\theoremstyle{plain}
\newtheorem{thm}{Theorem}[section]
\newtheorem{cor}[thm]{Corollary}
\newtheorem{lem}[thm]{Lemma}
\newtheorem{prop}[thm]{Proposition}
\theoremstyle{definition}
\newtheorem{example}[thm]{Example}
\theoremstyle{remark}
\newtheorem{rmk}[thm]{Remark}
\DeclareFontFamily{OT1}{rsfs}{}
\DeclareFontShape{OT1}{rsfs}{n}{it}{<-> rsfs10}{}
\DeclareMathAlphabet{\curly}{OT1}{rsfs}{n}{it}
\newcommand\Spec{\operatorname{Spec}}
\newcommand{\Coh}{\mathrm{Coh}}
\newcommand{\Pic}{\mathop{\rm Pic}\nolimits}
\def\Z{\mathbb{Z}}
\def\C{\mathbb{C}}
\def\P{\mathbb{P}}
\def\O{\mathcal{O}}
\def\a{\alpha}
\def\b{\beta}
\def\m{\mu}
\def\t{\theta}
\def\s{\sigma}
\def\sm{\Sigma}
\def\ts{\widetilde{\Sigma}}
\def\m{\mathcal}
\def\f{\mathfrak}
\def\c{\mathscr}
\def\bb{\mathbb}
\def\td{\widetilde}
\def\wh{\widehat}
\def\Spec{\textrm{Spec}}
\def\deg{\textrm{deg}}
\def\ch{\textrm{ch}}
\def\c{\textrm{c}}
\def\rk{\textrm{rk}}
\def\dim{\textrm{dim}}
\def\rad{\textrm{rad}}
\def\ker{\textrm{ker}}
\def\Prym{\textrm{Prym}}
\def\Nm{\textrm{Nm}}
\def\D{\textrm{D}}
\def\Pr{\textrm{Pr}}
\begin{document}
\title{Moduli spaces of modules over even Clifford algebras and Prym varieties}

\author{Jia Choon Lee}
\address{Peking University, Beijing International Center for Mathematical Research, Jingchunyuan Courtyard \#78, 5 Yiheyuan Road, Haidian District,
Beijing 100871, China}
\email{jiachoonlee@gmail.com}

\begin{abstract}
A conic fibration has an associated sheaf of even Clifford algebras on the base. In this paper, we study the relation between the moduli spaces of modules over the sheaf of even Clifford algebras and the Prym variety associated to the conic fibration. In particular, we construct a rational map from the moduli space of modules over the sheaf of even Clifford algebras to the special subvarieties in the Prym variety, and check that the rational map is birational in some cases. As an application, we get an explicit correspondence between instanton bundles of minimal charge on cubic threefolds and twisted Higgs bundles on curves. 
\end{abstract}

\baselineskip=14.5pt
\maketitle

\setcounter{tocdepth}{1} 

\tableofcontents
\section{Introduction}
It is well-known that a smooth cubic threefold is irrational since the famous work of Clemens and Griffiths \cite{clemens1972intermediate}. They observed that if a threefold is rational, then its intermediate Jacobian must
be isomorphic to a product of Jacobians of curves. The problem is then reduced to comparing the intermediate Jacobians of cubic threefolds with the Jacobians of curves as principally polarized abelian varieties by studying the singularity loci of their theta divisors. 

Despite the success in dimension three, the rationality problem for cubic fourfolds is still not well understood. A categorical approach to the problem is to study the bounded derived category $D^b(Y_4)$ of a given smooth cubic fourfold $Y_4$ and it always admits a semiorthogonal decomposition: 
\begin{equation}\label{semiorthogonal decomposition: cubic}
    \D^b(Y_4) \cong \langle \m Ku(Y_4), \O_{Y_4}, \O_{Y_4}(1), \O_{Y_4}(2)\rangle 
\end{equation}
where $\m Ku(Y_4) := \langle\O_{Y_4}, \O_{Y_4}(1), \O_{Y_4}(2)\rangle ^\perp $ is now known as the Kuznetsov component. It is conjectured by Kuznetsov \cite{kuznetsov2010cubic} that a smooth cubic fourfold is rational if and only if the Kuznetsov component $\m Ku(Y_4)$ is equivalent to the category of a K3 surface. While the conjecture has been checked to hold in some cases, the general conjecture remains unsolved.

Since $\m Ku(Y_4)$ is expected to capture the geometry of $Y_4$, an attempt to extract information out of the triangulated category $\m Ku(Y_4)$ is to construct Bridgeland stability conditions on $\m Ku(Y_4)$ and consider their moduli spaces of stable objects \cite{bayer2017stability}\cite{bayer2021stability}. In dimension three, one can define in the same way $\m Ku(Y_3):= \langle \O_{Y_3},\O_{Y_3}(1)\rangle^\perp$ for a smooth cubic threefold $Y_3$ and it can be shown that $\m Ku(Y_3)$ reconstructs the Fano surface of lines of $Y_3$ as a moduli space of stable objects with suitable stability conditions. The reconstruction of the Fano surface of lines then determines the intermediate Jacobian $J(Y_3)$ \cite{bernardara12cubic}. Alternatively, it is observed that instanton bundles of minimal charge on $Y_3$ are objects in $\m Ku(Y_3)$. Then by the work of Markushevich-Tikhomirov and others \cite{markushevich1998}\cite{Iliev2000}\cite{Druel}\cite{beauville02cubic}, it is shown that the moduli space of instanton bundles of minimal charge on $Y_3$ is birational to the intermediate Jacobian $J(Y_3)$. So the Kuznetsov components $\m Ku(Y_3)$ and $\m Ku(Y_4)$ can be thought of as the categorical counterparts of the intermediate Jacobian whose success in the rationality problem of cubic threefolds fits well into the philosophy of Kuznetsov's conjecture in $n=4.$  

On the other hand, we know that both $Y_3$ and $Y_4$ admit a conic bundle structure. More precisely, for $n=3,4$, let $l_0\subset Y_n$ be a line that is not contained in a plane in $Y_n.$ Then the blow-up $\td{Y}_n:=Bl_{l_0}(Y_n)\subset Bl_{l_0}(\P^{n+1})$ of $Y_n$ along $l_0$ projects to a projective space $\P^{n-1}$, denoted by $p:\td{Y}_n\to \P^{n-1}$. The map $p$ is a conic bundle whose discriminant locus $\Delta_n$ is a degree 5 hypersurface. The idea of realizing a cubic hypersuface birationally as a conic bundle can be used to study its rationality. Following the idea of Mumford, it is shown that the intermediate Jacobian $J(Y_3)\cong J(\td{Y}_3)$ is isomorphic to $\Prym (\td{\Delta}_3,\Delta_3)$ where $\td{\Delta}_3$ is the double cover parametrizing the irreducible components of the degenerate conics over $\Delta_3$. By analyzing the difference between Prym varieties and the Jacobian of curves as principally polarized abelian varieties, it is again shown that a smooth cubic threefold is irrational. 

The conic bundle structure of a cubic hypersurface also provides us information at the level of derived category. A quadratic form on a vector space defines the Clifford algebra which decomposes into the even and odd parts. We can apply the construction of Clifford algebra relatively for the conic bundles $\td{Y}_n$ which is viewed as a family of conics over $\P^{n-1}$, and obtain a sheaf of even Clifford algebras $\m B_0$ on $\P^{n-1}$. The bounded derived category $\D^b(\P^{n-1},\m B_0)$ of $\m B_0$-modules appears as a component of the semiorthogonal decomposition of the conic bundle $\td{Y}_n$ \cite{kuznetsov2008derived}: 
\begin{equation}\label{semiorthogonal decomposition: quadric fibration}
    \D^b( \td{Y}_n)  = \langle \D^b(\P^{n-1}, \m B_0), p^*\D^b(\P^{n-1})\rangle.
\end{equation}
In the case $n=3,4$, by comparing the semiorthogonal decompositions in (\ref{semiorthogonal decomposition: cubic}), (\ref{semiorthogonal decomposition: quadric fibration}) and the one for blowing up, one can show that there are embedding functors
\begin{equation}\label{embedding functor}
    \Xi _ n : \m Ku(Y_n) \hookrightarrow \D^b(\P^{n-1},\m B_0)
\end{equation}
for $n=3,4$ (see \cite{bernardara12cubic}, \cite{bayer2021stability}). The functors $\Xi_n$ are useful in the study of $\m Ku(Y_n)$. For example, when $n=3,4$, the construction of Bridgeland stability conditions on $\m Ku(Y_n)$ carried out in \cite{bernardara12cubic}\cite{bayer2021stability} uses the embedding functors $\Xi_n$ as one of the key steps. Also, in the work of Lahoz-Macr\`{i}-Stellari \cite{Lahoz15ACM}, the functor $\Xi_3$ is used to provide a birational map between the moduli space of instanton bundles of minimal charge and the moduli space of $\m B_0$-modules. The moduli space of $\m B_0$-modules was also first considered in \cite{Lahoz15ACM}. 

Motivated by the relations found in the case of cubic threefolds as described above: \linebreak Prym/intermediate Jacobian, intermediate Jacobian/moduli space of instanton bundles, and instanton bundles/$\m B_0$-modules, it is natural to search for a relation between $\m B_0$-modules and the Prym varieties. In this paper, we will focus on three dimensional conic fibrations over $\P^2$ (not necessarily obtained from a cubic threefold) and study the relation between the moduli spaces of $\m B_0$-modules and the Prym varieties. 

Let $p:X\to \P^2$ be a three dimensional conic fibration over $\P^2$ i.e. a flat quadric fibration of relative dimension 1 over $\P^2$ with simple degeneration i.e. the degenerate conics cannot be double lines and the discriminant locus $\Delta$ in $\P^2$ is smooth. Let $\pi:\td{\Delta}\to\Delta$ be the double cover parametrizing the irreducible components of degenerate conics over $\Delta$. We consider the moduli space $\f M_{d,e}$ of semistable $\m B_0$-modules whose underlying $\O_{\P^2}$-modules have fixed Chern character $(0,2d,e)$. This means that the $\m B_0$-modules are supported on plane curves. The moduli space $\f M_{d,e}$ comes with a morphism $\Upsilon: \f M_{d,e} \to |\O_{\P^2}(d)|$ defined by sending a $\m B_0$-module to its schematic (Fitting) support.   

On the Prym side, by the work of Welters \cite{Welters81AJ_Isogenies} and Beauville \cite{beauville1982sous}, each linear system on $\Delta$ defines a so-called special subvariety in the Prym variety $\Prym(\td{\Delta},\Delta)$ of the \'{e}tale double cover $\pi:\td{\Delta}\to \Delta$. We apply the construction to the linear system $|L_d|=\lvert \O_{\P^2}(d)|_\Delta\rvert$. For each $k$, there is an induced morphism $\pi^{(k)}:\td{\Delta}^{(k)}\to \Delta^{(k)}$ between the $k$-th symmetric products of $\td{\Delta}$ and $\Delta$. As the linear system $|L_d|$ can be considered as a subvariety in $\Delta^{(k)}$ for $k= \deg(\Delta)\cdot d$, we define the variety of divisors lying over $|L_d|$ as $W_d=(\pi^{(k)})^{-1} (|L_d|)\subset \td{\Delta}^{(k)}$. Then the image of $W_d$ under the Abel-Jacobi map $\td{\a}:\td{\Delta}^{(k)}\to J^k\td{\Delta}$ lies in the two components of (the translate of) Prym varieties, which are called the special subvarieties. The variety $W_d$ consists of two components $W_d=W_d^0\cup W_d^1$, each of which maps to $|L_d|$. For $d<\deg(\Delta)$, we have $|\O_{\P^2}(d)|\cong |L_d|$, and we denote by $U_d\subset |\O_{\P^2}(d)|$ the open subset of smooth degree $d$ curves intersecting $\Delta$ transversally. 

The main construction in this paper is to construct a morphism over $U_d$
\begin{equation}
    \Phi: \f M_{d,e}|_{U_d} \to W_d|_{U_d}
\end{equation}
for $d=1,2, i= 0,1$ and $ e\in \Z$. Here $\f M_{d,e}|_{U_d}\subset \f M_{d,e}$ is the open subset of $\m B_0$-modules supported on curves in $U_d$ and $W_d|_{U_d}\subset W_d$ is the open subset consisting of the degree $k$ divisors in $\td{\Delta}$ whose images in $\Delta$ represent degree $k$  reduced divisors in $L_d$. Moreover, the morphism $\Phi$ is used to prove the following (see Theorem \ref{parity and birational type }):
\begin{thm}
   Let $d=1, 2$ and $e\in \Z$. 
\begin{enumerate}
    \item The moduli space $\f M_{d,e}$ is birational to one of the two connected components $W_d^i$ of $W_d.$
    \item If $\f M_{d,e}$ is birational to $W_d^i$, then $\f M_{d,e+1}$ is birational to $W_d^{1-i}.$ In particular, the birational type of $\f M_{d,e}$  only depends on $d$ and $(e$ mod $2)$.
\end{enumerate}
\end{thm}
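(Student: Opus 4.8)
The plan is to study $\Phi$ fibrewise over $U_d$, to recognise it there as an isomorphism onto one of the two finite covers $W_d^i\to|L_d|$, and then to upgrade this to a birational statement on the full moduli spaces. Set $k=\deg(\Delta)\cdot d$ with $d\in\{1,2\}$; the crucial feature of these two cases is that every $C\in U_d$ is a line or a smooth conic, hence $C\cong\P^1$, meeting $\Delta$ transversally in $k$ reduced points. I would first describe explicitly a semistable $\m B_0$-module $M$ with $\ch(M)=(0,2d,e)$ and support $C$. Over $C\setminus\Delta$ the fibres of $X\to\P^2$ are smooth conics, so $\m B_0$ is Azumaya there, and since $\C(C)$ is a $C_1$-field (Tsen's theorem) this Azumaya algebra is Morita trivial; thus $M|_{C\setminus\Delta}$ is the Morita transform of a rank-$2$ bundle on $C\setminus\Delta$, which on $C\cong\P^1$ is trivial up to twist. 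At each $x\in C\cap\Delta$ the local algebra $\m B_{0,x}$ is a non-maximal hereditary order with exactly two simple modules, and a torsion-free module over it of the local rank of $M$ is determined by the choice of one of these two simples --- equivalently, a point of $\pi^{-1}(x)\subset\td{\Delta}$, i.e.\ one of the two components of the degenerate conic over $x$. Assembling the local data shows that $M$ is determined up to isomorphism by $C$, by the induced lift $D=\Phi(M)\in\td{\Delta}^{(k)}$ of $C\cap\Delta$, and by a line-bundle twist on $C\cong\P^1$, the last ambiguity being killed by $\ch(M)=(0,2d,e)$. Conversely, from any $C\in U_d$ and any compatible lift $D$ this recipe builds a module, stable since it has rank one over $\m B_0$. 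Hence $\Phi$ and this recipe are mutually inverse over the good locus.

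It follows that $\Phi$ restricts to an isomorphism of $\f M_{d,e}|_{U_d}$ onto its image in $W_d|_{U_d}$, identifying fibres of $\Upsilon$ over $C\in U_d$ with fibres of $\pi^{(k)}$ over $C\cap\Delta\in|L_d|$; this is consistent with $\dim\f M_{d,e}|_{U_d}=\dim|\O_{\P^2}(d)|$, both sides being generically finite over $|L_d|\cong|\O_{\P^2}(d)|$ (here $d<\deg\Delta$). The choice of component is then pinned by the Chern character: switching the sheet of the lift $D$ over a single point $x\in C\cap\Delta$ replaces $M$ by an elementary modification at $x$, which changes $\ch_2$ by $\pm1$ while leaving $(\ch_0,\ch_1)=(0,2d)$ fixed. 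Hence, among the $2^k$ lifts of a fixed reduced divisor $C\cap\Delta$, precisely those in one parity class --- relative to any chosen reference --- realise $\ch_2=e$, and the two parity classes on $\pi^{-1}(C\cap\Delta)$ are exactly those cut out by the decomposition $W_d=W_d^0\cup W_d^1$. Since $U_d$ is connected this parity class is the same for every $C$, so $\Phi$ identifies $\f M_{d,e}|_{U_d}$ with $W_d^{i}|_{U_d}$ for a single $i$, and the class flips as $e\rightsquigarrow e+1$, giving $\f M_{d,e+1}|_{U_d}\cong W_d^{1-i}|_{U_d}$. Granting the density step below, this yields both parts of the theorem; the dependence on $(d,e\bmod 2)$ only is then automatic, and compatible with the fact that tensoring by $\O_{\P^2}(1)$ sends $\ch=(0,2d,e)$ to $(0,2d,e+2d)$.

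It remains to pass from the open loci to the full spaces, i.e.\ to show $\f M_{d,e}|_{U_d}$ is nonempty and dense in $\f M_{d,e}$ (density of $W_d^i|_{U_d}$ in $W_d^i$ being clear). Nonemptiness is immediate from the construction in the first paragraph applied to any lift $D\in W_d^i|_{U_d}$. Density is the step I expect to be the main obstacle: it amounts to ruling out any irreducible component of $\f M_{d,e}$ lying entirely over the locus $|\O_{\P^2}(d)|\setminus U_d$ of singular or non-transverse curves. I would attack this by deformation theory: given a module $M_0$ supported on a bad curve $C_0$, deform $C_0$ to a curve in $U_d$ and lift $M_0$ along that deformation, controlling the obstruction in $\Ext^2_{\m B_0}(M_0,M_0)$ (which should vanish, or at least be manageable, for these low-degree, generically rational support curves), so that every component of $\f M_{d,e}$ meets $\f M_{d,e}|_{U_d}$; alternatively one could invoke an irreducibility statement for moduli of semistable $\m B_0$-modules on $\P^2$ if one is available. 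With density in hand, $\f M_{d,e}|_{U_d}\cong W_d^i|_{U_d}$ is dense in $\f M_{d,e}$, which forces $\f M_{d,e}$ to be irreducible and birational to the single component $W_d^i$, with $i$ depending only on $d$ and $e\bmod 2$ and swapping under $e\mapsto e+1$ --- which is the theorem.
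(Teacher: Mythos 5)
Your proposal is correct in outline and rests on the same mechanism as the paper's proof --- a two-fold local choice at each point of $C\cap\Delta$, a parity constraint imposed by $\ch_2$, identification of the two parity classes with the halves $W_d^0,W_d^1$ (Remark \ref{involution by even card subsets}), and a single sheet-switch to pass from $e$ to $e+1$ --- but your technical route is genuinely different. The paper classifies the fibre $\Upsilon^{-1}(C)$ via the root stack $\wh{C}$ (every module is $\psi_*(E_0\otimes\wh{L})$, Theorem \ref{fibers as even card subsets}), shows by the quiver/path-algebra analysis that the fibre of a module at $p\in C\cap\Delta$ has one of exactly two representation types (Propositions \ref{vector subspaces} and \ref{type 1-2 argument}), and then performs an explicit local computation (Proposition \ref{morphism of torsors}) proving that twisting by $\O(p/2)$ flips the kernel, i.e.\ that $\Phi$ is equivariant for the group of even sign changes; birationality then follows from the torsor formalism, finiteness of $\Upsilon$ over a dense open $V\subset U_d$, and normality. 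You instead trivialize $\m B_0$ generically by Tsen/Morita and classify the local modules at $C\cap\Delta$ as indecomposable projectives over a hereditary order, \emph{asserting} that the two local types correspond to the two points of $\pi^{-1}(p)$; that assertion is true, but it is exactly the content of the paper's Propositions \ref{vector subspaces}, \ref{type 1-2 argument} and \ref{morphism of torsors}, so a complete write-up would need that computation or a citation to the orders literature. Likewise, your appeal to connectedness of $U_d$ needs the small supplementary argument the paper gives (pass to a dense open where $\Upsilon$ is finite, so that the loci of $C$ whose fibre maps into $W_d^0$ resp.\ $W_d^1$ are closed and partition an irreducible set). Finally, the density step you single out as the main obstacle is not addressed in the paper at all: its proof produces an isomorphism $\f M_{d,e}|_V\cong W_d^i|_V$ and concludes birationality directly, implicitly taking density of $\f M_{d,e}|_{U_d}$ in $\f M_{d,e}$ for granted, so on this point you are if anything more cautious than the paper, though your proposed deformation-theoretic fix remains a sketch.
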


The idea behind the construction of the morphism $\Phi$ is based on a study of the representations of degenerate even Clifford algebras. A $\m B_0$-module in $\f M_{d,e}|_{U_d}$ is supported on a plane curve $C$ which intersects the discriminant curve $\Delta$ in finitely many points. Then the $\m B_0$-module restricted to each of these points $p\in C\cap \Delta$ gives rise to a representation of a degenerate even Clifford algebra, which is in turn shown to be equivalent to a representation of the path algebra associated the quiver 
\begin{equation*}
\begin{tikzcd}
        +\arrow[r,bend left,"\a"]&-\arrow[l,bend left,"\b"]
\end{tikzcd}
\end{equation*}
with relations $\a\b=\b\a=0.$ Then an analysis of the representations coming from $\m B_0$-modules reveals that there are natural candidates determining canonically the required lift of the point $p\in C\cap \Delta$ to $\td{\Delta}$, hence an element in $W_d|_{U_d}$.

By composing with the Abel-Jacobi map $\td{\a}:\td{\Delta}^{(k)}\to J^k\td{\Delta}$ that maps to the varieties $J^k\td{\Delta}$ of degree $k$ invertible sheaves on $\td{\sm}$, we obtain a rational map (choose a base point in $J^k\td{\Delta}$ to identify with $J^0\td{\Delta}$) 
\begin{equation}
   \td{\a}\circ \Phi: \f M_{d,e}\dashrightarrow \Prym(\td{\Delta},\Delta)    
\end{equation}
whose image is an open subset of a special subvariety. 

Next, we apply the result above to the case of cubic threefolds. In \cite{kuznetsov2012instanton} and \cite{Lahoz15ACM}, it is observed that instanton bundles of minimal charge are objects in $\m Ku(Y_3)$. The authors use the functor $\Xi_3: \m Ku(Y_3)\hookrightarrow \D^b(\P^2,\m B_0)$ to deduce a birational map between the moduli space $\f M_{Y_3}$ of instanton bundles of minimal charge on $Y_3$ and the moduli space $\f M_{2,-4}$ of $\m B_0$-modules. In this case, the rational map $\f M_{2,-4}\dashrightarrow \Prym(\td{\Delta},\Delta)$  actually turns out to be birational. Hence, by composing the birational maps, we get 
\begin{equation}\label{instanton moduli= prym}
    \f M_{Y_3}\dashrightarrow \Prym(\td{\Delta},\Delta)
\end{equation}
As a point in $\Prym(\td{\Delta},\Delta)$ can be interpreted as a twisted Higgs bundle on $\Delta$ by the spectral correspondence \cite{beauville1989spectral}, the birational map (\ref{instanton moduli= prym}) gives an explicit correspondence between instanton bundles of minimal charge on $Y_3$ and twisted Higgs bundles on $\Delta$. 

Moreover, as mentioned above, the moduli space of instanton bundles of minimal charge is birational to the intermediate Jacobian $J(Y_3)$, so the birational map (\ref{instanton moduli= prym}) gives a modular interpretation of the classical isomorphism $J(Y_3)\cong \Prym(\td{\Delta},\Delta)$ in terms of instanton bundles of minimal charge and twisted Higgs bundles via $\m B_0$-modules. From this viewpoint, the embedding functor $\Xi_3: \m Ku(Y_3)\hookrightarrow \D^b(\P^2,\m B_0)$ provides a reinterpretation of the  classical isomorphism $J(Y_3)\cong \Prym(\td{\Delta},\Delta)$. 

Philosophically, the result allows us to think of $D^b(\P^2,\m B_0)$ as the categorical counterpart of $\Prym(\td{\Delta},\Delta)$ associated to a conic bundle, just as $\m Ku(Y_3)$ is the categorical counterpart of $J(Y_3).$

\subsection{Convention}
Throughout this paper we work over the complex numbers $\C$. All modules in this paper are assumed to be left modules. For a morphism $f: X\to Y$ of two spaces (schemes or stacks) and a subspace $Z\subset Y$, we will denote by $X|_Z: = X\times_Y Z$ the fiber product and $f|_Z: X|_Z\to Z$.

\subsection{Acknowledgement}
This paper is written as part of my Ph.D. thesis at the University of Pennsylvania. I would like to thank both my advisors Ron Donagi and Tony Pantev for their constant help, many discussions and encouragement. I would also like to thank Emanuele Macr\`{i} for many useful discussions and Alexander Kuznetsov for all the useful and detailed comments on the earlier drafts of this paper. I would also like to thank the reviewers for their careful reading and insightful comments. 
\section{Special subvarieties in Prym varieties}
In this section, we recall the special subvariety construction of Prym varieties following the work of Welters \cite{Welters81AJ_Isogenies} and Beauville \cite{beauville1982sous}. Let $\pi: \ts\to \sm$ be an \'{e}tale double cover of two smooth curves. Then we denote by $\Nm:J\ts\to J\sm$ the norm map on the Jacobians of curves. We also have the induced map $\Nm^d: J^d\ts\to J^d\sm $ where $J^d\td{\sm}$ and $J^d\sm$ are the varieties of degree $d$ invertible sheaves on $\td{\sm}$ and $\sm$ respectively. We recall that the Prym variety is defined to be $\Prym(\td{\sm},\sm):= \ker(\Nm)^\circ$ i.e. the connected component of the kernel of the norm map. . 

Suppose $g^r_d$ is a linear system of degree $d$ and (projective) dimension $r$ on $\sm$. Consider the Abel-Jacobi maps: 
\begin{align*}
        \td{\a}: \ts^{(d)}\to J^d\ts ,& \quad \td{x}_1 + ... +\td{x}_d \mapsto \O(\td{x}_1 + ...+ \td{x}_d )\\
        \a: \sm^{(d)}\to J^d\sm,& \quad x_1+...+x_d\mapsto \O(x_1+...+x_d)
\end{align*} 
they fit in the following commutative diagram
\begin{equation}
    \begin{tikzcd}
    \ts^{(d)}\arrow[r,"\td{\a}"]\arrow[d,"\pi^{(d)}"]&J^d\ts\arrow[d,"\Nm^d"]\\
    \sm^{(d)}\arrow[r,"\a"]& J^d\sm
    \end{tikzcd}
\end{equation}
The linear system $g^r_d\cong \P^r$ is naturally a subvariety of $\sm^{(d)}$. We assume that the linear system $g^r_d$ contains a reduced divisor, so that $g^r_d$ is not contained in the branch locus of $\pi^{(d)}$.

Now, we define $W = (\pi^{(d)})^{-1}(g^r_d)$ as the preimage of $g^r_d$ in $\ts^{(d)}$. The image of $W$ under $\td{\a}$ is denoted by $V= \td{\a}(W)$. The inverse image $(\Nm^d)^{-1}(\a(g^r_d))$ consists of two disjoint components, each of which is isomorphic to the Prym variety $\Prym(\td{\sm},\sm)$ by translation, and we denote them by $\Pr^0$ and $\Pr^1$. By construction, we have that $V\subset (\Nm^d)^{-1}(\a(g^r_d))$, so $V$ also has two disjoint components $V^i\subset \Pr^i$ for $i=0,1$. Hence, $W$ also breaks into a disjoint union of two subvarieties $W^0$ and $W^1$ such that $\td{\a} (W^i) = V^i$. It is proved in \cite[Proposition 8.8]{Welters81AJ_Isogenies} that $W^0$ and $W^1$ are the connected components of $W.$ Welters \cite{Welters81AJ_Isogenies} called $W$ the variety of divisors on $\td{\sm}$ lying over the $g^r_d$ and the two connected components $W^0$ and $W^1$ the \emph{halves} of the variety of divisors $W$. The subvarieties $V^i$ are called the \emph{special subvarieties} of $\Pr^i$ associated to the linear system $g^r_d.$ 

\begin{rmk}\label{involution by even card subsets}
Let $\sigma:\td{\sm}\to \td{\sm}$ be the involution on $\td{\sm}.$ By \cite[Lemma 1]{mumford71theta}, a line bundle $L\in\ker (\Nm)$ can always be written as $L\cong M\otimes \sigma^* (M^\vee)$ such that if $\deg(M)\equiv 0$ (resp. $1)$ mod $2$, then $L\in \Pr^0$ (resp. $\Pr^1$). It follows that if $L\in \Pr^i$, then $L\otimes \O(x -\sigma (x))\in \Pr^{1-i}$ where $x\in \sm$. This implies that if $x_1+... + x_d\in W^i$, then the divisor $\sigma(x_1)+... + x_d = (\sigma(x_1) - x_1) + (x_1+...+x_d)  $ is contained in $W^{1-i}$. In particular, we see that if we involute an even number of points $x_k$ in $x_1+...+x_d\in W^i$, then the resulting divisor lies in the same component, i.e. $\sum_{k\in I} \sigma(x_k)+\sum_{j\not \in I}x_j\in W^{1-i} $ if $x_1+...+x_d\in W^{i}$ and $I$ has even cardinality. 
\end{rmk}

Let $\overline{\sm}:= \Z/2\Z\times \sm$ be the constant group scheme over $\sm$. The trivial double cover $p:\overline{\sm}\to \sm$ also induces a morphism on its $d$-th symmetric products $\overline{\sm}^{(d)}\to \sm^{(d)}$. Let $U\subset \sm^{(d)}$ be the open subset of reduced effective divisors. 
\begin{prop}\label{group scheme}
The scheme $G' :=\overline{\sm}^{(d)}|_U$ is a group scheme over $U$. 
\end{prop}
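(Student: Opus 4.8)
The plan is to identify $G' = \overline{\sm}^{(d)}|_U$ with a Weil restriction of the constant group scheme $\Z/2\Z$ along the universal reduced divisor over $U$, and then to quote the standard fact that Weil restriction along a finite locally free morphism carries group schemes to group schemes.

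First I would unwind the source object. Since $\overline{\sm} = \Z/2\Z\times\sm$ is a disjoint union of two copies of $\sm$, its $d$-th symmetric product splits as
\begin{equation*}
\overline{\sm}^{(d)} = \coprod_{i+j=d}\sm^{(i)}\times\sm^{(j)},
\end{equation*}
and under this splitting the morphism $q\colon\overline{\sm}^{(d)}\to\sm^{(d)}$ induced by $p$ is $(A,B)\mapsto A+B$ on the $(i,j)$-th summand. Hence $G' = q^{-1}(U)$ is open in $\overline{\sm}^{(d)}$, and --- viewing $\sm^{(d)}$ as the Hilbert scheme of length-$d$ subschemes of $\sm$ --- a $T$-valued point of $G'$ is a pair $(D_0,D_1)$ of relative effective Cartier divisors in $\sm\times T$ over $T$ whose sum $D \defeq D_0 + D_1$ is reduced, i.e.\ finite étale of degree $d$ over $T$ (this is exactly the condition that the classifying map $T\to\sm^{(d)}$ land in $U$).

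Next I would show that over the reduced locus such a pair is the same datum as a partition of $D$ into two open-and-closed subschemes: étale-locally on $T$, and then on $D$, the divisor $D$ is a disjoint union of $d$ distinct sections of $\sm\times T$, each occurring in $D$ with multiplicity one and hence lying in exactly one of $D_0, D_1$, so $D_0$ and $D_1$ are complementary open-closed subschemes of $D$; these local decompositions glue. Writing $\rho\colon Z\to U$ for the universal divisor over $U$ (finite étale of degree $d$), this identifies the functor of points of $G'$ over $U$ with $T\mapsto\{\text{open-closed decompositions of }Z\times_U T\} = \operatorname{Mor}_T(Z\times_U T,\Z/2\Z)$; in other words $G'\cong\operatorname{Res}_{Z/U}(\Z/2\Z)$, the Weil restriction of the constant group scheme $\Z/2\Z$ along $\rho$. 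By the standard theory of Weil restriction along finite locally free morphisms (see e.g.\ Bosch--L\"utkebohmert--Raynaud, \emph{N\'eron Models}, \S7.6, or SGA~3, Exp.~VIII), $\operatorname{Res}_{Z/U}(\Z/2\Z)$ is represented by a $U$-scheme --- in fact one finite étale over $U$ of degree $2^d$ --- and inherits a group scheme structure from $\Z/2\Z$, with multiplication given by pointwise addition of the associated $\Z/2\Z$-valued functions on $Z$ (equivalently, symmetric difference of decompositions), identity section $[D]\mapsto(D,0)$, and inversion equal to the identity. This proves the proposition.

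The only real obstacle is the middle step --- making precise, in families over $U$, that a lift of an effective divisor to $\overline{\sm}$ amounts to a $\Z/2\Z$-coloring of its points; here the hypothesis that we have restricted to the reduced locus $U$ is essential (over a non-reduced divisor the lifts of a point of multiplicity $\geq 2$ no longer form a group). Everything else is either a formal decomposition of symmetric products or a standard citation. Alternatively, one can bypass Weil restriction altogether and simply note that the functor of open-closed decompositions of $Z$ is étale-locally on $U$ the constant functor $(\Z/2\Z)^d$, hence represented by a finite étale group scheme over $U$ which is a twisted form of $(\Z/2\Z)^d$.
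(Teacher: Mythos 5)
Your proposal is correct, but it takes a genuinely different route from the paper. The paper never computes the functor of points: it builds the group law directly on symmetric products, using the maps $m^{(d)}$ and $pr_j^{(d)}$ induced by the group structure of $\overline{\sm}$ over $\sm$, and identifies $G'\times_U G'$ with $(p^{(d)}\circ pr_1^{(d)})^{-1}(U)$ by the elementary criterion ``bijective on closed points onto a normal variety, hence an isomorphism'' (the same device it reuses for Proposition \ref{pseudo torsor}); identity and inverse are then the obvious sections. You instead unwind $\overline{\sm}^{(d)}=\coprod_{i+j=d}\sm^{(i)}\times\sm^{(j)}$, translate a $T$-point of $G'$ into a pair of relative Cartier divisors with reduced (finite \'etale) sum, show such a pair is the same as an open-closed decomposition of the pulled-back universal divisor $Z\times_U T$, and conclude $G'\cong\operatorname{Res}_{Z/U}(\Z/2\Z)$, importing representability and the group structure from the general theory of Weil restriction along finite locally free morphisms. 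Your key step (sub-divisors of a relative reduced divisor are open-closed, \'etale-locally a choice of a subset of the $d$ disjoint sections) is exactly right and is where the hypothesis of restricting to $U$ enters, just as the paper's bijectivity-on-closed-points step does. What the two approaches buy: the paper's argument is short, self-contained and stays at the level of varieties, at the cost of leaning on smoothness/normality and closed points (so it is tied to the reduced, characteristic-zero setting); yours is functorial, identifies $G'$ intrinsically as a finite \'etale group scheme of degree $2^d$ (a twisted form of $(\Z/2\Z)^d$), makes the group law transparent on $T$-points, and would generalize, at the cost of the Hilbert-scheme translation and an external citation (BLR 7.6 / SGA 3) --- though, as you note, the final citation can be avoided by observing the functor is \'etale-locally constant.
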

\begin{proof}
Note that the map $G'\to U$ is \'{e}tale. The multiplication map $m:\overline{\sm}\times _\sm \overline{\sm} \to \overline{\sm}$ induces the map $m^{(d)}:(\overline{\sm}\times _\sm \overline{\sm})^{(d)} \to \overline{\sm}^{(d)}$. On the other hand, the natural projections $pr_j:\overline{\sm}\times_\sm \overline{\sm}\to \overline{\sm}$ induces the maps $pr_j^{(d)}:(\overline{\sm}\times_\sm \overline{\sm})^{(d)}\to \overline{\sm}^{(d)}$ and so $r:(p^{(d)} \circ pr_{1}^{(d)})^{-1}(U) \to G'\times_{U} G'$ by universal property. It is easy to see that $r$ is bijective on closed points. As $G'$ and $U$ are smooth, so $G'\times_{U} G'$ is also smooth and hence normal. Therefore, $r$ is an isomorphism. Then we define the multiplication map on $G'$ to be 
\begin{equation*}
    G'\times_UG' \xrightarrow{r^{-1}}(p^{(d)} \circ pr_{1}^{(d)})^{-1}(U) \xrightarrow{m^{(d)}|_U} G'
\end{equation*}
At closed points, the group multiplication $G'(\C)\times_{U(\C)}G'(\C)\to G'(\C) $ simply sends
\begin{equation*}
    \left(\sum_{i=1}^d(\a_i,x_i), \sum_{i=1}^d(\b_i,x_i)\right) \to \sum_{i=1}^d(\a_i+\b_i,x_i)
\end{equation*}
over $\sum_{i=1}^d x_i\in U(\C)$, where $\a_i,\b_i\in \Z/2\Z$.

The trivial double cover $\overline{\sm}\to \sm$ always has a section $\sm\to \overline{\sm}$ mapping to $q^{-1}(0)$ where $q:\overline{\sm}\to \Z/2\Z$ is the projection map. The identity map is defined as the restriction of $\sm^{(d)}\to\overline{\sm}^{(d)} $ to $U$, i.e. $e:U\to G'$. 

The inverse map is simply the identity map $\iota: G'\to G'$. 
\end{proof}

The projection map $q:\overline{\sm}\to \Z/2\Z$ induces $G'\to \overline{\sm}^{(d)}\xrightarrow{q^{(d)}} (\Z/2\Z)^{(d)}$ and there is the summation map $\Z/2\Z^{(d)}\to \Z/2\Z$, and we denote by $s: G' \to \Z/2\Z $ the composition of the two maps. Then we define the preimage $G:= s^{-1}(0)$. 

\begin{cor}
The scheme $G$ is a group scheme over $U$.
\end{cor}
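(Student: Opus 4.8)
The plan is to realize $G$ as the kernel of a homomorphism of group schemes over $U$ and then invoke the general fact that such a kernel is again a group scheme over the base. By Proposition~\ref{group scheme}, $G'$ is a group scheme over $U$, with multiplication given on $\C$-points by componentwise addition of the $\Z/2\Z$-labels over a fixed reduced divisor. Viewing $\Z/2\Z$ as the constant group scheme $(\Z/2\Z)_U$ over $U$, the morphism $s\colon G'\to(\Z/2\Z)_U$ is a morphism of $U$-schemes, and the first step is to check that it is a homomorphism of $U$-group schemes, i.e.\ that $s\circ m=\mu\circ(s\times s)$ (with $\mu$ the addition on $\Z/2\Z$) and $s\circ e=0$.

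To verify this, I would reduce to a statement about closed points. Since $G'$ is smooth over $\C$ (it is \'etale over the smooth variety $U$), so is $G'\times_U G'$ by the argument already used in the proof of Proposition~\ref{group scheme}; hence both sides of $s\circ m=\mu\circ(s\times s)$ are morphisms from a reduced finite-type $\C$-scheme to the separated scheme $(\Z/2\Z)_U$, so it suffices that they agree on the dense set of closed points. On $\C$-points this is immediate: $s$ sends $\sum_{i=1}^d(\a_i,x_i)$ to $\sum_{i=1}^d\a_i$, the product of $\sum_{i=1}^d(\a_i,x_i)$ and $\sum_{i=1}^d(\b_i,x_i)$ is $\sum_{i=1}^d(\a_i+\b_i,x_i)$, and the identity section has all labels $0$; hence $s$ is additive and sends the identity to $0$.

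Once $s$ is known to be a homomorphism of group schemes over $U$, the scheme $G=s^{-1}(0)$ is exactly its kernel $G'\times_{(\Z/2\Z)_U}U$ (pullback along the zero section), so the structure maps of $G'$ restrict to make $G$ a group scheme over $U$: $e$ factors through $G$, and $m$, $\iota$ carry $G\times_U G$ and $G$ into $G$ because $\{0\}$ is a subgroup of $\Z/2\Z$. Since $\{0\}$ is a clopen point of $(\Z/2\Z)_U$, the scheme $G$ is in fact both open and closed in $G'$ and inherits smoothness and \'etaleness over $U$. I do not anticipate a real obstacle here; the only point requiring a word of care is the reduction from equality on closed points to equality of morphisms, which is covered by reducedness of $G'\times_U G'$ together with separatedness of $(\Z/2\Z)_U$.
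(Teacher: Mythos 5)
Your argument is correct and is essentially the intended one: the paper states this corollary without proof, treating it as immediate from Proposition \ref{group scheme}, and your realization of $G$ as the kernel of the homomorphism $s\colon G'\to(\Z/2\Z)_U$ (with the homomorphism property checked on closed points via reducedness of $G'\times_U G'$ and separatedness of the target) is exactly the natural way to fill in the details. No gaps.
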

We can denote a closed point of $G$ as $\sum (\lambda_i, x_i) $ such that $\sum \lambda_i=0$ in $\Z/2\Z$ where $\lambda_i\in \Z/2\Z$ and $x_i\in \sm$. In other words, $G$ is the group $U$-scheme of even cardinality subsets of reduced divisors in $\sm$.

\begin{prop}\label{pseudo torsor}
Let $g^r_d$ be a linear system and consider the half $W^i\subset \td{\sm}^{(d)}$ of the variety of divisors $W$ lying over $g^r_d$. If we denote by $U_0:= U\cap g^r_d$ and $G_0: =G|_{U_0}$, then there is a $G_0$-action $\mu$ on $W^i|_{U_0}$ over $U_0$, making it a pseudo $G_0$-torsor on $U_0$ i.e. the induced morphism $(\mu,pr_2) :G_0\times_{U_0} W^i|_{U_0}\to  W^i|_{U_0}\times_{U_0} W^i|_{U_0}$ is an isomorphism.
\end{prop}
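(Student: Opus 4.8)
The plan is to first produce a simply transitive action of the \emph{larger} group scheme $\overline{\Sigma}^{(d)}|_{U_0}$ on $W|_{U_0}$, and then cut it down to the even part $G_0$ using the component bookkeeping of Remark~\ref{involution by even card subsets}. Since $g^r_d$ contains a reduced divisor, $U_0=U\cap g^r_d$ is a nonempty open subscheme of $g^r_d$, and over the open locus of reduced divisors the finite map $\pi^{(d)}$ is étale (both symmetric products being quotients of the étale cover $\widetilde{\Sigma}^{d}\to\Sigma^{d}$ by the free $S_d$-action over the distinct-points locus); hence $W|_{U_0}=\widetilde{\Sigma}^{(d)}\times_{\Sigma^{(d)}}U_0$ is étale, in particular smooth, over $U_0$. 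The involution $\sigma$ makes $\widetilde{\Sigma}$ an $\overline{\Sigma}$-scheme over $\Sigma$, with the nontrivial element of $\Z/2\Z$ acting by $\sigma$, and I would run the symmetric-product construction in the proof of Proposition~\ref{group scheme} on the fibre product $\overline{\Sigma}\times_\Sigma\widetilde{\Sigma}$ to obtain an action morphism
\[
    \overline{\mu}\colon \overline{\Sigma}^{(d)}|_{U_0}\times_{U_0}W|_{U_0}\longrightarrow W|_{U_0}
\]
which, on closed points over a reduced divisor $D=x_1+\dots+x_d$, sends $\left((\e_i)_i,\ \sum_i \widetilde{y}_i\right)$ with $\pi(\widetilde{y}_i)=x_i$ to $\sum_i\sigma^{\e_i}(\widetilde{y}_i)$. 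As the fibre of $W|_{U_0}$ over $D$ is precisely the $(\Z/2\Z)^d$-torsor $\prod_i\pi^{-1}(x_i)$, the map $(\overline{\mu},pr_2)$ is bijective on closed points; both source and target being smooth over $U_0$, it is an isomorphism, so $W|_{U_0}$ is a pseudo $\overline{\Sigma}^{(d)}|_{U_0}$-torsor over $U_0$.

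Next I would restrict to $G_0$. Under the description above, the point $(\e_i)_i$ acts on $\sum_i\widetilde{y}_i$ by involuting the points indexed by $I=\{i:\e_i=1\}$, and the condition $\sum_i\e_i=0$ in $\Z/2\Z$ cutting out $G$ says exactly that $|I|$ is even; by Remark~\ref{involution by even card subsets} such an involution preserves each half, so $\overline{\mu}$ restricts to an action $\mu$ of $G_0=G|_{U_0}$ on each $W^i|_{U_0}$. Since $G_0$ is open and closed in $\overline{\Sigma}^{(d)}|_{U_0}$ and each $W^i$ is a connected component of $W$, the subscheme $G_0\times_{U_0}W^i|_{U_0}$ is open and closed in $\overline{\Sigma}^{(d)}|_{U_0}\times_{U_0}W|_{U_0}$, so the isomorphism $(\overline{\mu},pr_2)$ restricts to an isomorphism onto its open-and-closed image, and the only remaining task is to identify that image with $W^i|_{U_0}\times_{U_0}W^i|_{U_0}$. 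The inclusion ``$\subseteq$'' is immediate from Remark~\ref{involution by even card subsets} (both $\mu(g,w)$ and $pr_2(g,w)=w$ lie in $W^i|_{U_0}$ when $g\in G_0$); for ``$\supseteq$'', given $v,w\in W^i|_{U_0}$ over a common point of $U_0$, the unique $g\in\overline{\Sigma}^{(d)}|_{U_0}$ with $\overline{\mu}(g,w)=v$ must have even cardinality by Remark~\ref{involution by even card subsets}. This yields the desired isomorphism $(\mu,pr_2)\colon G_0\times_{U_0}W^i|_{U_0}\xrightarrow{\ \sim\ }W^i|_{U_0}\times_{U_0}W^i|_{U_0}$.

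The main obstacle is the first step: turning the set-theoretic ``flip the sheets'' operation into an honest morphism of schemes and verifying that $(\overline{\mu},pr_2)$ is an isomorphism. This is the same kind of issue resolved in Proposition~\ref{group scheme}, handled by passing to symmetric products of $\overline{\Sigma}\times_\Sigma\widetilde{\Sigma}$ and using that over the reduced locus all the covers in sight are étale (so smooth and normal) with the comparison maps bijective on closed points; alternatively, one may base change along an étale cover of $U_0$ trivializing the $(\Z/2\Z)^d$-torsor $W|_{U_0}\to U_0$, after which the pseudo-torsor statement for $\overline{\Sigma}^{(d)}|_{U_0}$ becomes trivial and descends back to $U_0$. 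Everything after that is formal manipulation of the two halves via Remark~\ref{involution by even card subsets}, so I do not expect further difficulty.
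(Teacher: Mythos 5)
Your proposal is correct and follows essentially the same route as the paper: the action morphism is built exactly as in Proposition~\ref{group scheme} from symmetric products of $\overline{\sm}\times_\sm\td{\sm}$, the identification with a pseudo-torsor is checked on closed points and upgraded using smoothness/normality (your use of \'etaleness over the reduced locus even makes this step slightly cleaner), and Remark~\ref{involution by even card subsets} is used to cut down to $G_0$ and the half $W^i$. The only cosmetic difference is that you first establish the full $G'$-pseudo-torsor structure on $W|_{U_0}$ and then restrict, whereas the paper restricts the action first and verifies simple transitivity directly on $W^i|_{U_0}$.
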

\begin{proof}
To simplify notation we denote $\td{U}:= \td{\sm}^{(d)}|_U$. The construction of the group action by $G_0$ is similar to the multiplication map defined in Proposition \ref{group scheme}. We first define a group action $G'\times_U \td{U} \to \td{U}$. The involution action $\sigma: \overline{\sm}\times_{\sm}\td{\sm}\to \td{\sm}$ induces the map $\sigma^{(d)}:(\overline{\sm}\times_{\sm}\td{\sm})^{(d)}\to \td{\sm}^{(d)}$. The natural projection $pr_1: \overline{\sm}\times_\sm \td{\sm}\to \overline{\sm}$ induces the map $pr_1^{(d)}:(\overline{\sm}\times_{\sm}\td{\sm})^{(d)}\to \td{\sm}^{(d)}$. Then we get by universal property $t: (p^{(d)}\circ pr_1^{(d)})^{-1}(U)\to G'\times _U \td{U}$. Again, we can easily check that the map $t$ is bijective on closed points and $G'\times _U \td{U}$ is smooth and hence normal, the map $t$ is an isomorphism. We define the group action as 
\begin{equation*}
    G'\times _U \td{U} \xrightarrow{t^{-1}} (p^{(d)}\circ pr_1^{(d)})^{-1}(U) \xrightarrow{\sigma^{(d)}|_U}  \td{U}
\end{equation*}

This defines another group action by restricting to $G\subset G'$. Finally, by Remark \ref{involution by even card subsets}, we see that the restriction of the group action by $G$ to $G_0$ defines a group action 
\begin{equation*}
    \mu: G_0\times_ {U_0} W^i|_{U_0}\to W^i|_{U_0}
\end{equation*}
At closed points, the group action $\mu(\C):G_0(\C)\times_ {U_0(\C)} (W^i|_{U_0})(\C)\to (W^i|_{U_0})(\C)$ simply sends
\begin{equation*}
    \left(\sum^d_{j=1}(\lambda_j,x_j), \sum_{j=1}^d (w_j,x_j)  \right) \mapsto \sum_{j=1}^d (\sigma^{\lambda_j}(w_j),x_j)
\end{equation*}
over $\sum_{j=1}^dx_j\in U_0(\C)$, 
where $\sum_{j=1}^d (w_j,x_j) \in (W^i|_{U_0})(\C)$ with $\pi(w_j)=x_j$ and we denote by $\sigma^0= Id, \sigma^1=\sigma$. 

Also, it is clear that $\mu$ is simply transitive on closed points. Then it follows by the normality of $ W^i|_{U_0}\times_{U_0} W^i|_{U_0}$ that the induced morphism $(\mu,pr_2): G_0\times_{U_0} W^i|_{U_0}\to  W^i|_{U_0}\times_{U_0} W^i|_{U_0}$ is an isomorphism.
\end{proof}

\begin{example}\label{linear system of canonical divisors}
Consider the linear system $|K_\sm|$ i.e. the linear system of canonical divisors. In this case, we have $d= 2g-2$ and $r=g-1$. Observe that $\textrm{dim}(W) = \textrm{dim}(\Pr^i)$ and the fiber of the morphisms $W^i \to \Pr^i$ at a point $[D]\in \Pr^i$ is $|D|.$ It can be shown (e.g. \cite[Section 6]{mumford74prym}, \cite{mumford71theta}) that: 
\begin{enumerate}
    \item $\td{\a}|_{W^1}: W^1 \to \Pr^1$ is birational.
    \item $\td{\a}|_{W^0}: W^0\to \Pr^0$ maps onto a divisor $\Theta \subset \Pr^0$ and is generically a $\P^1$-bundle. 
\end{enumerate}
\end{example}

\section{Modules over the sheaf of even Clifford algebras  }
\subsection{Conic fibrations and sheaves of even Clifford algebras }\label{conic fibrations}
For simplicity, we will only discuss conic fibrations i.e. flat quadric fibrations of relative dimension 1. A conic fibration $\pi:Q\to S$ over a smooth variety $S$ is defined by a rank 3 vector bundle $F$ on $S$, together with an embedding of a line bundle $q: L\to S^2F^\vee$ which is also thought of as a section in $S^2F^\vee\otimes L^\vee$. Then $Q$ is embedded in $\P(F) = \textrm{Proj}(\bigoplus_{k\geq 0} S^kF^\vee)$ as the zero locus of $q\in H^0(S,S^2 F^\vee\otimes L^\vee)  = H^0(\P(F), \O_{\P(F)/S}(2)\otimes (\pi')^*L^\vee)$ where we denote by $\pi':\P(F)\to S$ the projection morphism. The morphism $\pi:Q\to S$ obtained by restricting $\pi'$ to $Q$ is flat as $q:L\to S^2F^\vee$ is assumed to be an embedding and $S$ is smooth.

Given a conic fibration $\pi:Q\to S$, we define the sheaf of even Clifford algebras by following the approach of \cite{auel14quadrics}\footnote{Note that we write a line bundle-valued quadratic form as $q: L\to S^2F^\vee$ where the authors in \cite{auel14quadrics} write it as $L^\vee\to S^2F^\vee$.}. Note that $q:L\to S^2F^\vee$ induces an $\O_S$-bilinear map $q: F\times F\to L^\vee$ (again denoted by $q$). Then we can consider the two ideals $J_1$ and $J_2$ of the tensor algebra $T^\bullet (F\otimes F\otimes L )$ which are generated by
\begin{equation}
    v\otimes v \otimes f - \langle q(v,v),f\rangle , \quad u\otimes v\otimes f\otimes v\otimes w\otimes g- \langle q(v,v),f\rangle u\otimes w\otimes g
\end{equation}
respectively, where the sections $u,v,w\in F$ and $ f,g\in L$, and $\langle \cdot, \cdot \rangle: L^\vee\otimes L\to \O_S$ is the natural map. Then the even Clifford algebra is defined as the quotient algebra
\begin{equation}
    \m B_0:= T^\bullet (F\otimes F\otimes L ) /(J_1+J_2).
\end{equation}
The sheaf of algebra has naturally a filtration 
\begin{equation}
    \O_S= F_0 \subset F_1 = \m B_0
\end{equation}
obtained as the image of the truncation of the tensor algebra $T^{\leq i}(F\otimes F \otimes L) $ in $\m B_0$. Moreover, the associated graded piece $F_1/F_0 \cong \wedge^2 F \otimes L$. As an $\O_S$-module, we actually have $\m B_0\cong \O_S \oplus (\wedge^2F\otimes L)$ which can be seen by defining the splitting $\wedge^2F \otimes L \to F\otimes F\otimes L \to T^\bullet (F\otimes F\otimes L)/(J_1+J_2)$  where $\wedge^2 F$ is thought of as a subbundle of antisymmetric 2-tensors of $F\otimes F$. 

Now, given a conic fibration $\pi:Q\to S$ and its associated sheaf of even Clifford algebras $\m B_0$ as defined above, a $\m B_0$-module is a coherent sheaf on $S$ with a left $\m B_0$-module structure. We denote by $\Coh(S, \m B_0)$ the abelian category of $\m B_0$-modules on $S$. 

\subsection{Root stacks}
The main objects in this paper are $\m B_0$-modules. In order to study the category of $\m B_0$-modules, it is easier to work with a root stack cover of $S$. The advantage is that the category of $\m B_0$-modules is equivalent to the category of modules over a sheaf of Azumaya algebras on the root stack. For more details about root stacks, we refer the readers to \cite{Cadman07rootstack}. 

Let $\m L$ be a line bundle on a scheme $X$ and $s\in H^0(X,\m L)$ and $r$ a positive integer. The pair $(\m L,s)$ defines a morphism $X\to \left[ \bb A^1/\bb G_m\right]$, and the $r$-th power maps on $\bb A^1$ and $\bb G_m$ induce a morphism $\theta_r : \left[ \bb A^1/\bb G_m\right]\to \left[ \bb A^1/\bb G_m\right]$. Following \cite{Cadman07rootstack}, we define the $r$-th root stack $X_{\m{L},s,r}$ as the fiber product 
\begin{equation*}
    X\times_{ \left[ \bb A^1/\bb G_m\right],\theta_r} \left[ \bb A^1/\bb G_m\right].    
\end{equation*}

The $r$-th root stack $X_{\m L,s,r}$ is a Deligne-Mumford stack. Locally on $X$, when $\m L$ is trivial, $X_{\m L,s,r}$ is just the quotient stack $\left[ \Spec\left(\O_X[t]/(t^r-s)\right)/\mu_r\right]$ where $\mu_r$ is the group of $r$-th roots of unity acting on $t$ by scalar action. The root stack $X_{\m L,s,r}$ has $X$ as its coarse moduli space. There is a tautological sheaf $\m T$ on $X_{\m L,s,r}$ satisfying $\m T^{\otimes r}\cong \psi^*\m L$ where $\psi: X_{\m L,s,r} \to X$ is the projection. When the zero locus of $s$ is connected, every line bundle on $X_{\m L,s,r}$ is isomorphic to $\psi^*G\otimes \m T^{\otimes k}$ where $k\in\{0,...,r-1\}$ is unique and $G$ is unique up to isomorphism \cite[Corollary 3.1.2]{Cadman07rootstack}. For our purposes, we will mainly consider the case $\m L=\O_X(D)$ for an effective Cartier divisor $D$ and $s=s_D$ the section vanishing at $D.$ In this case, we will simply write $X_{\O_X(D), s_D,r} = X_{D,r}$ and the tautological sheaf $\m T$ as $\O(\frac{D}{r})$.

Similarly, it is pointed out in \cite[Lemma 2.1.1]{Cadman07rootstack} that there is an equivalence of categories between the category of morphisms $X\to \left[\bb A^n/\bb G_m^n\right] $ and the category whose objects are $n$-tuples $(\m L_i,t_i)^n_{i=1}$, where $\m L_i$ is a line bundle on $X$ and $t_i\in H^0(X,\m L_i)$, and whose morphisms $(\m L_i,t_i)^n_{i=1}\to (\m L'_i,t'_i)^n_{i=1}$ are $n$-tuples $(\varphi_i)^n_{i=1}$ where $\varphi_i:\m L_i\to \m L_i'$ is an isomorphism such that $\varphi_i(t_i)=t'_i$. If we let $\bb D:= (D_1,...,D_n)$ be an $n$-tuple of effective Cartier divisors and $\vec{r} = (r_1,...,r_n)$, then the $n$-tuples $(\O_X(D_i),s_{D_i})^n_{i=1}$ will determine a morphism $X\to \left[\bb A^n/\bb G_m^n\right].$
Also, the morphisms on $\bb A^n$ and $\bb G^n_m$ sending $(x_1,...,x_n)\mapsto (x_1^{r_1},...,x_n^{r_n})$ induce a morphism $\theta_{\vec{r}}:\left[\bb A^n/\bb G_m\right]\to \left[\bb A^n/\bb G_m\right]$. We define $X_{\bb D, \vec{r}}$ as the fiber product
\begin{equation*}
    X\times _{ \left[ \bb A^n/\bb G^n_m\right],\theta_{\vec{r}}} \left[ \bb A^n/\bb G^n_m\right].
\end{equation*}

This can be interpreted as iterating the $r$-th root stack construction for $n=1$. There are the tautological sheaves $\O\left(\frac{D_i}{r_i}\right)$ on $X_{\bb D,\vec{r}}$ satisfying $\O\left(\frac{D_i}{r_i}\right)^{\otimes r_i}\cong \psi^*\O_X(D_i)$. Every line bundle $F$ on $X_{\bb D,\vec{r}}$ can be written as 
\begin{equation*}
     F \cong \psi^* G \otimes \prod^r _{i=1} \m \O\left(\frac{D_i}{r_i}\right)^{\otimes k_i}
\end{equation*}
where $0\leq k_i \leq r_i-1$ are unique, and $G$ is unique up to isomorphism and $\psi:X_{\bb D,\vec{r}}\to X $ is the projection \cite[Corollary 3.2.1]{Cadman07rootstack}.

\begin{lem} \label{iterated_root_stacks}
Let $D= D_1+ ... +D_n$ where $D_i$ are pairwise disjoint effective Cartier divisors. If $r= r_1 =... = r_n$, we have 
\begin{equation*}
    X_{\bb D,\vec{r}}\xrightarrow{\sim} X_{D,r}.
\end{equation*}
\end{lem}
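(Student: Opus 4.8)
The plan is to realize both sides as fiber products over the same base stack and then invoke the universal property. The pair $(\O_X(D), s_D)$ with exponent $r$ defines a morphism $X \to [\bb A^1/\bb G_m]$, and since $D = D_1 + \cdots + D_n$, the section $s_D$ factors as the product $s_{D_1}\cdots s_{D_n}$; this corresponds to the composition of $X \xrightarrow{(\O_X(D_i), s_{D_i})_i} [\bb A^n/\bb G_m^n]$ with the multiplication map $\bb A^n \to \bb A^1$, $(x_1,\dots,x_n)\mapsto \prod x_i$, which is equivariant for the product map $\bb G_m^n \to \bb G_m$. So the first step is to set up this commutative square of maps of quotient stacks and observe that $X_{D,r}$ is the pullback of $\theta_r$ along $X \to [\bb A^1/\bb G_m]$, while $X_{\bb D, \vec r}$ is the pullback of $\theta_{\vec r}$ along $X \to [\bb A^n/\bb G_m^n]$.

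Next I would produce a canonical morphism $X_{\bb D,\vec r} \to X_{D,r}$. The iterated root stack $X_{\bb D,\vec r}$ carries tautological sheaves $\O(D_i/r_i)$ with $\O(D_i/r_i)^{\otimes r} \cong \psi^*\O_X(D_i)$ together with sections $\psi^* s_{D_i}$; tensoring them gives a line bundle $\bigotimes_i \O(D_i/r_i)$ whose $r$-th power is $\psi^*\O_X(D)$, equipped with the product of the pulled-back sections, whose $r$-th power is $\psi^* s_D$. By the universal property of the root stack $X_{D,r}$ (it is the moduli of $r$-th roots of $(\O_X(D), s_D)$), this data determines a morphism $X_{\bb D,\vec r} \to X_{D,r}$ over $X$. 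The remaining step is to check this morphism is an isomorphism.

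For that I would argue locally on $X$, using that the disjointness of the $D_i$ makes the claim local: pick an affine open $U = \Spec A$ small enough that each $\O_X(D_i)|_U$ is trivialized, say $D_i \cap U = V(f_i)$, and such that at most one $f_i$ is a non-unit at any given point (possible since the $D_i$ are pairwise disjoint). Over such $U$, $X_{D,r}|_U = [\Spec(A[t]/(t^r - \prod f_i))/\mu_r]$ and $X_{\bb D,\vec r}|_U = [\Spec(A[t_1,\dots,t_n]/(t_i^r - f_i))/\mu_r^n]$, and the comparison morphism sends $t \mapsto \prod t_i$. On the locus where all but one $f_j$ is a unit, one checks directly that the extra variables $t_i$ ($i\neq j$) are (up to roots of units, absorbed by the extra $\mu_r$ factors) redundant, giving an isomorphism of the local presentations compatible with the group actions; shrinking $U$ around each point of $X$ and gluing yields the global isomorphism. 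Alternatively, and perhaps more cleanly, one notes both stacks are flat and of the same relative dimension over $X$ (hence both are finite flat gerbe-like covers of degree $r^n$), and the comparison map is an isomorphism over the common open complement of $\bigcup D_i$, so it suffices to check it is finite flat of degree one over all of $X$, which again reduces to the one-variable-non-unit local computation above.

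The main obstacle is the bookkeeping in the local computation: precisely identifying the quotient presentations and tracking the $\mu_r^n$ versus $\mu_r$ actions under the map $t \mapsto \prod t_i$, in particular verifying that the "extra" $\mu_r$ factors act freely on the extra coordinates over the locus where the corresponding $f_i$ is a unit, so that the quotient collapses correctly. Everything else — the factorization of sections, the universal properties, the reduction to the local statement via disjointness of the $D_i$ — is formal.
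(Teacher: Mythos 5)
Your proposal is correct and takes essentially the same approach as the paper: define the comparison morphism $X_{\bb D,\vec r}\to X_{D,r}$ by tensoring the $r$-th roots (the paper does this on the functor of points, sending $(f,(N_i),(t_i),(\varphi_i))$ to $(f,\bigotimes_i N_i,\bigotimes_i t_i,\bigotimes_i \varphi_i)$), and then use the pairwise disjointness of the $D_i$ to check it is an isomorphism locally near each $D_i$, where the root constructions attached to the non-vanishing sections $s_{D_j}$, $j\neq i$, collapse. The only blemish is the parenthetical claim in your alternative argument that both stacks are finite flat of degree $r^n$ over $X$ (each has stacky degree one over $X$), but that aside is not needed for your main local computation.
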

\begin{proof}
An object of $X_{D,r}$ over a scheme $T$ consists of a quadruple $(f,N,t,\varphi)$ where $f:T\to X$ is a morphism, $N$ a line bundle, $t\in H^0(T,N)$ and $\varphi:N^{\otimes r} \xrightarrow{\sim} f^* \O(D)$ is an isomorphism such that $\varphi(t^{\otimes r})= f^*s$ and $s$ is the section of $\O_X(D)$ vanishing at $D$. 

On the other hand, an object of $X_{\bb{D},\vec{r}}$ consists of $(f ,(N_i)_{i=1}^n, (t_i)^n_{i=1},(\varphi_i)^n_{i=1})$ where $f:T\to X$ a morphism, $N_i$ is a line bundle, $t_i\in H^0(T,N_i)$ and $\varphi_i:N_i^{\otimes r_i} \xrightarrow{\sim} f_i^*\O(D_i)$ is an isomorphism such that $\varphi_i(t_i^{\otimes r_i})= f_i^*s_i$ and $s_i $ is the section of $\O_X(D_i)$ vanishing at $D_i$. We see that there is a natural morphism $\a: X_{\bb D,\vec{r}}\to X_{D,r}$ over $X$ sending 
\begin{equation}\label{mapsofrootstacks}
    (f ,(N_i)_{i=1}^n, (t_i)^n_{i=1},(\varphi_i)^n_{i=1}) \mapsto  (f, \bigotimes_{i=1}^n N_i, \bigotimes_{i=1}^n t_i, \bigotimes_{i=1}^n \varphi_i).
\end{equation}
To see that this is an isomorphism, we restrict to each open neighborhood $U_i$ of $D_i$ away from $D_j$ ($j\neq i$) such that $\O(D_j)|_{U_i}\xrightarrow{\sim}\O$ for $j\neq i$ and $\O(D_i)|_{U_i}\xrightarrow{\sim} \O(D_1+..+D_n)|_{U_i}$. Then it is clear that the functor (\ref{mapsofrootstacks}) over $U_i$ is essentially surjective i.e. the image of the quadruples $(f ,(N_i)_{i=1}^n, (t_i)^n_{i=1},(\varphi_i)^n_{i=1})$ where $N_j\cong \O$ and $t_j=1$ for $j\neq i$ is dense.

\end{proof} \label{equivariant module}

\begin{example}\label{root stack over affine scheme} (Affine scheme)
Let $X=\Spec(R)$, $L=\O_X$ and $s$ be a section of $\O_X$. Then $X_{L,s,r}\cong \left[\Spec R'/\mu_r\right]$, where $R'= R[t]/(t^r-s)$, and $\gamma\cdot t= \gamma^{-1}t$ and $\gamma\cdot a= a$ for $a\in R$ and $\gamma\in \mu_r$. A quasi-coherent sheaf on $\left[ \Spec R'/\mu_r\right]$ is a $R'$-module $M$ with a $\mu_r$-action on $M$ such that for $\gamma\in \mu_r, b\in R', m \in M$, we have 
\begin{equation*}
    \gamma \cdot (b\cdot m) = (\gamma \cdot b)\cdot (\gamma\cdot m).
\end{equation*}
As $\mu_r$ is diagonalizable, there is a $\Z/r\Z$-grading $M\cong M_0\oplus...\oplus M_{r-1}$ where $\gamma \cdot m_i = \gamma^{i}m_i$ for $m_i\in M_i$. Note that the components are indexed by the group of characters of $\mu_r$, which is $\Z/r\Z$. Similarly, $R' \cong R'_0\oplus...\oplus R'_{r-1}$ where $R'_0=R$. In particular, we see that $\gamma: M\to M$ is an $R$-module homomorphism, and so each $M_i$ is an $R$-module. 

\end{example}

\begin{example}\label{root stack = quotient stack} (Cyclic cover)
When there exists a line bundle $N$ such that $f:N^{\otimes r}\cong \m L $, we can take the cyclic cover for section $s$, defined as 
\begin{equation*}
    \phi: \td{X}: = \underline{\Spec}\left(\m A_X\right)\to X, \qquad \m A_X:= \O_X\oplus N^\vee\oplus... \oplus (N^\vee)^{r-1}
\end{equation*}
where the algebra structure of $\m A_X$ is given by the map $(N^\vee)^{\otimes r}\xrightarrow{f^\vee} (\m L)^\vee\xrightarrow{s^\vee}\O $. By \cite[Th\'{e}or\`{e}me 3.4]{borne07rootstacks}, we know that 
\begin{equation*}
    \left[ \td{X}/\mu_r\right] \cong X_{\m L, s,r}.
\end{equation*}

Suppose $X$ is a smooth curve and $D= p_1+...+p_k$ is a reduced divisor and $r=2$. The cyclic cover $\phi:\td{X}\to X$ is branched at $p_i$, we denote by $w_i$ the ramification points such that $\phi(w_i)=p_i$. Note that the points $w_i$ are also the fixed points under the involution of $\td{X}$. In this case, the line bundles on the root stack $X_{D,2}$ can be described in terms of line bundles on $\td{X}$ as follows. 

Since the root stack $X_{D,2}$ is the quotient stack $\left[\td{X}/\mu_2\right]$, a line bundle on $\left[\td{X}/\mu_2\right]$ is the same as a $\mu_2$-equivariant line bundle on $\td{X}$. On $\O_{\td{X}}(w_i)$, there is a group action on $\textrm{Tot}(\O(w_i))$ which fixes the canonical section vanishing at $w_i$, we will denote by $L(w_i)$ the line bundle $\O(w_i)$ together with this $\mu_2$-equivariant sheaf structure. In particular, the induced $\mu_2$-action on the fiber of $\O(w_i)$ is $-Id.$ The pull back of a line bundle $F$ on $X$ to $\td{X}$ is equipped with a natural $\mu_2$-equivariant sheaf structure, whose induced action on the fiber at $w_i$ is $Id$ and the $\mu_2$-equivariant bundle is again denoted by $\phi^* F$. Since $\phi^*N\cong \O\left(\sum_i w_i\right)$, we can write 
        \begin{equation*}
            \O(w_i) \cong \O\left(2w_i +\sum_{i\neq j} w_j\right) \otimes \phi^*N^{\vee} \cong \O\left(\sum_{i\neq j} w_j\right) \otimes \phi^*(N^\vee\otimes \O(p_i)).
        \end{equation*}
        So we see that $L\left(\sum_{i\neq j} w_j\right) \otimes \phi^*(N^\vee\otimes \O(p_i))$ has the same underlying line bundle as $L(w_i).$ 

        As discussed above, every line bundle on $X_{D,2}$ is of the form $\psi^*F\otimes \O\left(\sum_{i\in I}\frac{p_i}{2}\right)$. In terms of the language of $\mu_2$-equivariant line bundles, we see that $\O\left(\frac{p_i}{2}\right)$ on $X_{D,2}$ corresponds to $L(w_i)$ on $\td{X}$. Moreover, the pushforward $\psi_*\wh{E}$ of a vector bundle $\wh{E}$ on $X_{D,2}$ is the $\mu_2$-invariant subbundle of the $\mu_2$-equivariant bundle $\phi_*\td{E}$, denoted by $(\phi_*\td{E})^{\mu_2}$, where $\td{E}$ is the $\mu_2$-equivariant vector bundle corresponding to $\wh{E}$.

\end{example}

\begin{prop}(\cite[Proposition 3.12]{borne07rootstacks}) \label{locally direct sum}
Suppose that $div(s)$ is an effective Cartier divisor. Let $\m F$ be a locally free sheaf on $X_{\m L,s,r}$. For each point $x\in X$, there exists a Zariski open neighborhood $U$ of $x$ such that $\m F|_{\psi^{-1}(U)}$ is a direct sum of invertible sheaves. 
\end{prop}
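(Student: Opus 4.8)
The plan is to reduce the statement to an explicit affine--quotient model of the root stack and then ``diagonalize'' the graded module attached to $\m F$. Since the assertion is Zariski-local on $X$, I would first pass to an affine open neighborhood $\Spec R\subseteq X$ of the given point $x$ on which $\m L\cong\O_X$; writing $s\in R$ for the resulting regular element (a local equation of $div(s)$), Example~\ref{root stack over affine scheme} identifies $X_{\m L,s,r}$ with the quotient stack $[\Spec R'/\mu_r]$, where $R'=R[t]/(t^r-s)$ and $\gamma\cdot t=\gamma^{-1}t$. Because the atlas $\Spec R'\to[\Spec R'/\mu_r]$ is a $\mu_r$-torsor, hence finite \'etale (we are in characteristic $0$), a locally free sheaf on the root stack is the same datum as a finitely generated \emph{projective} $R'$-module $M$ carrying a compatible $\Z/r\Z$-grading $M=\bigoplus_{i=0}^{r-1}M_i$ (the $\mu_r$-action). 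Localizing further, I may assume $R$ is local with maximal ideal $\f m$ and residue field $\kappa=\kappa(x)$; an isomorphism produced over $\Spec\O_{X,x}$ then spreads out to a Zariski neighborhood of $x$ since every sheaf in sight is finitely presented.

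Next I would split into two cases. If $div(s)$ avoids $x$, i.e. $s\notin\f m$, then $s$ is a unit, $\Spec R'\to\Spec R$ is a $\mu_r$-torsor over its own base and $[\Spec R'/\mu_r]\cong\Spec R$; thus $\m F$ is merely a locally free sheaf on an open subset of $X$ disjoint from $div(s)$, which after shrinking is $\O^{\oplus n}$ --- trivially a direct sum of invertible sheaves. The substantive case is $s\in\f m$. Here the key observation is that $R'$ is again \emph{local}: it is module-finite over $R$, so each of its maximal ideals contracts to $\f m$, while $R'\otimes_R\kappa=\kappa[t]/(t^r)$ is local; hence $R'$ has the unique maximal ideal $\f m_{R'}=\f m R'+(t)$, which is moreover a \emph{graded} ideal with $R'/\f m_{R'}=\kappa$. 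Being projective over a local ring, $M$ is free over $R'$; to make this compatible with the grading I would invoke graded Nakayama --- lift a homogeneous $\kappa$-basis of $M/\f m_{R'}M$ to homogeneous generators $m_1,\dots,m_\ell$ of $M$, of degrees $d_1,\dots,d_\ell$, obtaining a graded surjection $\bigoplus_j R'(d_j)\twoheadrightarrow M$; since $M$ is projective this splits, and reducing modulo $\f m_{R'}$ shows the kernel has vanishing fiber, hence (Nakayama once more) vanishes. Therefore $M\cong\bigoplus_j R'(d_j)$, a finite direct sum of grading shifts of $R'$. Translating back through Example~\ref{root stack = quotient stack} and the classification of line bundles on root stacks \cite[Corollary 3.2.1]{Cadman07rootstack}, each $R'(d_j)$ corresponds to a twist $\psi^*G_j\otimes\m T^{\otimes k_j}$ of the tautological sheaf, which is invertible; hence $\m F|_{\psi^{-1}(U)}$ is a direct sum of invertible sheaves.

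The main obstacle I anticipate is entirely concentrated in the local computation at points of $div(s)$: recognizing that the cyclic-cover ring $R'$ is local there (which is exactly what upgrades ``projective'' to ``free''), and then the bookkeeping that turns ``free'' into ``graded free'' for the $\Z/r\Z$-grading, i.e. checking that the standard minimal-resolution/graded-Nakayama argument genuinely applies with the graded maximal ideal $\f m R'+(t)$. Away from the discriminant divisor the statement is immediate, and no global input --- Chern classes, vanishing theorems, properness --- is needed; all the content sits in this stalk-level analysis together with the routine spreading-out step.
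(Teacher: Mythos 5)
Your argument is correct. Note, however, that the paper does not prove this proposition at all: it is quoted verbatim from Borne (\cite[Proposition 3.12]{borne07rootstacks}), so there is no internal proof to compare against, and your write-up is essentially the standard local argument underlying that reference. Concretely, you reduce to the affine model $[\Spec R'/\mu_r]$, $R'=R[t]/(t^r-s)$, identify locally free sheaves on the stack with finitely generated projective $\Z/r\Z$-graded $R'$-modules by flat descent along the atlas, observe that over a point of $div(s)$ the ring $R'$ is local with graded maximal ideal $\f m R'+(t)$ (so projective implies free), and then use graded Nakayama to produce a homogeneous basis, i.e.\ a decomposition $M\cong\bigoplus_j R'(d_j)$ whose summands correspond to twists of the tautological invertible sheaf; the case $s\notin\f m$ is immediate since the stack is then a scheme. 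The only steps worth spelling out in a careful write-up are the ones you already flag: homogeneous lifts exist because $\f m R'+(t)$ and $\f m_{R'}M$ are graded, and the kernel of the split graded surjection is finitely generated (a direct summand of a finite free module), so Nakayama applies to it as well; with those remarks the proof is complete.
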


\subsection{Root stack associated to a conic fibration}

Let $\pi:Q\to S$ be a conic fibration as defined in Section \ref{conic fibrations}. We denote by $S_1\subset S$ the discriminant locus of degenerate conics. A conic fibration $\pi:Q\to S$ is said to have simple degeneration if all the fibers are quadrics of corank $\leq 1$ and the discriminant locus is smooth. In other words, the degenerate fibers cannot be double lines. For the rest of the paper, we will assume all the conic fibrations to have simple degeneration.  

We define the $2$-nd root stack of $S$ along $S_1$ as $\wh{S}: = S_{S_1,2}$ and $\psi:\wh{S}\to S$ the projection. Then it is shown in \cite[Section 3.6]{kuznetsov2008derived} that there is a sheaf of algebra $\wh{\m B}_0$ on $\wh{S}$ such that $\psi_*\wh{\m B}_0 = \m B_0$, so there is an equivalence of categories
\begin{equation}
    \psi_* : \Coh (\wh S,\wh{\m B}_0) \xrightarrow{\sim } \Coh(S, \m B_0) 
\end{equation}
where $\Coh(\wh S,\wh{\m B}_0)$ is the abelian category of coherent sheaves on $\wh S$ with a left $\wh{\m B}_0$-module structure.
Moreover, the sheaf of algebra $\wh{\m B}_0$ is a sheaf of Azumaya algebra. 

Suppose $C\subset S$ is a smooth curve such that the intersection $S_1\cap C$ is tranverse, we restrict the conic fibration $Q\to S$ to the smooth curve $C\subset S$ to get a conic fibration $\pi|_C:Q|_C\to C$ with simple degeneration. We get the root stack $\wh{C}:=C_{S_1\cap C,2}\cong \wh{S}|_C$ and denote by $\wh{\m B}_0$ the restriction $\wh{\m B}_0|_{\wh{C}}$ by abuse of notation. The sheaf of algebra $\wh{\m B}_0$ on $\wh{C}$ is a trivial Azumaya algebra since $\wh{C}$ is smooth and $\dim(\wh{C})=1$ \cite[Corollary 3.16]{kuznetsov2008derived}. That means there exists a rank 2 vector bundle $E_0$ on $\wh{C}$ (root stack construction is preserved under pull back) such that $\wh{\m B}_0\cong \m End(E_0)$ and it induces the equivalence of categories:
\begin{equation*}
	 \left.\begin{array}{rcccl}
	 \Coh(\wh{C})&\xrightarrow{\sim}&\Coh(\wh{C},\wh{\m B}_0) &\xrightarrow{\sim}&\Coh(C, \m B_0) \\
	 \m F&\longmapsto& \m F\otimes E_0&\longmapsto& \psi_*(\m F\otimes E_0)
	 \end{array}\right.
	\end{equation*}
	Let us define the rank of a $\m B_0$-module to be the rank of the underlying $\O_C$-module. In particular, by the equivalence of categories, we have the following: 
\begin{cor}\label{rootstack over curves}
 The rank of a $\m B_0$-module $\psi_* (\m F \otimes E_0)$ on $C$ must be a multiple of 2. 
\end{cor}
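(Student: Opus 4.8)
The plan is to leverage the chain of equivalences
$\Coh(\wh{C})\xrightarrow{\sim}\Coh(\wh{C},\wh{\m B}_0)\xrightarrow{\sim}\Coh(C,\m B_0)$
just constructed, and trace through what happens to ranks of the underlying $\O_C$-modules. Concretely, a $\m B_0$-module $\m G$ on $C$ of the form $\psi_*(\m F\otimes E_0)$ arises from a coherent sheaf $\m F$ on $\wh{C}$ by tensoring with the fixed rank $2$ vector bundle $E_0$ and pushing forward along $\psi:\wh{C}\to C$; I want to show that the rank of the underlying $\O_C$-module of $\psi_*(\m F\otimes E_0)$ is always even.

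First I would reduce to the case where $\m F$ is locally free, or at least generically locally free: since rank is a generic notion, I may replace $\m F$ by its restriction to a dense open substack of $\wh{C}$ over which it is locally free, and work over the generic point. Next, the key local computation: away from the root-stack points (i.e. over $C\setminus(S_1\cap C)$, where $\psi$ is an isomorphism), $\psi_*(\m F\otimes E_0)$ is just $\m F\otimes E_0$ with $E_0$ of rank $2$, so its $\O_C$-rank is $2\cdot\mathrm{rk}(\m F)$, which is even. Since the generic point of $C$ lies in this locus (the intersection $S_1\cap C$ being a proper closed subset of $C$), the rank of $\psi_*(\m F\otimes E_0)$ as an $\O_C$-module equals $2\,\mathrm{rk}(\m F)$, hence is a multiple of $2$. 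The behavior at the finitely many stacky points $w_i$ does not affect the generic rank, so it can be ignored for this statement.

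Alternatively — and this is the cleaner route I would actually present — one can compute the rank directly over the generic point $\eta$ of $C$ without any reference to local freeness at the stacky points: $\psi$ restricted over a neighborhood of $\eta$ is an isomorphism onto its image because $\eta\notin S_1\cap C$, so $(\psi_*(\m F\otimes E_0))_\eta\cong (\m F\otimes E_0)_{\psi^{-1}(\eta)}$, whose length over the DVR $\O_{C,\eta}$ is $2\cdot\dim_{k(\eta)}\m F_{\psi^{-1}(\eta)}$. Thus the rank is $2\,\mathrm{rk}(\m F)\in 2\Z$. The main (and only mild) obstacle is bookkeeping about what "rank of a $\m B_0$-module" means as a scheme-theoretic invariant and making sure that $\psi_*$ does not alter the generic rank — but since $\psi$ is a coarse-space map that is an isomorphism over the dense open complement of the branch divisor, and $E_0$ has constant rank $2$, this is immediate. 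No serious difficulty is expected; the content of the corollary is really just the observation that $E_0$ has rank $2$ together with the genericity of the unramified locus.
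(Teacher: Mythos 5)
Your proposal is correct and is essentially the paper's own (implicit) argument: the paper treats the corollary as an immediate consequence of the equivalence $\Coh(\wh{C})\simeq\Coh(C,\m B_0)$, $\m F\mapsto \psi_*(\m F\otimes E_0)$, and the fact that $E_0$ has rank $2$, which is exactly what your generic-point computation makes precise ($\psi$ is an isomorphism away from the finitely many points of $C\cap S_1$, so the generic rank is $2\,\mathrm{rk}(\m F)$).
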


Let $p\in S_1\cap C$. According to Proposition \ref{locally direct sum}, there exists a Zariski open neighbourhood $U$ of $p$ such that $E_0|_{\psi^{-1}(U)}\cong L_1\oplus L_2$ for some line bundles $L_i$ on $\psi^{-1}(U).$ As explained in Example \ref{root stack over affine scheme} and by shrinking $U$ if necessary, we can assume that $U= \Spec(R)$ is an affine neighbourhood of $p$ and consider the double cover $\td{U}= \Spec(R')$ where $R':=R[t]/(t^2-s)$ and $w= div(s)$ maps to $p$. Then the root stack restricted over $U$ is simply $\wh{U}= \left[ \left(\Spec(R[t]/(t^2-s)\right)/\mu_2\right]$. Then, as explained in Example $\ref{root stack = quotient stack}$, each $L_i$ is an $\mu_2$-equivariant bundle on $\td{U}$, and in particular, each $L_i$ defines a character $\chi_{i,w}:\mu_2\to \C^*$ of $\mu_2$ at the fiber $L_i|_w$. 

\begin{prop}\label{characters of E0}
We have $\chi_{1,w} (-1) \cdot \chi_{2,w}(-1) = -1.$ In particular, the $\mu_2$-invariant part of the fiber $E_0|_{w}$ is one-dimensional. 
        \end{prop}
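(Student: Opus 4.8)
The plan is to reduce the statement to a purely local computation in the fiber of the conic fibration over the point $p\in S_1\cap C$, exploiting the fact that $\wh{\m B}_0$ is an Azumaya algebra on $\wh C$ while $\m B_0$ on $S$ degenerates precisely along $S_1$. First I would pass to the affine model $U=\Spec(R)$ around $p$ and the double cover $\td U=\Spec(R')$ with $R'=R[t]/(t^2-s)$, so that $\wh U=[\td U/\mu_2]$ and $w=\operatorname{div}(t)$ is the ramification point over $p$. Under the equivalence $\Coh(\wh U)\xrightarrow{\sim}\Coh(U,\m B_0)$, the vector bundle $E_0$ corresponds to $\psi_*(E_0)$, a $\m B_0$-module; the key point is to identify the fiber $E_0|_w$ together with its $\mu_2$-action with something intrinsic to the Clifford algebra $\m B_0\otimes_R \kappa(p)$.

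The heart of the argument is the structure of the even Clifford algebra over the degenerate point. Over $\kappa(p)$ the conic is a pair of (possibly coincident, but excluded by simple degeneration) distinct lines, i.e. the rank-$3$ quadratic form $q_p$ has corank exactly $1$. The even Clifford algebra of such a form is not simple: $\m B_0\otimes\kappa(p)\cong \kappa(p)\times\kappa(p)$ (the two factors corresponding to the two rulings / the two points of $\wh\Delta$ over $p$), or rather, after accounting for the module being an $E_0$-twisted object, the relevant local picture is the quiver with two vertices $+,-$, arrows $\a,\b$ and relations $\a\b=\b\a=0$ advertised in the introduction. The splitting $E_0|_{\psi^{-1}(U)}\cong L_1\oplus L_2$ is exactly a decomposition into the two "eigenline bundles" for this idempotent structure after passing to the root stack, and the two characters $\chi_{1,w},\chi_{2,w}$ record how $\mu_2$ acts on the two summands at the branch point. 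What I would show is that the generator of $\wh{\m B}_0$ coming from $\wedge^2 F\otimes L$ (equivalently, the Clifford multiplication by a vector spanning the radical of $q_p$, suitably normalized over the root stack by a power of the tautological section $\O(\tfrac{D}{2})$) interchanges the two summands $L_1|_w\leftrightarrow L_2|_w$, or more precisely acts with a single off-diagonal block whose composition with itself is the defining section — this forces the two $\mu_2$-characters to be \emph{opposite}, giving $\chi_{1,w}(-1)\chi_{2,w}(-1)=-1$. Concretely: if $e$ is a local generator of the radical line of the quadric, then $e$ viewed inside $\m B_0$ over the root stack acquires a half-twist by $\m T=\O(\tfrac{D}{2})$ (since $e^{\otimes 2}\sim s$ by the Clifford relation $v\otimes v\otimes f = \langle q(v,v),f\rangle$ and $s$ is the discriminant section), so conjugation/multiplication by $e$ on $E_0\cong L_1\oplus L_2$ is $\mu_2$-equivariant only if it carries $L_1\to L_2\otimes\m T^{\pm1}$, and reading off the $\mu_2$-weight at $w$ (where $\m T|_w$ has weight $-1$, as in Example \ref{root stack = quotient stack}) yields $\chi_{2,w}=\chi_{1,w}\cdot(-1)$, i.e. $\chi_{1,w}(-1)\chi_{2,w}(-1)=\chi_{1,w}(-1)^2\cdot(-1)=-1$.

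The "in particular" is then immediate: the fiber $E_0|_w=L_1|_w\oplus L_2|_w$ carries the $\mu_2$-representation $\chi_{1,w}\oplus\chi_{2,w}$ with $\chi_{1,w}\ne\chi_{2,w}$, so exactly one of the two characters is trivial, and hence the $\mu_2$-invariant subspace $(E_0|_w)^{\mu_2}$ is one-dimensional. I would write this last step in one line once the character computation is in hand.

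I expect the main obstacle to be the bookkeeping of the twist by the tautological sheaf $\m T=\O(\tfrac{D}{2})$ when one expresses Clifford multiplication by a radical vector as a morphism of $\mu_2$-equivariant bundles on $\td U$: one must be careful that $E_0$ is only defined up to twist by line bundles pulled back from $C$ (which do not affect the \emph{difference} $\chi_{1,w}(-1)\chi_{2,w}(-1)$, so the statement is well posed), and that the Clifford relation over the root stack produces precisely one factor of $\m T$, not zero or two. Isolating a clean local normal form for $\wh{\m B}_0$ near $p$ — say $\wh{\m B}_0|_{\wh U}\cong\m End(L_1\oplus L_2)$ with the isomorphism chosen so that the image of a radical generator is the elementary matrix swapping the factors up to the section $t$ — is the step that requires genuine care; everything downstream is formal.
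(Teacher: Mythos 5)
Your strategy is genuinely different from the paper's, and it could in principle work, but as written it has a gap at exactly the step you yourself flag as "requiring genuine care": you never establish the local normal form of $\wh{\m B}_0$ near $p$, i.e.\ that the Azumaya algebra $\wh{\m B}_0\cong\m End(E_0)$ contains, near $w$, a $\mu_2$-\emph{anti}-invariant element that is invertible at $w$ (an even Clifford element involving the radical direction, such as $e_1e_3$ or $e_2e_3$, divided by the root-stack parameter $t$). That element is the whole content of your argument: once it exists, equivariance plus invertibility forces it to interchange $L_1|_w$ and $L_2|_w$ and hence $\chi_{1,w}\neq\chi_{2,w}$. But the properties of $\wh{\m B}_0$ actually available in this paper — $\psi_*\wh{\m B}_0\cong\m B_0$ and the Azumaya/trivial-Azumaya property over the curve — do not determine how $\mu_2$ acts on $\wh{\m B}_0|_w$; to produce your swapping element you must import Kuznetsov's explicit construction of $\wh{\m B}_0$ on the root stack and verify there that the rescaled element lies in $\wh{\m B}_0$ and is a unit at $w$ (this is where transversality of $C\cap\Delta$ enters, through first-order vanishing of the coefficient $f_{33}$ of the form at $p$). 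Two smaller inaccuracies in the sketch: "Clifford multiplication by a vector spanning the radical" is an \emph{odd} element and does not live in $\m B_0$ or act on a $\m B_0$-module, so the bookkeeping must be done with the even elements $e_ie_3$; and $\m B_0\otimes\kappa(p)$ is not $\kappa(p)\times\kappa(p)$ but the four-dimensional algebra with radical $(y,z)$ (the quiver algebra), of which $\C\times\C$ is only the semisimple quotient.

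For comparison, the paper's proof avoids any knowledge of the structure of $\wh{\m B}_0$ beyond $\psi_*\m End(E_0)\cong\m B_0$: it argues by contradiction that if $\chi_{1,w}(-1)=\chi_{2,w}(-1)$ (i.e.\ $l_1=l_2$ for the graded local generators $e_1,e_2$), then in the local model the natural map $\psi_*\m End(E_0)\to\m End(\psi_*E_0)$ is surjective, so $\m B_0|_p$ would be a full $2\times 2$ matrix algebra, contradicting the degeneracy of the conic over $p$. If you wish to keep your direct route you must supply the missing local description of $\wh{\m B}_0$; alternatively, your setup collapses onto the paper's argument by noting that if both characters were equal the $\mu_2$-action on $\m End(E_0)|_w$ would be trivial and $\m B_0|_p$ would then surject onto $M_2(\C)$, which is impossible.
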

        \begin{proof}
        We will follow the notations in the preceding paragraph. We can further reduce to the localization of $R$ at $p$, we will again write the local ring as $R$.

      As explained in Example \ref{equivariant module}, the rank 2 vector bundle $E_0$ on $\wh{U}$ is an $R'$-module $M$ with a $\mu_2$-action such that $M \cong M_0\oplus M_1$ where $M_i$ are $R$-modules. By Proposition \ref{locally direct sum}, we can write
      $M\cong R'[l_1]\oplus R'[l_2]$ as $\Z/2\Z$-graded $R'$-modules where $l_i\in \{0,1\}$, or equivalently, choose $e_1\in M_{l_1}$ and $e_2\in M_{l_2}$ such that $ M\cong R'e_1\oplus R'e_2$. Since $\chi_{i,w}(-1) = (-1)^{l_i}$ for $i=1,2$, it suffices to check $l_1+l_2=1\in \Z/2\Z$.
      
        Suppose the contrary that $l_1=l_2=0$ (or $l_1=l_2=1$). Recall that $E_0$ satisfies $\psi_*\m End(E_0) \cong \m B_0$ as sheaf of algebras. Since the conic of $Q$ over $p$ is degenerate, its even Clifford algebra $\m B_0|_p$ is not isomorphic to the endomorphism algebra of rank 2. 
        
        On the other hand, there is a natural morphism 
        \begin{equation*}
           \a: \psi_*\m End(E_0)\to \m End(\psi_*E_0).
        \end{equation*}
         Since $E_0$ corresponds to a $\Z/2\Z$-graded $R'$-module, $\m End(E_0)$ also corresponds to a $\Z/2\Z$-graded $R'$-module and so $\psi_*\m End(E_0)$  corresponds to the $\mu_2$-invariant part i.e. $\left(\m End(E_0)\right)_0$ which is an $R$-module. In terms of the $R'$-basis $\{e_1,e_2\}$, $\left(\m End(E_0)\right)_0$ consists of the homogeneous $R$-module homomorphisms $\delta$ of degree 0:
     \begin{align*}
         e_1&\mapsto u_0e_1 + u_0'e_2\\
         e_2&\mapsto v_0e_1+ v_0'e_2
     \end{align*}
     where $u_0,u_0',v_0,v_0'\in R'_0 = R$. Similarly, the module $\psi_*E_0$ is the $\mu_2$-invariant part of $R'e_1\oplus R'e_2$ which is freely generated by $\{f_1=e_1,f_1=e_2\}$ (or $\{f_1=te_1,f_2=te_2\}$ when $l_1=l_2=1$) as $R$-modules. For $l_1=l_2=0$, $\delta\in \psi_*\m End(E_0)$ is mapped to an image in $\m End(\psi_*E_0)$ of the form 
     \begin{align*}
         f_1&= e_1\mapsto u_0e_1 + u_0'e_2=u_0f_1 + u_0'f_2\\
         f_2&= e_2\mapsto v_0e_1+ v_0'e_2=v_0f_1+ v_0'f_2
     \end{align*}
     Since $u_0,v_0,u_0',v_0'$ are arbitrary elements in $R$, the image of $\a$ will be the endomorphism algebra over $R$ i.e. $\a$ is an isomorphism of $R$-algebras, which is a contradiction. For $l_1=l_2=1$, the image of $\a$ is also surjective for the same reason.
    
    The second assertion is clear because $\chi_{1,w} (-1) \cdot \chi_{2,w}(-1) = -1$ means that one and only one of $L_1|_w$ and $L_2|_w$ is $\mu_2$-invariant. 
        \end{proof}

\subsection{Moduli space of $\m B_0$-modules}
In order to guarantee the existence of a moduli space of $\m B_0$-modules, we will use Simpson's theory of moduli spaces of $\Lambda$-modules \cite{Simpson94moduli1}. Let us recall the definition of a sheaf of rings of differential operators from Simpson's paper \cite{Simpson94moduli1} and follow its notations closely. Suppose $S$ is a noetherian scheme over $\C$, and let $f:X\to S$ be a scheme of finite type over $S.$  A sheaf of rings of differential operators on $X$ over $S$ is a sheaf of (not necessarily commutative) $\O_X$-algebras $\Lambda$ over $X, $ with a filtration $\Lambda_0\subset \Lambda_1\subset...$ which satisfies the following properties:
        \begin{enumerate}
            \item $\Lambda =\bigcup^\infty_{i=0}\Lambda_i $ and $\Lambda_i\cdot \Lambda_j\subset \Lambda_{i+j}.$
            \item The image of the morphism $\O_X\to \Lambda$ is equal to $\Lambda_0.$
            \item The image of $f^{-1}(\O_S)$ in $\O_X$ is contained in the center of $\Lambda.$
            \item The left and right $\O_X$-module structures on $Gr_i(\Lambda) := \Lambda_i/\Lambda_{i-1}$ are equal. 
            \item The sheaves of $\O_X$-modules $Gr_i(\Lambda)$ are coherent.
            \item The sheaf of graded $\O_X$-algebras $Gr(\Lambda) :=  \bigoplus^\infty_ {i=0}Gr_i(\Lambda)$ is generated by $Gr_1(\Lambda)$ in the sense that the morphism of sheaves 
            \begin{equation*}
                Gr_1(\Lambda)\otimes_{\O_X}....\otimes_{\O_X} Gr_1(\Lambda) \to Gr_i(\Lambda)
            \end{equation*}
            is surjective. 
        \end{enumerate}
        The definition of stability condition for a $\Lambda$-module $\m E$ is similar as the case of coherent sheaves. We define $d(\m E), p(\m E,n), r(\m E)$ to be the dimension, Hilbert polynomial and rank of the underlying coherent sheaf of $\m E$ respectively. As defined in \cite{Simpson94moduli1}, a $\Lambda$-module $\m E$ is $p$-semistable (resp. $p$-stable) if it is of pure dimension, and if for any sub-$\Lambda$-module $\m F\subset \m E $ with $0<r(\m F)<r(\m E)$, there exists an $N$ such that 
        \begin{equation}
            \frac{p(\m F, n)}{r(\m F)}\leq \frac{p(\m E,n)}{r(\m E)}
        \end{equation}
        (resp. $<$) for $n\geq N$. 
        
        We will now specialize to the case where $S= \Spec(\C)$ and $X=\P^2$. We fix a conic fibration $\pi:Q\to \P^2$ for the rest of the section and let $\m B_0$ be the associated sheaf of even Clifford algebras. 
        \begin{prop}
        The sheaf of $\O_{\P^2}$-algebra $\m B_0$ is a sheaf of rings of differential operators. 
        \end{prop}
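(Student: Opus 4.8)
The plan is to verify the six axioms in Simpson's definition directly against the structure of $\m B_0$ recalled in Section~\ref{conic fibrations}. Recall that $\m B_0$ carries the two-step filtration $\O_{\P^2} = F_0 \subset F_1 = \m B_0$, with $F_1/F_0 \cong \wedge^2 F \otimes L$, and that as an $\O_{\P^2}$-module $\m B_0 \cong \O_{\P^2} \oplus (\wedge^2 F \otimes L)$. So first I would take $\Lambda_0 := F_0 = \O_{\P^2}$ and $\Lambda_i := F_1 = \m B_0$ for all $i \geq 1$; this is the filtration I will check the axioms against. Since this filtration stabilizes at $i=1$, several of the axioms become easy to check.

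Next I would run through the axioms in order. Axiom (1): $\Lambda = \bigcup \Lambda_i = \m B_0$ is clear, and $\Lambda_i \cdot \Lambda_j \subset \Lambda_{i+j}$ needs to be verified only in the first nontrivial case, $\Lambda_1 \cdot \Lambda_1 \subset \Lambda_2 = \Lambda_1 = \m B_0$, which holds trivially since $\m B_0$ is closed under its own multiplication; for $i$ or $j$ equal to $0$ it is just $\O_{\P^2}$-linearity. Axiom (2): the image of $\O_{\P^2} \to \m B_0$ is $F_0 = \Lambda_0$ by construction of the filtration. Axiom (3): since $S = \Spec(\C)$, the map $f^{-1}(\O_S)$ in $\O_{\P^2}$ is just the constants $\C$, which visibly lie in the center of $\m B_0$. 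Axiom (4): for $i=0$, $Gr_0(\Lambda) = \O_{\P^2}$ has both module structures equal; for $i \geq 1$, $Gr_i(\Lambda) = 0$ except $Gr_1(\Lambda) = F_1/F_0 \cong \wedge^2 F \otimes L$, and I would observe that this associated graded piece is a line bundle on $\P^2$ (as $F$ has rank $3$ and $L$ is a line bundle), hence its left and right $\O_{\P^2}$-module structures coincide — this is essentially the statement that the commutator $[\m B_0, F_1] \subset F_0$, which follows from the defining relations of the even Clifford algebra, or more directly from the fact that $Gr(\m B_0)$ is commutative. Axiom (5): $Gr_0 = \O_{\P^2}$ and $Gr_1 = \wedge^2 F \otimes L$ are coherent, and all higher graded pieces vanish. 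Axiom (6): $Gr(\Lambda) = \O_{\P^2} \oplus (\wedge^2 F \otimes L)$ with the multiplication induced from $\m B_0$; since $Gr_i = 0$ for $i \geq 2$, the surjectivity requirement $Gr_1^{\otimes i} \to Gr_i$ is automatic for $i \geq 2$ (target is zero) and for $i = 1$ it is the identity, so generation by $Gr_1$ holds vacuously.

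The only step requiring genuine care is Axiom (4) in degree $1$, i.e.\ checking that the left and right $\O_{\P^2}$-module structures on $Gr_1(\m B_0) = \wedge^2 F \otimes L$ agree. I expect this to be the main (though still minor) obstacle, and I would handle it by invoking the explicit description of $\m B_0$ from \cite{auel14quadrics}: the commutator of two elements of $F_1$ lands in $F_0$, so $Gr_1(\m B_0)$ is a quotient on which left and right multiplication by $\O_{\P^2}$ induce the same action; equivalently one notes that the associated graded algebra $Gr(\m B_0)$ is commutative (it is generated in degrees $0$ and $1$ with $Gr_0$ central and $Gr_1$ a line bundle, and any two local sections of a rank-one module commute up to the commutator relation which is of lower order). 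Once Axiom (4) is in place, the remaining axioms are immediate from the structural facts already recorded, and the proposition follows. I would close with the remark that this verification is exactly what licenses the use of Simpson's construction \cite{Simpson94moduli1} to produce the moduli space $\f M_{d,e}$ of semistable $\m B_0$-modules used throughout the rest of the paper.
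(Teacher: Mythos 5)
Your proposal follows the same route as the paper: take the two-step filtration $\Lambda_0=\O_{\P^2}$, $\Lambda_i=\m B_0$ for $i\geq 1$, and check Simpson's six axioms, with (1), (2), (5) immediate, (3) from centrality of scalars, and (6) vacuous because $Gr_i(\Lambda)=0$ for $i>1$. The verification is essentially correct, so the conclusion stands.

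One correction on the step you single out as delicate, axiom (4). The justification you offer — that the commutator $[\m B_0,F_1]\subset F_0$ "follows from the defining relations" — is actually false: in a degenerate fiber, with the basis $x=ie_1\wedge e_2$, $y=ie_2\wedge e_3$, $z=e_1\wedge e_3$ used later in the paper, one has $[x,y]=xy-yx=-2z$, which lies in the $\wedge^2F$-part and not in $F_0$. Fortunately this stronger statement is not what axiom (4) asks for: you only need the left and right $\O_{\P^2}$-module structures on $Gr_i(\Lambda)$ to agree, and these agree already on $\m B_0$ itself, since $\m B_0$ is by construction a quotient of the tensor algebra $T^\bullet(F\otimes F\otimes L)$ of $\O_{\P^2}$-modules, on which $\O_{\P^2}$ acts identically from both sides; this is exactly the paper's one-line argument ("the left and right $\O_{\P^2}$-module structures coincide by definition"). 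Your alternative remark that $Gr(\m B_0)$ is commutative is true, but only for the trivial reason that products of two degree-one classes land in $Gr_2=0$, and in degree $(0,1)$ its commutativity is the same statement as axiom (4), so it should not be used as the source of that axiom. With that substitution the proof is the paper's proof.
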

        \begin{proof}
          Recall that as an $\O_{\P^2}$-module, $\m B_0\cong \O_{\P^2}\oplus (\wedge^2 F\otimes \m L)$ with the filtration $\Lambda_0 = \O_{\P^2}, \Lambda_i = \m B_0$ for $i\geq 1$. Properties (1), (2), and (5) are clearly satisfied. The center of $\m B_0$ is $\Lambda_0$, so (3) is also satisfied. The left and right $\O_{\P^2}$-module on $\m B_0$ coincide by definition, so the induced left and right $\O_{\P^2}$-module structure  also coincide on $Gr_i(\Lambda).$ Finally, since $Gr_i(\Lambda)=0$ for $i>1$, property (6) is satisfied trivially. 
        \end{proof}
        
        Since $\m B_0$ is a sheaf of rings of differential operators, \cite[Theorem 4.7]{Simpson94moduli1}  guarantees the existence of a moduli space of semistable $\m B_0$-modules with a fixed Hilbert polynomial whose closed points correspond to Jordan equivalence classes of $\m B_0$-modules. Note that specifying a Hilbert polynomial is equivalent to specifying the Chern character for the case of $\P^2$. In this paper, we will be primarily interested in the moduli space of semistable $\m B_0$-module on $\P^2$ with Chern character $(0,2d,e)$, denoted by $\f M_{d,e}$. The Chern character of a $\m B_0$-module is defined as the Chern character of the underlying coherent sheaf. 
        
        Recall that in the case of a moduli space of one-dimensional coherent sheaves, one can define a support morphism by following Le Potier's construction \cite[Section 2.2]{lepotier1993}: a pure dimension one coherent sheaf $G$ on a smooth projective polarized surface $(Y, \O_Y(1))$ is Cohen-Macaulay, so $G$ admits a resolution by vector bundles of rank $r$:
        \begin{equation*}
            0\to A\xrightarrow{u} B\to G\to 0
        \end{equation*}
        Then we can define the so-called Fitting support $\textrm{Fit}(G)$ to be the vanishing subscheme of the induced morphism $\wedge^r u$. It can be checked that $\textrm{Fit}(G)$ is independent of the resolution and $\textrm{Fit}(G)$ represents its first Chern class $\textrm{c}_1(G)$. As the Fitting support construction works in families, it defines a support morphism from the moduli space of pure dimension one sheaves to the Hilbert scheme of curves of degree $\mathrm{c}_1(G)\cdot \O_Y(1)$. When $Y=\P^2$, the latter space is simply the linear system $|\O_{\P^2}(\textrm{deg}(G))| $.
        
        Similarly, for a pure dimension one $\m B_0$-module $M$ of Chern character $(0,2d,e)$, we can define the Fitting support of its underlying coherent sheaf $\textrm{Fit}(M): =\textrm{Fit}(\textrm{Forg}(M))$. This construction again works in families and induces a support morphism on the moduli space of $\m B_0$-modules
        \begin{equation*}
            \Upsilon: \f M_{d,e} \to |\O(2d)|.
         \end{equation*}
       
        The following is observed in \cite{Lahoz15ACM}.       
        \begin{prop}[\cite{Lahoz15ACM}]
        The support morphism $\Upsilon: \f M_{d,e}\to |\O(2d)|$ factors through $|\O(d)|\subset |\O(2d)|.$ 
        \end{prop}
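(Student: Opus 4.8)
The plan is to prove that, for every semistable $\m B_0$-module $M$ with $\ch(M)=(0,2d,e)$, the Fitting divisor $\textrm{Fit}(M)\in|\O(2d)|$ is of the form $2Z$ for an effective divisor $Z$ of degree $d$; since the inclusion $|\O(d)|\hookrightarrow|\O(2d)|$ is the map $D\mapsto 2D$, this is exactly the assertion that $\Upsilon$ factors through $|\O(d)|$, and because the Fitting support is formed in families (as recalled above following Le Potier) it is enough to check this at closed points of $\f M_{d,e}$. I would begin by recalling that, $\P^2$ being a smooth surface and $M$ being pure of dimension one, $\textrm{Fit}(M)$ is the genuine effective Weil divisor $\sum_i\ell_iC_i$, where the $C_i$ are the reduced irreducible components of $\operatorname{Supp}(M)$ and $\ell_i=\operatorname{length}_{\O_{\P^2,\eta_i}}(M_{\eta_i})$ is the multiplicity of $M$ along $C_i$, taken at the generic point $\eta_i$. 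Since $\sum_i\ell_i\deg(C_i)=\deg(c_1(M))=2d$, the whole proposition reduces to the single claim that each $\ell_i$ is \emph{even}: one then takes $Z:=\sum_i(\ell_i/2)C_i$.

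The next step is a purely local analysis at a generic point $\eta=\eta_i$. Write $R=\O_{\P^2,\eta}$, a discrete valuation ring with maximal ideal $\mathfrak m$ and residue field $\kappa=\C(C_i)$, and let $\mathcal B=(\m B_0)_\eta$; this is an $R$-algebra which is free of rank $4$ as an $R$-module, with $R$ contained in its centre. The module $M_\eta$ has finite length over $R$, and its $\mathfrak m$-adic filtration is a filtration by $\mathcal B$-submodules (because $\mathfrak m$ is central), whose successive quotients are modules over the $4$-dimensional $\kappa$-algebra $\overline{\mathcal B}:=\mathcal B/\mathfrak m\mathcal B$. Since $\ell_i$ equals the sum of the $\kappa$-dimensions of these quotients, it suffices to show that \emph{every finite-dimensional $\overline{\mathcal B}$-module has even $\kappa$-dimension}, which I would establish by distinguishing two cases according to the position of $C_i$ relative to the discriminant $\Delta$.

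If $C_i\not\subset\Delta$, then the conic of $Q$ over the generic point of $C_i$ is smooth, so the discriminant of the corresponding quadratic form is a unit at $\eta$; hence $\mathcal B$ is an Azumaya algebra of rank $4$ over $R$, and $\overline{\mathcal B}$ is a $4$-dimensional central simple $\kappa$-algebra, i.e.\ either $M_2(\kappa)$ or a division quaternion algebra — and in both cases every module over it has $\kappa$-dimension divisible by $2$. If instead $C_i\subset\Delta$, then $\Delta$ being smooth forces $C_i=\Delta$, and $\overline{\mathcal B}$ is the even Clifford algebra of a corank-one ternary form over $\kappa=\C(\Delta)$. A short Clifford-algebra computation (already implicit in the introduction) shows that, for a suitable choice of generators $u,v$, one has $u^2=c\in\kappa^\ast$, $v^2=0$ and $uv+vu=0$, where $\kappa':=\C(\td\Delta)=\kappa(\sqrt c)$; thus over $\kappa'$ the algebra $\overline{\mathcal B}\otimes_\kappa\kappa'$ is the split algebra $\kappa'\langle u,v\rangle/(u^2-1,\,v^2,\,uv+vu)$, which is the path algebra of the quiver with two vertices $+,-$, arrows $\a\colon+\to-$, $\b\colon-\to+$ and relations $\a\b=\b\a=0$ considered above. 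Its two primitive idempotents $e_\pm=(1\pm u)/2$ correspond to the two points of the fibre of $\pi:\td\Delta\to\Delta$, and they are interchanged by the nontrivial element of $\operatorname{Gal}(\kappa'/\kappa)$, this Galois action being exactly the monodromy of the double cover $\pi$. Consequently, for any $\overline{\mathcal B}$-module $N$ the semilinear Galois action on $N\otimes_\kappa\kappa'$ swaps the two summands $e_+(N\otimes_\kappa\kappa')$ and $e_-(N\otimes_\kappa\kappa')$, which therefore have equal $\kappa'$-dimension, so $\dim_\kappa N=\dim_{\kappa'}(N\otimes_\kappa\kappa')$ is even, completing the argument.

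The step I expect to be the genuine obstacle is the case $C_i=\Delta$. There $\m B_0$ is not Azumaya, and its fibre $\overline{\mathcal B}$ is the non-semisimple quiver algebra above, which has one-dimensional simple modules; so the evenness of the multiplicity $\ell_\Delta$ is \emph{not} a formal statement about $\overline{\mathcal B}$ alone but depends on the geometric input that the two rulings over the generic point of $\Delta$ are globally exchanged by $\pi$ — equivalently, that the idempotents $e_\pm$ do not descend to $\C(\Delta)$. This is where the assumption that $\pi$ is the honest étale double cover of the setup enters; were $\td\Delta\to\Delta$ split, a $\m B_0$-module could be a line bundle supported on $\Delta$ and the conclusion would fail. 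A secondary, routine point to pin down is the ``formation in families'' reduction invoked at the start, which follows from the base-change compatibility of the relative Fitting support.
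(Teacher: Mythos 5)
Your proof is correct, but it takes a genuinely different route from the paper's. The paper deduces everything from its Corollary \ref{rootstack over curves}: via the root stack $\wh{C}$ and the trivialization $\wh{\m B}_0\cong\m End(E_0)$, any $\m B_0$-module on a curve is $\psi_*(\m F\otimes E_0)$ and hence has even rank; applying this to smooth curves and to normalizations of the components of the support, the paper concludes that no reduced component can carry odd generic rank, so the Fitting divisor lies in $|\O(d)|$. You instead work purely locally at the generic point $\eta_i$ of each component: identifying the multiplicity $\ell_i$ with $\operatorname{length}_{\O_{\P^2,\eta_i}}(M_{\eta_i})$ and filtering by the (central) maximal ideal, you reduce to showing that every finite-dimensional module over the fiber algebra $\m B_0\otimes\kappa(C_i)$ has even dimension, which you get from its structure as a quaternion (central simple) algebra when $C_i\not\subset\Delta$, and from the quadratic field subalgebra $\kappa(\sqrt{c})\cong\C(\td\Delta)$ (equivalently the Galois swap of the idempotents $e_\pm$) when $C_i=\Delta$. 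What your approach buys: it avoids the root-stack machinery, it treats arbitrary multiplicities directly rather than arguing component-by-component via normalizations, and it honestly covers the case of a component contained in $\Delta$ (as well as non-transverse intersections), which the paper's cited corollary does not literally address since its hypotheses require transversality with $\Delta$; what the paper's route buys is brevity, since the equivalence $\Coh(C,\m B_0)\simeq\Coh(\wh{C})$ is already set up and is reused immediately afterwards in Theorem \ref{fibers as even card subsets}. Two small caveats on your write-up: the case $C_i=\Delta$ (which can only occur when $\deg\Delta\le 2d$, so never in the paper's applications with $d=1,2$) does require $\td{\Delta}$ irreducible, i.e.\ the double cover non-split, as you correctly flag — this is part of the standing Prym setup; and the reduction to closed points gives the set-theoretic factorization through the closed subvariety $|\O(d)|\subset|\O(2d)|$, which is all the paper itself establishes, so you are at the same level of rigor there.
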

        \begin{proof}
         By Corollary \ref{rootstack over curves}, if a $\m B_0$-module $M$ is (set-theoretically) supported on a smooth curve $C$ of degree $2d$, its rank must be a multiple of 2. It follows that the Fitting support $\textrm{Fit}(M)$ must be a nonreduced curve. So we see that the fiber over a smooth curve $C\in |\O(2d)|$ is empty. 
         
         If a $\m B_0$-module $M$ is supported on an integral but singular curve $C$ of degree $2d$, we can pull back $M$ to its normalization and apply Corollary \ref{rootstack over curves}, the rank of the pull back of $M$ must be a multiple of 2. Hence, $M$ has generically rank multiple of 2, its Fitting support will not be reduced. So again the fiber over $C\in |\O(2d)|$ is empty.  
         
         Finally, for a $\m B_0$-module $M$ supported on a reduced but reducible curve, the same argument as in the previous paragraph shows that the fiber over such a curve is empty. 
         
         Hence, the image of $\Upsilon$ must be contained in the nonreduced locus $|\O(d)|\subset |\O(2d)|.$
        \end{proof}
        
        \begin{rmk}
        From now on, we will write $\Upsilon: \f M_{d,e}\to |\O(d)|$. 
        \end{rmk}

        \begin{thm}[\cite{Lahoz15ACM}] \label{fibers as even card subsets}
         Let $U_d\subset |\O_{\P^2}(d)| $ be the open subset of smooth degree $d$ curves which intersect $\Delta$ transversally and $C\in U_d$. Then 
        \begin{equation*}
            \Upsilon^{-1} (C) \cong \bigsqcup_{I} \Pic^{-|I|/2}C.
        \end{equation*}
        where $I$ runs over the even cardinality subsets of $\{1,...,dk\}$ and $k:=\deg(\Delta)$.  
        \end{thm}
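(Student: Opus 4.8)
The plan is to pull everything back to line bundles on the root stack $\widehat C:=C_{C\cap\Delta,2}$ of the restricted conic fibration $Q|_C\to C$, and then to count the components and compute their dimensions using Cadman's description of line bundles on root stacks together with an Euler-characteristic computation that feeds on Proposition~\ref{characters of E0}.

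\emph{Step 1 (every module in the fibre lives on the reduced curve $C$).} Let $M$ represent a point of $\Upsilon^{-1}(C)$; since $C\in U_d$ is smooth and $\mathrm{ch}(M)=(0,2d,e)$ with $\deg C=d$, $M$ is pure of dimension one, set-theoretically supported on $C$, with generic rank $2$ along $C$. As $I_C\subset\mathcal O_{\mathbb P^2}$ is central in $\mathcal B_0$, the $I_C$-adic filtration $M\supseteq I_CM\supseteq I_C^2M\supseteq\cdots$ (which terminates) has graded pieces that are $\mathcal B_0|_C$-modules, hence of even generic rank along $C$ by Corollary~\ref{rootstack over curves}; as these ranks sum to $2$, exactly one piece has rank $2$ and the others are $0$-dimensional. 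If the rank-$2$ piece occurred beyond the first step, then $I_C^{j_0}M$ would be a pure $\mathcal B_0|_C$-module of generic rank $2$ — hence, via the equivalence $\Coh(\widehat C)\xrightarrow{\sim}\Coh(C,\mathcal B_0)$, $\mathcal F\mapsto\psi_*(\mathcal F\otimes E_0)$ coming from $\widehat{\mathcal B}_0\cong\mathcal End(E_0)$, a vector bundle on $C$ — and $M$ would be an extension of a $0$-dimensional sheaf by it; dualizing with $\mathcal Ext^\bullet(-,\omega_{\mathbb P^2})$ and using that $M$ is reflexive for this duality (being pure of dimension one) forces $M$ itself to be a vector bundle on $C$, contradicting $I_CM\neq 0$. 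Hence $I_CM=0$, so $M$ is a $\mathcal B_0|_C$-module, i.e.\ $M\cong\psi_*(\mathcal F\otimes E_0)$ for a pure rank-one sheaf on the smooth one-dimensional stack $\widehat C$, that is, a line bundle $\mathcal F$. Finally every such $M$ is stable (a destabilizing sub-$\mathcal B_0$-module would correspond to a subsheaf of $\mathcal F$ of the same generic rank, impossible on the irreducible curve $\widehat C$), so $\Upsilon^{-1}(C)$ is the set of line bundles $\mathcal F$ on $\widehat C$ with $\mathrm{ch}(\psi_*(\mathcal F\otimes E_0))=(0,2d,e)$.

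\emph{Step 2 (components and dimensions).} Write $C\cap\Delta=p_1+\dots+p_N$ with $N=dk=\deg(C)\cdot\deg(\Delta)$ distinct points. By Lemma~\ref{iterated_root_stacks}, $\widehat C\cong C_{(p_1,\dots,p_N),(2,\dots,2)}$, and then \cite[Corollary~3.2.1]{Cadman07rootstack} says every line bundle on $\widehat C$ is uniquely $\psi^*G\otimes\bigotimes_{i\in I}\mathcal O(\tfrac{p_i}{2})$ with $G\in\Pic C$ and $I\subseteq\{1,\dots,N\}$, so $\Pic\widehat C\cong\bigsqcup_I\Pic C$. Now impose the Chern-character condition. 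On $\mathbb P^2$, fixing $\mathrm{ch}(M)=(0,2d,e)$ is the same as fixing $\chi(M)=\chi(\widehat C,\mathcal F\otimes E_0)$. Here Proposition~\ref{characters of E0} is the crucial local input: at every $p_i$ exactly one of the two local summands of $E_0$ carries the tautological half-twist, so near each $p_i$ the bundle $\mathcal F\otimes E_0$ has the form $\psi^*(\,\cdot\,)\otimes\bigl(\mathcal O\oplus\mathcal O(\tfrac{p_i}{2})\bigr)$ independently of $I$; consequently $\psi_*(\mathcal F\otimes E_0)$ is locally free of rank $2$ on $C$, one has $\det E_0\cong\psi^*G_0\otimes\mathcal O\bigl(\tfrac{p_1+\dots+p_N}{2}\bigr)$ for some $G_0\in\Pic C$, and an orbifold Riemann--Roch count (equivalently $\chi(\widehat C,\mathcal V)=\deg(\psi_*\mathcal V)+2(1-g(C))$ with $\deg(\psi_*\mathcal V)=\deg\det\mathcal V-\tfrac{N}{2}$ and $\det(\mathcal F\otimes E_0)=\mathcal F^{\otimes 2}\otimes\det E_0$) yields
\[
\chi(M)=2\deg G+|I|+\deg G_0+2\bigl(1-g(C)\bigr).
\]
So fixing $e$ forces $|I|$ into a single residue class mod $2$ and, for each such $I$, forces $\deg G$; since $E_0$ is only determined up to twist by a line bundle on $\widehat C$, one normalizes it so that the residue class is the even one and $\deg G=-|I|/2$. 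The component indexed by an even $I$ is then $\{\psi^*G\otimes\mathcal O(\sum_{i\in I}\tfrac{p_i}{2}):\deg G=-|I|/2\}\cong\Pic^{-|I|/2}C$, with $I$ ranging over the $2^{N-1}$ even subsets of $\{1,\dots,N\}$.

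\emph{Step 3 (scheme structure) and the main obstacle.} The constructions above — the root stack of the universal curve over $U_d$, $\otimes E_0$, and $\psi_*$ — are functorial in families, and line bundles of the prescribed type are parametrized by a relative Picard scheme; this gives a bijective morphism $\bigsqcup_I\Pic^{-|I|/2}C\to\Upsilon^{-1}(C)$, which is an isomorphism because the deformation theory of $M$ inside the fibre coincides with that of $\mathcal F$ on the smooth curve $\widehat C$ (so the fibre is smooth of dimension $g(C)$). The step I expect to be the main obstacle is Step 2: extracting the precise local form of $E_0$ at the degeneration points from Proposition~\ref{characters of E0} and running the orbifold Riemann--Roch bookkeeping carefully enough to pin down the parity of $|I|$ — this is exactly the point where the trivial-Azumaya structure $\widehat{\mathcal B}_0\cong\mathcal End(E_0)$ interacts with the tautological half-twists $\mathcal O(\tfrac{p_i}{2})$, and a careless normalization of $E_0$ would replace the even subsets by the odd ones. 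Step 1 is formal once Corollary~\ref{rootstack over curves} and reflexivity of pure one-dimensional sheaves are in hand, and Step 3 is routine given smoothness; the identification of the index set with the even subsets is precisely what will match $\Upsilon^{-1}(C)$ with the fibres of one half $W_d^i$ of the variety of divisors over $C$.
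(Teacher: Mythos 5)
Your proposal is correct in substance and its skeleton is the same as the paper's: reduce to the root stack $\widehat C=C_{C\cap\Delta,2}$ via the equivalence $\mathcal F\mapsto\psi_*(\mathcal F\otimes E_0)$, classify line bundles on $\widehat C$ by Cadman's result, use Proposition \ref{characters of E0} as the local input at the points of $C\cap\Delta$, and fix the parity of $|I|$ by normalizing $E_0$ (this normalization is exactly why the identification is non-canonical, as the paper remarks). The differences are worth noting. First, the paper does the degree bookkeeping by descending to the $\mu_2$-equivariant geometry of an honest cyclic double cover $\widetilde C\to C$, which forces a case split on the parity of $\deg\Delta$ and the auxiliary-line trick in the odd case; you instead stay on the stack and use orbifold Riemann--Roch together with $\det E_0\cong\psi^*G_0\otimes\mathcal O\bigl(\tfrac{p_1+\cdots+p_N}{2}\bigr)$, which removes the case distinction entirely and gives the same formula ($e+d^2=\chi_0-2(1-g(C))$, so your normalization $\deg G_0=e+d^2$ is literally the paper's $\deg(\psi_*E_0)=e+d^2$). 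Second, your Step 1 (the $I_C$-adic filtration, Corollary \ref{rootstack over curves} on the graded pieces, and the $\mathcal Ext^1(-,\omega_{\mathbb P^2})$-duality argument forcing scheme-theoretic support on $C$) proves something the paper simply asserts with a citation to \cite{Lahoz15ACM}, namely that every point of $\Upsilon^{-1}(C)$ is a rank-$2$ vector bundle on $C$; the argument works, provided you note that purity of $M$ kills the tail of the filtration (so the rank-$2$ graded piece is genuinely a $\mathcal B_0|_C$-submodule) and that the dual of a sheaf pushed forward from $C$ is again pushed forward from $C$. Your Step 3 on the scheme structure is left at the level of "routine", but the paper is no more precise on this point, so this is not a gap relative to the paper's own treatment.
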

        \begin{proof}
         We will recall the description of the fiber $\Upsilon^{-1}(C)$ in \cite[Theorem 2.12]{Lahoz15ACM} and provide more details as it will be important for our purposes in later sections. A $\m B_0$-module $M\in \Upsilon^{-1}(C)$ is a rank 2 vector bundle supported on $C$, so we can restrict our attention to $\m B_0$-modules on $C.$ Note that a $\m B_0$-module $M$ that is a rank 2 vector bundle on $C$ is automatically $p$-stable, since the rank of any $\m B_0$-module on $C$ must be a multiple of $2$.
         
        
        As explained in previous section, there is a rank 2 vector bundle $E_0$ on the 2nd-root stack $\wh{C}:= C_{C\cap \Delta, 2}$ and an equivalence of categories $\psi_*:\Coh(\wh{C}, \m End(E_0) ) \xrightarrow{\sim} \Coh(C, \m B_0 )$ where $\psi:\wh{C}\to C$ is the projection morphism. That means we are looking for line bundles $\wh{L}$ on $\wh{C}$ such that $\ch(i_*\psi_*(E_0\otimes \wh{L}))= (0,2d,e)$ where $i:C\to \P^2$ is the inclusion map. It is clear that $\ch_0(i_*\psi_*(E_0\otimes \wh{L}))=0$ and $\ch_1(i_*\psi_*(E_0\otimes \wh{L}))=2d.$ To compute $\ch_2$, we use the fact that is easily computed by the Grothendieck-Riemann-Roch theorem:
        \begin{equation}
            \ch_2(i_*G)  = \deg(G) - \frac{d^2}{2}\rk(G)
        \end{equation}
        for a vector bundle $G$ on $C.$ So it is equivalent to finding all $\wh{L}$ on $\wh{C}$ such that $e = \deg(\psi_*(E_0\otimes \wh{L})- d^2  $ or $\deg(\psi_*(E_0\otimes \wh{L})) = e+d^2.$
        
        \noindent \textbf{Case 1}: When $k=\deg(\Delta)$ is even, in which case $\O_{\P_2}(k)|_C$ admits a square root $\O_{\P_2}(k/2)|_C$, we can take the the cyclic cover $\phi: \td{C}\to C $ of order 2 branched at $C\cap \Delta$ with an involution action. As explained in Example \ref{root stack = quotient stack} the root stack $\wh{C}$ is isomorphic to the quotient stack $\left[\td{C}/\mu_2\right]$. Moreover, the morphism $\phi: \td{C}\to C$ factors as $\td{C} \xrightarrow{\eta} \wh{C} \xrightarrow{\psi} C$. 
        
        Let $w_i\in \td{C}, p_i=\phi(w_i)\in C\cap \Delta $ be the ramification and branch points respectively. Recall that a line bundle $\wh{L}$ on $\wh{C}$ can be written as $\psi^*J \otimes \O\left(\sum \lambda_i\frac{p_i}{2}\right)$ such that $J$ is a line bundle on $C$, where $\lambda_i\in \{0,1\}$. As a $\mu_2$-equivariant line bundle $\wh{L} = \phi^*(J)\otimes L\left(\sum \lambda_i w_i\right)$ on $\td{C}$ (following the notation in Example \ref{root stack = quotient stack}), 
        \begin{equation}\label{eq:chern class}
        \begin{aligned}
            \c_1\left(\phi_* \left(E_0\otimes \phi^*(J)\otimes L\left(\sum \lambda_i w_i\right) \right)^{\mu_2}  \right) &= \c_1\left(J\otimes \phi_* \left(E_0\otimes L\left(\sum \lambda_i w_i\right) \right)^{\mu_2}\right)\\
            &= 2\c_1(J) + \c_1\left(\phi_* \left(E_0\otimes L\left(\sum \lambda_i w_i\right) \right)^{\mu_2}\right)
        \end{aligned}
        \end{equation}
        Note that we have the short exact sequence
        \begin{equation*}
            0\to \phi_*\left(E_0 \right)^{\mu_2} \to \phi_* \left( E_0 \otimes L\left(\sum \lambda_i w_i\right)\right)^{\mu_2} \to \bigoplus_{i\in I} \phi_*\left(E_0\otimes  L\left(\sum \lambda_i w_i\right)\otimes\O_{w_i}\right) ^{\mu_2}\to 0
        \end{equation*}
        where $I$ is the subset of $\{1,2,...,dk\}$ such that $\lambda_i=1$ for $i\in I$.
        Since the dimension of the fiber $\phi_*\left(E_0\otimes \O_{w_i}\right) ^{\mu_2}|_{p_i}\cong (E_0|_{w_i})^{\mu_2}$ at $p_i$ is 1 by Proposition \ref{characters of E0}, we have $\c_1\left(\phi_*\left(E_0\otimes \O_{w_i}\right) ^{\mu_2}\right)=1 $ which implies that
        $$\c_1\left(\phi_*\left(E_0\otimes \O_{w_i}\otimes L\left(\sum \lambda_i w_i\right)\right) ^{\mu_2}\right)=1.$$ Then the last expresssion of (\ref{eq:chern class}) becomes
        \begin{equation*}
            2\c_1(J) + \c_1\left(\phi_*(E_0)^{\mu_2}\right) + \left|I\right|.
        \end{equation*}
        where $|I|$ is its cardinality. Since $E_0$ on $\wh{C}$ is determined up to tensorization by a line bundle, this expression means that we can assume $\deg\left(\left(\phi_*E_0\right)^{\mu_2}\right) = e+d^2$. 
        
        We also see that the condition $\deg(\psi_*(E_0\otimes \wh{L})) = e+d^2$ becomes 
        \begin{equation*}
            e+d^2 = 2\deg(J)  + \deg(\psi_*(E_0)) + |I| \implies 0 = 2\deg(F) + |I|,
        \end{equation*}
        which is the same as saying that the degree of $\wh{L}$ as a line bundle on $\td{C}$ is 0. The condition $2\deg(F)+ |I|=0$ only makes sense if $|I|$ is even. We also see that for each fixed $I$, the set of line bundles satisfying the condition above is $\Pic^{-|I|/2}(C)$. Thus, $\Upsilon^{-1}(C)\cong \bigsqcup_I \Pic^{-|I|/2} (C)$ where $I$ runs over the set of even cardinality subsets of $dk$. 
        
        \noindent \textbf{Case 2}: 
        When $k=\deg(\Delta)$ is odd, we will use the trick by choosing an auxiliary line $H\subset \P^2$ which intersects $C$ transversally and $D_a:=H\cap C$ is disjoint from $D:=C\cap \Delta$. Then the line bundle $\O_C(D+D_a) \cong \O_{\P^2}(k+1)|_C$ has a natural square root $\O_{\P^2}((k+1)/2)|_C$, so we can again consider the cyclic cover $\td{C}$ branched at $C\cap (\Delta+H)$. The root stack $\overline{ C}:=C_{D+D_a,2}$ is now isomorphic to the quotient stack $\left[\td{C}/\mu_2\right]$. We again denote by $\wh{ C}$ the root stack $C_{C\cap \Delta,2}$. By Lemma \ref{iterated_root_stacks}, the stack $\overline{C}$ is isomorphic to  $C_{D,D_a,(2,2)}$ which is constructed as a fiber product, hence $C_{D, D_a,(2,2)}$ projects to $\wh{C}.$ We denote the composition by $f:\overline{C}\xrightarrow{\sim} C_{D, D_a,(2,2)}  \to \wh{C}$. 
        \begin{equation*}
            \begin{tikzcd} \td{C}\arrow[d,"q"]\arrow[rrd,"\phi"]&&\\
                    {\overline{ C}}\arrow[r,"f"]&\wh{ C}\arrow[r, "\psi"]& C
            \end{tikzcd}
        \end{equation*}
        
        Let $\wh{L}$ be a line bundle on $\wh{C}$, we want to find all such line bundles such that $\ch(\psi_*(E_0\otimes\wh{L})) = (0,2d,e)$. Recall that this is equivalent to finding $\deg(\psi_*(E_0\otimes\wh{L})) = e+d^2$. By \cite[Theorem 3.1.1 (3)]{Cadman07rootstack} (the proof there works for any vector bundle), we know that $\wh{M} \cong f_*f^*\wh{M}$ for any vector bundles on $\wh{C}$, so 
        \begin{align*}
           \psi_*(E_0\otimes \wh{L}) = \psi_*f_*\left(f^*\left(E_0\otimes \wh{L}\right)\right) = \phi_* \left(f^* \left(E_0\otimes \wh{L}\right)\right)^{\mu_2}
        \end{align*}
        As $\overline{C}\cong \left[ \td{C}/\mu_2\right]$, $f^*\left(E_0\otimes \wh{L}\right)$ on $\overline{C}$ is a $\mu_2$-equivariant vector bundle on $\td{C}$ whose induced $\mu_2$-characters at the fixed points $w_i\in D_a$ is trivial. In other words, the problem now is to find all line bundles on $\td{C}$ of the form $\phi^*(F)\otimes \O(\sum \lambda_i w_i)$ where $w_i\in \phi^{-1}(D)$ such that 
        \begin{equation*}
            \deg\left( \phi_*\left(E_0\otimes \phi^*(F)\otimes \O\left(\sum \lambda_i w_i\right)\right)^{\mu_2}\right) = e+d^2. 
        \end{equation*}
        The same argument as in Case 1 applies and implies that $2\deg(F)+|I|=0$ where $I$ is the subset of $\{1,...,dk\}$ such that $\lambda_i=1$ and $|I|$ is its cardinality. Hence, $\Upsilon^{-1}(C)$ is again isomorphic to $\bigsqcup_I\Pic^{-|I|/2}(C)$. Note that although we use the auxiliary line $H$ and the divisor $D_a$ in the argument, the result is independent of them.

        \end{proof}

        \begin{rmk}
        Note that the isomorphism $\Upsilon^{-1}(C) \cong \bigsqcup_I \Pic^{-|I|/2}(C)$ here is not canonical, as $E_0$ is only determined up to tensorization by line bundles. 
        \end{rmk}
        

        Suppose $d=1,2$. For $d<\deg(\Delta)$, if we call the line bundle $L_d: =O_{\P^2}(d)|_\Delta $ on $\Delta$, it is easy to see that $|O_{\P^2}(d)|\cong |L_d|$. Recall the group scheme $G|_{U_d}$ over $U_d$ defined in Section 2. 
        
        \begin{cor}\label{fiber is a torsor}
        With the same notation as above, for $d=1,2$, $\Upsilon ^{-1}(C)$ is a $G|_C$-torsor.
        \end{cor}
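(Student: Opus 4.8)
The plan is to equip $\Upsilon^{-1}(C)$ with a natural simply transitive action of $G|_C$. First I would reduce the statement to a count. Since $d=1,2$ and $C\in U_d$, the curve $C$ is a smooth rational curve, so $\Pic^{j}C$ is a single reduced point for every $j$. Writing $k=\deg(\Delta)$ and $N=dk$, Theorem \ref{fibers as even card subsets} then says $\Upsilon^{-1}(C)\cong\bigsqcup_{I}\{\mathrm{pt}\}$, the disjoint union running over the even-cardinality subsets $I$ of the $N$-element set $C\cap\Delta$; in particular $\Upsilon^{-1}(C)$ is a reduced finite scheme with $2^{\,N-1}$ points. On the other hand $G|_C$ is the finite constant group $(\Z/2\Z)^{\,N-1}$ of even-cardinality subsets of $C\cap\Delta$ under symmetric difference (as described after Proposition \ref{group scheme}), again of order $2^{\,N-1}$. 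Hence it suffices to produce a transitive $G|_C$-action on $\Upsilon^{-1}(C)$: on a finite set whose cardinality equals the order of the acting group, a transitive action is automatically simply transitive.

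To construct the action intrinsically, avoiding the non-canonical identification of Theorem \ref{fibers as even card subsets}, I would work through the canonical equivalence $\psi_*\colon\Coh(\wh C,\wh{\m B}_0)\xrightarrow{\sim}\Coh(C,\m B_0)$ of \cite{kuznetsov2008derived}. For an even subset $K\subseteq C\cap\Delta=\{p_1,\dots,p_N\}$, set
\begin{equation*}
\mathcal N_K:=\O\!\left(\sum_{i\in K}\frac{p_i}{2}\right)\otimes\psi^*\O_C\!\left(-\frac{|K|}{2}\right)\ \in\ \Pic(\wh C),
\end{equation*}
which is defined because $|K|$ is even. Using $\O\!\left(\frac{p_i}{2}\right)^{\otimes 2}\cong\psi^*\O_C(p_i)$ together with the fact that on the rational curve $C$ the class of $\O_C(\sum_{i\in J}p_i)$ depends only on $|J|$, a short computation gives $\mathcal N_\emptyset\cong\O_{\wh C}$ and $\mathcal N_K\otimes\mathcal N_{K'}\cong\mathcal N_{K\,\triangle\,K'}$, so $K\mapsto\mathcal N_K$ is a homomorphism $G|_C\to\Pic(\wh C)$. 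Because $\wh{\m B}_0$ acts $\O_{\wh C}$-linearly, $\wh M\otimes\mathcal N_K$ is again a $\wh{\m B}_0$-module, so I would define
\begin{equation*}
K\cdot M:=\psi_*\!\big(\wh M\otimes\mathcal N_K\big),\qquad M\in\Upsilon^{-1}(C),
\end{equation*}
where $\wh M$ is the $\wh{\m B}_0$-module with $\psi_*\wh M\cong M$; the multiplicativity of $K\mapsto\mathcal N_K$ makes this a $G|_C$-action, and it does not refer to any choice of $E_0$. To see it preserves $\Upsilon^{-1}(C)$, write $\wh M\cong E_0\otimes\wh L$ as in the proof of Theorem \ref{fibers as even card subsets}; the degree computation there (equation \eqref{eq:chern class} and the following lines) shows that the $+|K|$ contributed by the factors $\O(\frac{p_i}{2})$ is cancelled by the $-|K|$ contributed by $\psi^*\O_C(-|K|/2)$, so $\ch(K\cdot M)=\ch(M)=(0,2d,e)$ and $\Upsilon(K\cdot M)=C$; moreover $K\cdot M$ is again a rank $2$ bundle on $C$, hence $p$-stable, so it lies in $\Upsilon^{-1}(C)$.

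For transitivity, after choosing $E_0$ with $\deg\psi_*E_0=e+d^2$ as in the proof of Theorem \ref{fibers as even card subsets}, every $M\in\Upsilon^{-1}(C)$ takes the form $M_I:=\psi_*(E_0\otimes\mathcal N_I)$ for a unique even subset $I$ (in that normalization the line bundle $\wh L$ attached to $M$ is precisely $\mathcal N_I$), and then $K\cdot M_I=\psi_*(E_0\otimes\mathcal N_I\otimes\mathcal N_K)=\psi_*(E_0\otimes\mathcal N_{I\,\triangle\,K})=M_{I\,\triangle\,K}$, which is manifestly transitive. Together with the cardinality count this yields simple transitivity, i.e.\ $\Upsilon^{-1}(C)$ is a $G|_C$-torsor (the $\m B_0$-module counterpart of the pseudo-torsor structure of Proposition \ref{pseudo torsor}). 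The step needing genuine care is the root-stack bookkeeping, in particular the identity $\mathcal N_K\otimes\mathcal N_{K'}\cong\mathcal N_{K\,\triangle\,K'}$: this really uses $d<\deg(\Delta)$ and $C\cong\P^1$ (so that the relevant linear system is $|L_d|$ and $\O_C(\sum p_i)$ depends only on the number of points), and it is the requirement that $K$ have \emph{even} cardinality --- needed for the correction term $\psi^*\O_C(-|K|/2)$ to exist --- that pins the acting group down to $G|_C$ rather than all of $(\Z/2\Z)^{N}$. Once the normal form of \cite[Corollary 3.2.1]{Cadman07rootstack} is in hand, the rest is formal.
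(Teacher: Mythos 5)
Your proof is correct and follows essentially the same route as the paper: you define the $G|_C$-action by tensoring the corresponding $\wh{\m B}_0$-module with $\O\bigl(\sum_{i\in K}\tfrac{p_i}{2}\bigr)$ twisted by a pulled-back degree correction, and deduce simple transitivity from the explicit description of $\Upsilon^{-1}(C)$ in Theorem \ref{fibers as even card subsets} together with the triviality of $\Pic^a(C)$ for $d=1,2$. Your extra verifications (the homomorphism property $\m N_K\otimes\m N_{K'}\cong\m N_{K\triangle K'}$ and the cardinality count) are fine but not needed beyond what the paper's argument already gives, since your normal form $K\cdot M_I=M_{I\triangle K}$ is simply transitive on its own.
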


        \begin{proof}
        For $d=1,2$, the Picard group $\Pic^a(C)$ is trivial for any $a$. Let $\sum p_i= C\cap \Delta$ be the divisor corresponding to $C$ under $|\O_{\P^2}(d)|\cong |L_d|.$ We can denote a closed point of $G|_C$ by $\sum (\lambda_i,p_i)$ where $\lambda_i\in \Z/2\Z$ and $\sum \lambda_i= 0$. Since any $M\in \Upsilon^{-1}(C)$ can be written as $\psi_*\wh{M}$, the group $G|_C$ acts on $\Upsilon^{-1}(C)$ by 
        \begin{equation*}
            \left( \sum (\lambda_i,p_i)\right) \cdot M = \psi_* \left( \wh{M}\otimes \O\left(\sum\lambda_i\frac{p_i}{2}\right)\otimes h_C^{-\frac{1}{2}\sum \lambda_i} \right)
        \end{equation*}
        where $h_C= \psi^*\O_C(1)$. To see that $G|_C$ acts simply transitively, fix $E_0$ such that for $M\in \Upsilon^{-1}(C)$, $M\cong \psi_*(E_0\otimes \wh{L})$, then the action becomes
        \begin{equation*}
            \left( \sum (\lambda_i,p_i)\right) \cdot M = \psi_* \left( E_0\otimes \wh{L}\otimes \O\left(\sum\lambda_i\frac{p_i}{2}\right)\otimes h_C^{-\frac{1}{2}\sum \lambda_i} \right)
        \end{equation*}
        which is clearly simply transitive by the description of $\Upsilon^{-1}(C)$ in the proof of Theorem \ref{fibers as even card subsets}.
        \end{proof}

\section{Moduli spaces of $\mathcal{B}_0$-modules and special subvarieties of Prym varieties}
In this section, we will construct the rational map from the moduli space $\f M_{d,e}$ to the Prym variety $\Prym(\td{\Delta},\Delta)$. The key observation is that our $\m B_0$-modules are supported on plane curves $C$ which intersect the discriminant curve $\Delta$ in finitely many points. The $\m B_0$-modules restrict to a representation of the even Clifford algebra over each of these points. These representations then define a lift of the intersection $C\cap \Delta\subset \Delta$ to $\td{\Delta}$, which will be a point in the variety of divisors lying over the linear system $|\O_{\P^2}(C)|_{\Delta}|$, and maps to $\Prym(\td{\Delta},\Delta)$. So we begin by studying the representation theory for our purpose. 

\subsection{Representation theory of degenerate even Clifford algebras }
In this subsection, we will restrict our attention to the fiber of the sheaf of the even Clifford algebras $\m B_0$ over a fixed $p\in C\cap\Delta$, which is a $\C$-algebra, denoted by $A$. Note that all the fibers over the points in $C\cap\Delta$ are isomorphic as $\C$-algebra since the fiber $\m B_0|_p$ over a point $p\in C\cap \Delta$ is defined by a degenerate quadratic form of corank 1 and all quadratic forms of corank 1 are isomorphic over $\C$. Let $V$ be a vector space of dimension 3, and $q\in S^2V^*$ a quadratic form of rank 2. The even Clifford algebra is defined as a vector space $\C\oplus \wedge^2 V$ together with an algebra structure defined as follows. First, we can always find a basis $\{e_1,e_2,e_3\}$ of $V$ such that $q$ is represented as the matrix $\textrm{diag}(1,1,0)$ and we denote by $\{1,x:= ie_1\wedge e_2,y := ie_2\wedge e_3,z:=e_1\wedge e_3\}$ the basis of $\C\oplus \wedge^2 V$. The relations are given by 
\begin{equation}
    x^2 =1, y^2=z^2=0, xy=-z, xz=-y, xy = -yx, xz=-zx,  yz=zy=0.
\end{equation}

Since $A$ is a finite-dimensional associative algebra, we can understand it via quivers and path algebras. We refer the reader to \cite{assem_simson_skowronski06elements} for the basics of quivers and path algebras.

\begin{prop}\label{degenerate even Clifford = path algebra}
The algebra $A$ is isomorphic to the path algebra associated to the following quiver $Q$
\begin{equation}
\begin{tikzcd}
        +\arrow[r,bend left,"\a"]&-\arrow[l,bend left,"\b"]
\end{tikzcd}
\end{equation}
with relations $\a\b=\b\a= 0.$ 
\end{prop}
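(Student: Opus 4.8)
The plan is to build an explicit isomorphism by exhibiting a compatible basis on each side. Write $B := \C Q/(\alpha\beta,\beta\alpha)$ for the path algebra with the stated relations. Since every path in $Q$ of length $\geq 2$ factors through $\alpha\beta$ or $\beta\alpha$, the algebra $B$ is spanned over $\C$ by the two trivial paths $e_+, e_-$ (orthogonal idempotents with $e_+ + e_- = 1$) together with the two arrows $\alpha = e_-\alpha e_+$ and $\beta = e_+\beta e_-$; in particular $\dim_\C B = 4$, which already matches $\dim_\C A = 4$. So it suffices to produce a surjective $\C$-algebra homomorphism $B\to A$.

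On the $A$-side, the relation $x^2 = 1$ shows that $e_\pm := \frac12(1\pm x)$ are orthogonal idempotents summing to $1$. The anticommutation relations $xy = -yx$ and $xz = -zx$ show that left and right multiplication by $x$ act oppositely on $y$ and $z$, whence $e_+ y e_+ = e_- y e_- = 0$ and likewise for $z$; thus each of $y,z$ splits into a component in the corner $e_-Ae_+$ and one in $e_+Ae_-$. Using $z = -xy$ (equivalently $z = yx$), a short computation with the multiplication table identifies these components: setting $y_1 := e_- y e_+ = \frac12(y+z)$ and $y_2 := e_+ y e_- = \frac12(y-z)$, one recovers $y = y_1 + y_2$ and $z = y_1 - y_2$, so $\{e_+, e_-, y_1, y_2\}$ is again a basis of $A$, with $y_1 = e_- y_1 e_+$ and $y_2 = e_+ y_2 e_-$.

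Next I would check that the quadratic monomials vanish: since $y_1$ and $y_2$ sit in opposite off-diagonal corners relative to $e_\pm$ and $y^2 = yz = zy = z^2 = 0$ in $A$, a direct computation gives $y_1^2 = y_2^2 = y_1 y_2 = y_2 y_1 = 0$. Consequently the assignment $e_+\mapsto e_+$, $e_-\mapsto e_-$, $\alpha\mapsto y_1$, $\beta\mapsto y_2$ is compatible with all the defining relations of $B$ and hence extends to a $\C$-algebra homomorphism $B\to A$; it is surjective because its image contains the basis $\{e_+, e_-, y_1, y_2\}$ of $A$. A surjective homomorphism between two $4$-dimensional algebras is an isomorphism, so the proposition follows. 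I do not anticipate a genuine obstacle here — the only step requiring care is the bookkeeping in the second paragraph, namely verifying that $y$ and $z$ span the same plane as $y_1, y_2$ (where the relation $z = -xy$ is essential) and that the resulting four elements are linearly independent; everything else is routine manipulation of the given multiplication table.
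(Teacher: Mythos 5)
Your proof is correct, and the map you build is literally the paper's map (\ref{isom with quiver algebra}): your $e_\pm=\tfrac12(1\pm x)$, $y_1=\tfrac12(y+z)$, $y_2=\tfrac12(y-z)$ are exactly the images of $e_+,e_-,\a,\b$ there, and your computations of the corner identities $y_1=e_-y_1e_+$, $y_2=e_+y_2e_-$ and of $y_1^2=y_2^2=y_1y_2=y_2y_1=0$ all check out against the multiplication table. Where you differ is in the wrapper around this map, and in fact your route is the ``direct check'' that the paper explicitly flags as possible in the remark immediately following the proposition, rather than the proof it actually writes out. The paper first derives the quiver intrinsically: it lists all idempotents, exhibits $\{\tfrac12(1+x),\tfrac12(1-x)\}$ as a complete set of primitive orthogonal idempotents, shows $\rad(A)=(y,z)$ with square zero, checks that $A$ is basic and connected, reads off the arrows from the corner spaces $e_-(\rad(A)/\rad(A)^2)e_+=\C(y+z)$ and $e_+(\rad(A)/\rad(A)^2)e_-=\C(y-z)$, and only then argues that the kernel of the resulting surjection $\C Q\to A$ is exactly $J=(\a\b,\b\a)$ because every longer path contains $\a\b$ or $\b\a$. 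You instead bound $\dim \C Q/(\a\b,\b\a)\le 4$ by the same observation about long paths, verify the relations to get a homomorphism onto the $4$-dimensional algebra $A$, and conclude by a dimension count. The paper's structure-theoretic detour buys an explanation of why this particular quiver and these relations must appear (it is the Gabriel quiver of $A$), which is useful context but not logically necessary; your version is shorter and self-contained, and the dimension count is a clean substitute for the kernel identification. No gaps.
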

\begin{proof}
We begin by finding the idempotents i.e. elements $b$ in $A$ such that $b^2=b$. This is achieved by setting up the equations 
\begin{equation*}
    (a_0+a_1x+ a_2y +a_3z)^2 = (a_0+a_1x+ a_2y +a_3z) 
\end{equation*}
and solving the equations in $a_0,a_1,a_2,a_3$. It is easy to check that the idempotents are:
\begin{equation*}
    0,1, \frac{1}{2}(1\pm x) + a_2 y+a_3 z
\end{equation*}
for $a_2,a_3\in \C$ and that
\begin{equation*}
    e_+ := \frac{1}{2}(1+x), \quad e_-:=\frac{1}{2}(1-x)
\end{equation*}
is a complete set of primitive orthogonal idempotents of $A$. From the description of idempotents, it is clear that the only central idempotents are $0, 1$, so $A$ is connected. 

We also need to compute the radical of $A$. Observe that the ideal $I=(y,z)$ is clearly nilpotent, i.e. $I^2=0$ and $A/I \cong \C[x]/(x^2-1)\cong \C\oplus \C$. By \cite[Corollary 1.4(c)]{assem_simson_skowronski06elements}, this implies that $\rad(A)=I=(y,z).$ It also follows that $A$ is a basic algebra by \cite[Proposition 6.2(a)]{assem_simson_skowronski06elements}.

The arrows between $+ \to -$ of the associated quiver is described by 
\begin{equation*}
    e_-(\rad(A)/\rad(A)^2)e_+ =\frac{1}{2}(1-x)(y,z)\frac{1}{2}(1+x) = \C(y+z).
\end{equation*}
Similarly, the arrows between $-\to +$ is described by 
\begin{equation*}
    e_+(\rad(A)/(\rad(A)^2))e_- = \C(y-z)
\end{equation*}
and the arrows between $-\to -$ and $+\to +$
\begin{equation}
    e_-(\rad(A)/(\rad(A)^2))e_- = e_+(\rad(A)/(\rad(A)^2))e_+ = 0.
\end{equation}
Hence, the associated quiver $Q$ \cite[Definition 3.1]{assem_simson_skowronski06elements} of $A$ is given by 
\begin{equation*}
\begin{tikzcd}
        +\arrow[r,bend left,"\a"]&-\arrow[l,bend left,"\b"]
\end{tikzcd}
\end{equation*}
and we obtain a surjective map $    \C Q\to A$ from the path algebra $\C Q$ associated to the quiver $Q$ to $A$ by sending the generators 
\begin{equation}\label{isom with quiver algebra}
    e_+ \mapsto \frac{1}{2}(1+x) , \quad e_-\mapsto \frac{1}{2}(1-x), \quad \a \mapsto \frac{1}{2}(y+z) , \quad \b \mapsto \frac{1}{2}(y-z).
\end{equation}
It is easy to see that $\a\b=\b\a=0$, and since any other paths of higher length must contain a factor of $\a\b$ or $\b\a$, we see that the kernel of $kQ\to A$ must be $J= (\a\b,\b\a)$. Therefore, we have an isomorphism $\C Q/J\cong A$.

\end{proof}
\begin{rmk}
We can prove the isomorphism in Proposition \ref{degenerate even Clifford = path algebra} directly by checking that the map defined in (\ref{isom with quiver algebra}) is indeed an isomorphism of $\C$-algebras. The detail with the idempotents and the radical ideal in the proof above is just to display a more systematic approach. 
\end{rmk}

Since we are mainly interested in $\m B_0$-modules that are locally free of rank 2, the fiber of such module over $p\in C\cap\Delta$ is a representation of $A$ on $\C^2$. In light of the interpretation of $A$ as a path algebra, we can easily classify all the isomorphism classes of representations on $\C^2$. The isomorphism classes of representations of $A\cong \C Q/J$ on $\C^2$ are listed as follows: 
\begin{enumerate}
    \item \begin{equation*}\label{type 1 rep}
        \begin{tikzcd}
                \C \arrow[r,bend left,"1"]&\C\arrow[l,bend left,"0"] 
        \end{tikzcd}
    \end{equation*}
    
    \item \begin{equation*}\label{type 2 rep}
        \begin{tikzcd}
                \C \arrow[r,bend left,"0"]&\C\arrow[l,bend left,"1"] 
        \end{tikzcd}
    \end{equation*}
    
    \item \begin{equation*}
        \begin{tikzcd}
                \C \arrow[r,bend left,"0"]&\C\arrow[l,bend left,"0"] 
        \end{tikzcd}
    \end{equation*}
    
    \item \begin{equation*}
        \begin{tikzcd}
                \C^2 \arrow[r,bend left,"0"]&0\arrow[l,bend left,"0"] 
        \end{tikzcd}
    \end{equation*}
    
    \item \begin{equation*}
        \begin{tikzcd}
                0 \arrow[r,bend left,"0"]&\C^2\arrow[l,bend left,"0"] 
        \end{tikzcd}
    \end{equation*}
    
\end{enumerate}

\subsection{Construction}
    Recall the geometric set-up: we have a rank 3 bundle $F$ on $\P^2$ and an embedding of a line bundle $q: L \hookrightarrow S^2F^\vee$. This defines a conic bundle $Q\subset \P(F)$ as the zero locus of $q\in H^0(\P^2, S^2F^\vee \otimes L^\vee)\cong H^0(\P(F),\O_{\P(F)/\P^2}(2)\otimes (\pi')^* L^\vee)$ in $\P(F)$ where we denote $\pi': \P(F) \to \P^2$. The discriminant curve is assumed to be smooth and denoted by $\Delta$. As an $\O_{\P^2}$-module, the sheaf of even Clifford algebras $\m B_0$ on $\P^2$ is 
\begin{equation*}
    \m B_0\cong \O_{\P^2 }\oplus \wedge^2 F \otimes L. 
\end{equation*}

We will restrict our attention to $\m B_0$-modules supported on a degree $d$ smooth curve $C\subset \P^2$ which intersects $\Delta$ transversely. Given such a $\m B_0$-module $M$ on $C$, for each $p\in C\cap \Delta$ we consider the vector subspace 
\begin{equation*}
    K := \ker( \m B_0|_p\to \m End(M)|_p).
\end{equation*}
As we will see in Proposition \ref{vector subspaces} (1), $K$ is a vector subspace of $ \wedge^2 F|_p\otimes L|_p$. The natural isomorphisms $w:\wedge^2 F \xrightarrow{\sim}\det(F)\otimes F^\vee$ and $F\xrightarrow{\sim} (F^\vee)^\vee$ give rise to another vector space
\begin{equation*}
    K':= \ker( F|_p \xrightarrow{\sim} (F^\vee)^\vee|_p \xrightarrow{w_p^\vee\otimes \det(F)|_p\otimes L|_p }  K^\vee \otimes (L\otimes \det(F))|_p)
\end{equation*}
where $w_p: K\hookrightarrow (\wedge^2F\otimes L)|_p \to (\det(F)\otimes F^\vee \otimes L)|_p$ is the composition of the inclusion map and the isomorphism $w$ restricted to $p.$ Hence, $\P(K')$ is a linear subspace in $\P(F|_p)$. In the light of Proposition \ref{vector subspaces}, $K'$ is the two-dimensional vector space in $F|_p$ that corresponds to the line $K$ in $\wedge^2 F|_p$ (identified with ($\wedge^2 F\otimes L)|_p$). 
\begin{prop} \label{vector subspaces}

\end{prop}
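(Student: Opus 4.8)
I take Proposition~\ref{vector subspaces} to assert that, for a locally free rank-$2$ $\m B_0$-module $M$ on $C$ and a point $p\in C\cap\Delta$: (1) $K=\ker(\m B_0|_p\to\m End(M)|_p)$ is a one-dimensional subspace of $\wedge^2F|_p\otimes L|_p\subset\m B_0|_p$; and (2) $K'$ is two-dimensional, so $\P(K')\subset\P(F|_p)$ is a line, and this line is one of the two irreducible components of the degenerate conic $Q|_p$ — both components being realized as $M$ ranges over $\Upsilon^{-1}(C)$. The plan is to reduce everything to the presentation of $A=\m B_0|_p$ as the path algebra of Proposition~\ref{degenerate even Clifford = path algebra} and to the classification of its two-dimensional representations listed there, and then to run two short computations in the basis $\{e_1,e_2,e_3\}$ of $V=F|_p$ for which $q=\textrm{diag}(1,1,0)$.

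The first step is to determine which of the five representation types $M|_p$ can have. As in the proof of Theorem~\ref{fibers as even card subsets}, write $M\cong\psi_*(E_0\otimes\wh L)$ for a line bundle $\wh L$ on $\wh C=C_{C\cap\Delta,2}$, and present $\wh C$ locally at $p$ as $[\td C/\mu_2]$ via the double cover $\phi\colon\td C\to C$ branched at $p$, with $w\in\td C$ over $p$ (Example~\ref{root stack = quotient stack}). Proposition~\ref{characters of E0}, applied to $E_0\otimes\wh L$, shows that near $w$ this rank-$2$ bundle splits into two line bundles, exactly one of which is $\mu_2$-invariant at $w$. Unwinding $\psi_*=(\phi_*(-))^{\mu_2}$ as in that proof then shows that $e_+M|_p$ and $e_-M|_p$ are each one-dimensional and that one of the arrows $\a,\b$ acts on $M|_p$ as an isomorphism $e_\pm M|_p\xrightarrow{\sim}e_\mp M|_p$ while the other acts as $0$. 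Hence $M|_p$ is of type (1) or (2) in the list after Proposition~\ref{degenerate even Clifford = path algebra}; types (3)--(5) are excluded, and the two surviving possibilities are interchanged by $\wh L\mapsto\wh L\otimes\O(\frac{p}{2})$, which swaps the $\mu_2$-characters of the two summands at $w$.

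What remains is linear algebra. For a type-(1) (resp.\ type-(2)) representation, computing $K$ directly from its matrices and the identification (\ref{isom with quiver algebra}) gives $K=\C\cdot(y-z)$ (resp.\ $K=\C\cdot(y+z)$); since $y,z$ have no scalar component, $K$ is a line in $\wedge^2F|_p\otimes L|_p\subset\m B_0|_p$, which is (1). For (2), observe that $y-z=-(e_1-ie_2)\wedge e_3$ and $y+z=(e_1+ie_2)\wedge e_3$ are decomposable, so under $\wedge^2F\xrightarrow{\sim}\det F\otimes F^\vee$ the kernel $K'$ of the induced map $F|_p\to K^\vee\otimes(L\otimes\det F)|_p$ is exactly the $2$-plane spanned by the two wedge factors, namely $\langle e_1-ie_2,\,e_3\rangle$ (resp.\ $\langle e_1+ie_2,\,e_3\rangle$). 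Finally, factoring the rank-$2$ form as $q=(X_1-iX_2)(X_1+iX_2)$ identifies these two $2$-planes with the two lines comprising $Q|_p=\{q=0\}\subset\P(F|_p)$, so $\P(K')$ is an irreducible component of $Q|_p$; by the previous paragraph both components occur.

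The step I expect to be the real obstacle is the second paragraph — deciding which representation types arise. This is where the geometric hypotheses enter (local freeness of $M$, transversality of $C\cap\Delta$, the Azumaya/root-stack structure of $\m B_0$), and the $\mu_2$-equivariant bookkeeping on $\td C$ has to be carried out with care: it is precisely the fact that exactly one summand of $E_0\otimes\wh L$ is invariant at $w$ (Proposition~\ref{characters of E0}) that forces $\dim e_\pm M|_p=1$ and forces the radical of $A$ to act nontrivially on $M|_p$, hence forces $K$ to be one-dimensional rather than two-dimensional. Everything after that is a bounded computation in a four-dimensional algebra.
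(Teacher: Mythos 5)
Your proof is correct, and its endgame (kernel equal to $\C\langle y-z\rangle$ or $\C\langle y+z\rangle$, decomposability of these bivectors, identification of the corresponding $2$-planes with the two isotropic planes of $q|_p$) is the same linear algebra as in the paper. Where you genuinely diverge is in ruling out representation types (3)--(5): the paper does this in Proposition~\ref{type 1-2 argument} by a direct computation in $\m B_0\otimes\O_{C,p}$ with the Gram matrix $(f_{ij})$ of $q$, where transversality of $C\cap\Delta$ enters explicitly through the invertibility of $f_{33}'$ (the discriminant vanishes only to first order along $C$), producing the contradiction $\rho^0_n(x^2)=(f_{11}f_{22})|_p\neq 0$. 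You instead use the root-stack description $M\cong\psi_*(E_0\otimes\wh{L})$ from Theorem~\ref{fibers as even card subsets} together with Proposition~\ref{characters of E0} (which does apply to $E_0\otimes\wh{L}$, since $\m End(E_0\otimes\wh{L})\cong\m End(E_0)$; the local splitting itself is Proposition~\ref{locally direct sum}), so for you local freeness and transversality enter only through the equivalence $\Coh(\wh{C})\simeq\Coh(C,\m B_0)$. This is legitimate and non-circular, since both inputs precede the proposition, but note that the ``unwinding'' you defer is not actually carried out in the proof of Theorem~\ref{fibers as even card subsets}: it is precisely the matrix computation the paper performs later in Proposition~\ref{morphism of torsors}, where with the basis $f_1=e_1$, $f_2=te_2$ one sees that at $s=0$ the image of $\psi_*\m End(E_0)$ in $\m End(\psi_*E_0)|_p$ is three-dimensional, the kernel is the line spanned by $(b+c)|_p$ (a radical element, one of the two arrows up to scalar), and $x$ acts with two one-dimensional eigenspaces --- whence type (1) or (2). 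So your route buys an argument that doubles as the equivariance computation needed for Proposition~\ref{morphism of torsors} (twisting $\wh{L}$ by $\O\left(\frac{p}{2}\right)$ swaps which arrow dies), at the cost of presupposing the structure theorem $M=\psi_*(E_0\otimes\wh{L})$ and of having to do that $\mu_2$-equivariant bookkeeping explicitly rather than asserting it; the paper's argument is more self-contained at the level of the Clifford algebra and makes the role of the simple vanishing of the discriminant visible, without invoking the root stack at all.
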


\begin{enumerate}
    \item $K\subset \wedge^2 F|_p\otimes L|_p\subset \O|_p \oplus \wedge^2 F|_p\otimes L|_p$ ;
    \item $\dim (K) = 1$ and $\dim (K')=2$;
    \item The line $\P(K')\subset \P(F|_p)$ is one of the two irreducible components of the degenerate conic $Q|_p\subset \P(F|_p)$.
\end{enumerate}
\begin{proof}
First of all, we can always choose a basis $\{e_1,e_2,e_3\}$ of $F|_p$ and a trivialization $i: L|_p\cong \C $ so that $q|_p$ is represented by $\textrm{diag}(1,1,0)$. The trivialization $i$ induces an isomorphism of $\C$-algebras $\m B_0|_p\cong \O|_p\oplus \wedge^2 F|_p$  where the latter is generated by $1\in \O|_p$ and $\{x:= ie_1\wedge e_2,y := ie_2\wedge e_3,z:=e_1\wedge e_3\}\subset \wedge^2 F|_p$ with relation 
\begin{equation*}
    x^2 =1, y^2=z^2=0, xy=-z, xz=-y, xy = -yx, xz=-zx,  yz=0.
\end{equation*}

The irreducible components of $Q|_p\subset \P(F|_p)$ are given by the projectivization of the isotropic planes in $F|_p$ with respect to $q$. If we write a vector $v\in F|_p$ as $\sum_{i=1}^3a_ie_i$, then the two isotropic planes are given by the two equations 
\begin{equation}
    a_1+ia_2=0 , \quad a_1-ia_2=0
\end{equation}
which correspond to the lines in $\wedge^2 F|_p $
\begin{equation}
    \C \langle ie_2\wedge e_3+e_1\wedge e_3\rangle=\C\langle y+z\rangle , \quad \C \langle ie_2\wedge e_3- e_1\wedge e_3 \rangle =\C\langle y-z\rangle.
\end{equation}
To prove all the claims, it suffices to show that $K\subset \m B_0|_p$ corresponds to one of the these lines in the subspace $\wedge^2 F|_p\subset \O|_p\oplus \wedge^2 F|_p$. Indeed, then $K'$ will correspond to one of the isotropic planes. 

Recall that with the choice of basis $\{1,x,y,z\}$ of $\O|_p\oplus \wedge^2 F|_p$, we have an isomorphism $\m \C  Q/J \xrightarrow{\sim}\O|_p\oplus \wedge^2 F|_p$:
\begin{equation*}
    e_+ \mapsto \frac{1}{2}(1+x) , \quad e_-\mapsto \frac{1}{2}(1-x), \quad \a \mapsto \frac{1}{2}(y+z) , \quad \b \mapsto \frac{1}{2}(y-z).
\end{equation*}
Then the kernel of $\m B_0|_p\to \m End (M)|_p$ can be computed by the composition
\begin{equation*}
\C Q/J\xrightarrow{\sim}\O|_p\oplus \wedge^2 F|_p\xrightarrow{\sim }\m B_0|_p\to \m End(M)|_p
\end{equation*}
which is a representation of the path algebra $\C Q/J$. As we will see in Proposition \ref{type 1-2 argument}, the isomorphism classes of the representation of $\C Q/J$ on $\C^2$ in this case must be either type (1) and (2). Taking this as granted for a moment, we have: 

\begin{enumerate}
    \item For type (1), the kernel of $\C Q/J\xrightarrow{\sim }\m B_0|_p\to \m End(M)|_p$ is $\C\langle\beta\rangle $ which corresponds to $K=\C\langle y-z\rangle\subset \O|_p\oplus \wedge^2 F|_p. $
    \item For type (2), the kernel of $\C Q/J\xrightarrow{\sim }\m B_0|_p\to \m End(M)|_p$ is $\C\langle\a\rangle $ which corresponds to $K=\C\langle y+z\rangle \subset \O|_p\oplus \wedge^2 F|_p$.
\end{enumerate}
All the claims follow immediately. 
\end{proof}

\begin{rmk}
The trivialization $i:L|_p\cong \C$ does not cause any ambiguity in the identifications; as we are only interested in identification of vector subspaces, other trivializations will only differ by a scalar multiplication. 
\end{rmk}

\begin{prop}\label{type 1-2 argument}
    The representation of $\m B_0|_p$ obtained from a $\m B_0$-module $M$ as the fiber $M|_p$ over $p\in C\cap \Delta$ must have isomorphism class of either type (\ref{type 1 rep}) or type (\ref{type 2 rep}).
    \end{prop}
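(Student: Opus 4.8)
The plan is to rule out the three degenerate representation types (3), (4), (5) from the classification, leaving only types (1) and (2). The key input is that $M$ is a \emph{locally free} $\m B_0$-module of rank $2$ supported on the smooth curve $C$, together with the structural information about $\m B_0$ near a point $p\in C\cap\Delta$ coming from the Azumaya algebra $\wh{\m B}_0$ on the root stack and the splitting $E_0\cong L_1\oplus L_2$ of Proposition \ref{characters of E0}. First I would observe that types (4) and (5) are impossible for a simple dimension reason: in those representations one of the idempotents $e_+,e_-$ acts as zero, so the corresponding summand $M=e_+M\oplus e_-M$ would be concentrated entirely in one factor. But the fiber $M|_p$ must be the restriction of a locally free rank $2$ $\m B_0$-module, and I would trace through the equivalence $\Coh(\wh C,\m End(E_0))\xrightarrow{\sim}\Coh(C,\m B_0)$ to show that the two idempotents of $A=\m B_0|_p$ correspond under this equivalence to the two eigen-line-summands of $E_0$ at the ramification point $w$ over $p$; since both $L_1|_w$ and $L_2|_w$ are nonzero, both $e_+M|_p$ and $e_-M|_p$ are nonzero, so the dimension vector is $(1,1)$, immediately excluding (4) and (5).

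The heart of the argument is then excluding type (3), where both arrows $\a,\b$ act as zero — equivalently where the whole radical $\rad(A)=(y,z)$ kills $M|_p$. I would argue that this cannot happen because $M$ is a \emph{vector bundle} on $C$ (not just a torsion sheaf on $\wh C$), which forces the $\m B_0$-action to be ``as nondegenerate as possible'' transversally to $\Delta$. Concretely: via Proposition \ref{characters of E0} the invariant part $(E_0|_w)^{\mu_2}$ is one-dimensional, so in the local picture $\psi_*(E_0\otimes\wh L)$ near $p$ the module $M$ looks like $\psi_*$ of a rank $2$ bundle on the root stack $\wh C=[\td C/\mu_2]$, and the $y,z$ (equivalently $\a,\b$) action is exactly the ``off-diagonal'' part that interchanges the two $\mu_2$-characters. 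If both $\a$ and $\b$ acted as zero on $M|_p$, then locally $M$ would split as a direct sum of a sheaf supported on the component $\P(K'_+)$ and one on $\P(K'_-)$ of the degenerate conic — but I would show this contradicts either the local freeness of $M$ on $C$ or the fact that $\psi_*\m End(E_0)\cong\m B_0$ is \emph{not} a matrix algebra at $p$ (the same dichotomy exploited in the proof of Proposition \ref{characters of E0}). Said differently, type (3) is the unique semisimple representation with dimension vector $(1,1)$, and a semisimple fiber would mean the $\m B_0$-module structure is locally (near $p$) a direct sum, contradicting that $M$ comes from a line bundle tensored with the \emph{indecomposable-at-$w$} bundle $E_0$.

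I expect the main obstacle to be making the last step — the exclusion of type (3) — rigorous rather than heuristic, since ``$M$ is a vector bundle on $C$'' must be converted into a precise statement about how $\rad(A)$ acts on $M|_p$. The cleanest route is probably to use the local model explicitly: write $R=\O_{C,p}$ with uniformizer $t$ cutting out $p$, so that $\m B_0$ over $\Spec R$ is, after the root-stack/cyclic-cover description, identified with a concrete $R$-algebra (a specialization of the generic even Clifford algebra with $q=\mathrm{diag}(1,1,t)$, say), and check directly that a rank $2$ locally free module over this $R$-algebra must restrict at $t=0$ to a representation in which exactly one of $\a,\b$ is invertible. This is a finite, if slightly fiddly, linear-algebra computation over $R$, and once it is in place types (1) and (2) are precisely the two possibilities according to which of $\a,\b$ survives. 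I would close by noting that this dichotomy — type (1) versus type (2) — is exactly what Proposition \ref{vector subspaces} needs in order to pin down $K'$ as one of the two components of the degenerate conic, which is the geometric payoff motivating the whole subsection.
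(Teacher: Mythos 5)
Your proposed ``cleanest route'' --- working over the discrete valuation ring $R=\O_{C,p}$, putting the quadratic form in the shape $\mathrm{diag}(\text{unit},\text{unit}, t\cdot\text{unit})$ (this is precisely where transversality of $C\cap\Delta$ enters, since it forces the third diagonal entry to vanish to order exactly one), and checking that a rank~$2$ locally free module over the resulting $R$-algebra has fiber of type (1) or (2) --- is exactly the paper's proof, and it disposes of types (3), (4), (5) uniformly rather than in two separate steps: in all three degenerate types the radical generators $y,z$ kill the fiber, so $\rho(y)=tP_y$, $\rho(z)=tP_z$; the Clifford relation expressing $yz$ as $f_{33}\,x$ plus terms divisible by $t$, with $f_{33}$ of valuation exactly one, then yields $\rho(x)\equiv 0 \pmod t$, which contradicts the fact that $\rho(x^2)|_p=(f_{11}f_{22})|_p\neq 0$. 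This cancellation of one factor of $t$ against $f_{33}$ is the entire content of your ``fiddly linear-algebra computation,'' and you never identify it; as written, the proposal does not yet contain an argument that excludes type (3).

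Your two other mechanisms do not fill that hole. The dimension-vector argument for types (4) and (5) is correct in outline (and is subsumed by the computation above), but the heuristic you lead with for type (3) is flawed: $E_0$ is not ``indecomposable at $w$'' --- by Proposition \ref{locally direct sum} it splits locally on the root stack as $L_1\oplus L_2$, and what would actually need proving is indecomposability of $\psi_*(E_0\otimes\wh L)$ as a $\m B_0$-module near $p$; moreover the implication ``semisimple fiber at $p$ implies the $\m B_0$-module splits locally'' is unjustified, and ``a sheaf supported on a component of the degenerate conic'' does not make sense over a punctured neighborhood of $p$ in $C$, where the conics are smooth and irreducible. So the burden really does fall on the explicit local computation, i.e.\ on carrying out the paper's argument (compare the use of the same local model in the proof of Proposition \ref{morphism of torsors}).
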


    \begin{proof}
        Fix $p\in C\cap \Delta$. Let $n=3,4,5$ and $M_n$ be a $\m B_0$-modules such that its fiber over $p$ is a $\m B_0$-representation of type $n$ isomorphism class. We can choose a local parameter $t\in \O_{C,p}$ as $\O_{C,p}$ is a discrete valuation ring. 
        
        Then $M_n$ induces the homomorphisms over the local ring $\O_{C,p}$ and over the residue field $\kappa(p)$ (i.e. fiber)
      \begin{equation*}
           \rho_n: \m B_0\otimes \O_{C,p} \to \m End (M_n) \otimes \O_{C,p}, \quad \rho_n^0: \m B_0\otimes \kappa(p) = \m B_0|_p \to \m End (M_n) \otimes \kappa(p)= \m End(M_n)|_p.
      \end{equation*}
        Again, we can always choose a basis $\{e_1,e_2,e_3\}$ of $F|_p$ and a trivialization $i: L|_p\cong \C $ so that $q|_p$ is represented by $\textrm{diag}(1,1,0)$. The trivialization $i$ induces an isomorphism of $\C$-algebras $\m B_0|_p\cong \O|_p\oplus \wedge^2 F|_p$  where the latter is generated by $1\in \O|_p$ and $\{x:= ie_1\wedge e_2,y := ie_2\wedge e_3,z:=e_1\wedge e_3\}\subset \wedge^2 F|_p$ with relations 
\begin{equation*}
    x^2 =1, y^2=z^2=0, xy=-z, xz=-y, xy = -yx, xz=-zx,  yz=0.
\end{equation*}
        
        Recall that the algebra $\O|_p\oplus \wedge^2 F|_p$ is isomorphic to $\C Q/J$ generated by $e_+,e_-,\a,\b.$ If we call the isomorphism $j:\C Q/J\xrightarrow{\sim }\O|_p\oplus \wedge^2 F|_p \xrightarrow{\sim} \m B_0|_p $, then clearly $\rho^{0}_n(j(\a)) = \rho^{0}_n(j(\b))= 0$. It follows that $\rho^{0}_n(y) = \rho^{0}_n ( j(\a+\b)) = 0$ and similarly $\rho^{0}_n(z) = 0.$ So that means $\rho_n(y) = tP_y $ and $\rho_n(z) = tP_z$ for some $P_y,P_z\in \m End(M_n)\otimes \O_{C,p}$.

        As $F$ is locally free of rank 3, by Nakayama lemma, we can lift the basis $\{e_1,e_2,e_3\}$ of $F|_p$ to a basis (also denoted as $\{e_1,e_2,e_3\}$ by abuse of notation) of $F\otimes \O_{C,p}$ over $\O_{C,p}$. The quadratic form $q$ is represented by the matrix 
        \begin{equation}
           (f_{ij})=  \begin{pmatrix}
                    f_{11}&f_{12}&f_{13}\\
                    f_{21}&f_{22}&f_{23}\\
                    f_{31}&f_{32}&f_{33}
            \end{pmatrix}
        \end{equation}
        where $f_{ij}$ are elements in $\O_{C,p}$. By the choice of basis $\{e_1,e_2,e_2\}$, we have $f_{11}|_{p} \neq 0$,  $f_{22}|_{p} \neq 0$ and $f_{ij}|_{p} = 0$ for $(i,j)\neq (1,1),(2,2)$. It follows that $f_{ij} = tf_{ij}'$ for $(i,j)\neq (1,1),(2,2)$ and $f_{ij}'\in \O_{C,p}$. Then in $\m B_0\otimes \O_{C, p}$ we have
        \begin{align*}
            yz &= (ie_2e_3)(e_1e_3) \\
            &= if_{31}e_2e_3 - ie_2e_1e_3e_3 \\
            &= if_{31}e_2e_3 -if_{33}e_2e_1\\
            &= f_{31}y  - if_{33}f_{21} + f_{33}x
        \end{align*}
        (here we omit the $"\wedge"$ between the $(e_i)'s$)
        so it follows that 
\begin{equation*}
       \rho_n(yz) = t^2f_{31}'P_y - it^2f_{33}'f_{21}' + tf_{33}'\rho_n(x).
\end{equation*}
        Since
        \begin{equation*}
            \rho_n(yz) = \rho_n(y)\rho_n(z) = t^2P_yP_z
        \end{equation*}
        by equating the two expression, we get
\begin{equation*}
        t^2P_yP_z = t^2f_{31}'P_y - it^2f_{33}'f_{21}' + tf_{33}'\rho_n(x) \implies tP_yP_z = tf_{31}'P_y - itf_{33}'f_{21}' + f_{33}'\rho_n(x).
\end{equation*}
        Note that $f'_{33}$ is invertible in $\O_{C,p}$ because otherwise $\det(f_{ij})$ will have zeros of order 2 with respect to $t$, which is not allowed since we assume that $C$ intersects $\Delta$ transversally. Hence, we can write $\rho_n(x) = tP_x$ for some $P_x\in \m End(M_n)\otimes \O_{C,p}$. In particular, we must have $\rho_n^0(x^2) =0.$

        On the other hand, we have
        \begin{align*}
            x^2 &= (ie_1e_2)(ie_1e_2)\\
            &=-f_{21}e_1e_2 +e_1e_1e_2e_2\\
            &= if_{21}x  + f_{11}f_{22}
        \end{align*}
        (again we omit the $"\wedge"$ between the $(e_i)'s$)
        and so $\rho_n^0(x^2)  =( f_{11}f_{22})|_p\neq 0$ as  $f_{21}|_p=0$, $f_{11}|_p\neq 0$ and $f_{22}|_p\neq 0.$ Hence, a contradiction.  
        
    \end{proof}
    
    Let $U_d\subset |\O_{\P^2}(d)|$ be the subset of smooth curves of degree $d$ which intersect $\Delta$ transversally. For $d<\deg(\Delta)=k$, if we call the line bundle $L_d: =O_{\P^2}(d)|_\Delta $ on $\Delta$, it is easy to see that $|O_{\P^2}(d)|\cong |L_d|$. Hence, we can consider the  variety of divisors $W_d$ lying over $|L_d|$ and its two components as $W^i_d$ for $i=0,1$.

    For each $\m B_0$-module $M\in \f M_{d,e}$ with support on $C\in U_d,$ let $j:C\cap \Delta \hookrightarrow C$ be the inclusion, by Proposition \ref{vector subspaces}, the assignment 
    \begin{equation*}
    M\mapsto \m K:= \ker \left(j^*\m B_0\to  j^*\m End(M)\right) 
    \end{equation*}
     is argued to be contained in $j^*(\wedge^2 F)$ and it defines exactly a point in $W_d.$ 
    
    This construction also works in families. Let $T$ be a scheme and $\m M_T$ be a flat family of $\m B_0$-modules on $\P^2$ with supports on curves of $U_d$ and Chern character $(0,2d,e)$ i.e. $\m M_T$  is a $p^*_1\m B_0$-module on $\P^2\times T$ flat over $T$ with $\textrm{Fit}(\m M_t)\in U_d$ and $\textrm{ch}(\m M_t)=(0,2d,e)$ for all $t:\Spec(\C)\to T$ where $\m M_t= t^*\m M_T$ and $p_1:\P^2\times T\to \P^2$ is the projection. Then we get a map $T\to \f M_{d,e}|_{U_d}\to |U_d|\subset  \Delta^{(dk)}$. We can restrict the family of $\m B_0$-modules to $ \Delta\times T\subset \P^2\times T.$ Consider the universal divisor 
     \begin{equation*}
         \begin{tikzcd}
         &\m D \subset \Delta\times \Delta^{(dk)}\arrow[ld]\arrow[rd]&\\
         \Delta&&\Delta^{(dk)}
         \end{tikzcd}
     \end{equation*}
     By pulling back $\m D $ along the map $\Delta\times T \to \Delta\times \Delta^{(dk)}$, we get another divisor $\m D_T\subset \Delta\times T$ and denote the inclusion by $i_T: \m D_T\hookrightarrow\Delta\times T\hookrightarrow \P^2\times T$. 
     
     We will write $F_T:= i_T^*p_1^*F$ and $L_T:= i_T^*p_1^*L$. The sheaf 
     \begin{equation*}
         \m K_T:= \ker\left( i_T^* p_1^* \m B_0 \to i_T^*\m End (\m M_T)  \right)
     \end{equation*} 
     has constant fiber dimension one and is contained in the rank 3 vector bundle $i_T^*p_1^*\left(\wedge^2F_T\otimes L_T\right)$ on $\m D_T$ by Proposition \ref{vector subspaces}, where $F_T=p_1^*F$ and $L_T=p_1^*L$. Again, since there are the natural isomorphisms $w:\wedge^2 F_T\xrightarrow{\sim} \det(F_T)\otimes F_T^\vee$ and $F_T\xrightarrow{\sim}(F_T^\vee)^\vee$, we can define
     \begin{equation*}
         \m K_T':= \ker( F_T \xrightarrow{\sim} (F_T^\vee)^\vee \xrightarrow{w_T^\vee\otimes \det(F_T)\otimes L_T }  \m K_T^\vee \otimes L_T\otimes \det(F_T))
     \end{equation*}
     where  $w_T:\m K_T \hookrightarrow \wedge^2F_T\otimes L_T \to \det(F_T)\otimes F_T^\vee \otimes L_T$ is the composition. As we checked in Proposition \ref{vector subspaces} that each fiber of the projectivization $\P(\m K'_T)\subset \P(F_T)$ is a component of the fiber of a degenerate conic in the conic bundle $Q\to \P^2$, so we have $\P(\m K'_T)\subset i_T^*p_1^*Q\subset \P(F_T)$. Since $\td{\Delta}\to \Delta$ is the curve parametrizing the irreducible components of $Q|_\Delta \to \Delta$, it follows that $\P(\m K'_T)$ over $\m D_T$ defines a divisor $\td{\m D}_T\subset \td{\Delta}\times T$ that maps to $\m D_T$ via $\td{\Delta}\times T\to \Delta\times T$. The divisor $\td{\m D}_T$ is a $T$-family of degree $dk$ divisors on $\td{\Delta}$, so it defines a map $T\to \td{\Delta}^{(dk)}$ which factors through $W_d|_{U_d}$ since $\m D_T$ is induced from a map $T\to U_d$. It is easy to check that the assignment from $\m M_T$ to $T\to W_d$ is functorial, hence we obtain a morphism over $U_d$: 
    \begin{equation}
    \begin{tikzcd}
    \f M_{d,e}|_{U_d} \arrow[rr,"\Phi"]\arrow[rd]&& W_d|_{U_d}\arrow[ld]\\
    &U_d&
    \end{tikzcd}
    \end{equation}

    \begin{prop} \label{morphism of torsors}
    The morphism $\Phi|_C: \f M_{d,e}|_{C} = \Upsilon^{-1}(C) \to W_d|_{C} $ over $C\in U_d$ is $G|_{C}$-equivariant. 
    \end{prop}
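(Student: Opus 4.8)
The plan is to reduce the equivariance statement first to an equality of functions on closed points, then to a single generating family of elements of $G|_C$, and finally to a purely local computation at one point of $C\cap\Delta$. Since $C\in U_d$ and $d=1,2$, every scheme in sight is finite and reduced over $\C$: by Theorem \ref{fibers as even card subsets} the fiber $\Upsilon^{-1}(C)\cong\bigsqcup_I\Pic^{-|I|/2}C$ is a disjoint union of reduced points because $C\cong\P^1$; $W_d|_C$ is the set of the $2^{dk}$ distinct reduced lifts of the reduced divisor $C\cap\Delta$ along the \'etale cover $\pi$; and $G|_C$ is \'etale over $\Spec\C$. A morphism out of a finite reduced $\C$-scheme is nothing but a function on closed points, so it suffices to verify $\Phi|_C(g\cdot M)=g\cdot\Phi|_C(M)$ for all closed points. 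Writing $C\cap\Delta=\{p_1,\dots,p_{dk}\}$, the group of even-cardinality subsets $G|_C$ is generated by the elements $(1,p_1)+(1,p_i)$, so it is enough to treat $g=(1,p_a)+(1,p_b)$. By Proposition \ref{pseudo torsor}, $g\cdot\Phi|_C(M)$ is obtained from $\Phi|_C(M)$ by replacing its lifts of $p_a$ and $p_b$ with their $\sigma$-images, and by Corollary \ref{fiber is a torsor}, if $M=\psi_*\wh M$ then $g\cdot M=\psi_*\bigl(\wh M\otimes\O(\tfrac{p_a}{2})\otimes\O(\tfrac{p_b}{2})\otimes h_C^{-1}\bigr)$. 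So the whole statement comes down to understanding how the single operation $\wh M\mapsto\wh M\otimes\O(\tfrac{p_a}{2})$ changes the lift of $p_a$ produced by $\Phi$: near $p_a$ only the component $p_a$ of $C\cap\Delta$ is active (Lemma \ref{iterated_root_stacks}), so there the remaining twists $\O(\tfrac{p_b}{2})$ and $h_C^{-1}=\psi^*\O_C(-1)$ are pullbacks from $C$ with trivial $\mu_2$-structure and cannot affect the fiber representation at $p_a$.

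By Proposition \ref{vector subspaces}, the lift of $p_a$ attached to $M$ is read off from the isomorphism type of the fiber representation of $\m B_0|_{p_a}$ on $M|_{p_a}$: type (\ref{type 1 rep}) gives $K=\C\langle y-z\rangle$, hence the isotropic plane $\{a_1-ia_2=0\}$, hence one of the two points of $\pi^{-1}(p_a)$, while type (\ref{type 2 rep}) gives $K=\C\langle y+z\rangle$ and the other point. Thus it is enough to prove the local claim: if $M=\psi_*(E_0\otimes\wh L)$ has fiber representation of type (\ref{type 1 rep}) at $p_a$, then $\psi_*\bigl((E_0\otimes\wh L)\otimes\O(\tfrac{p_a}{2})\bigr)$ has fiber representation of type (\ref{type 2 rep}) at $p_a$, and conversely.

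To prove the local claim I would pass to the $\mu_2$-quotient model of Examples \ref{root stack over affine scheme} and \ref{root stack = quotient stack}: localize $C$ at $p_a$, put $R=\O_{C,p_a}$, pick a uniformizer $s$ cutting out $C\cap\Delta$ (possible as the intersection is transverse), and set $R'=R[t]/(t^2-s)$, so that $\wh C$ is locally $[\Spec R'/\mu_2]$ with $\mu_2$ acting by $t\mapsto-t$. Then $E_0\otimes\wh L$ corresponds to a free $\Z/2$-graded $R'$-module $N$ of rank $2$ whose two graded pieces have opposite parity at $w_a$ (Proposition \ref{characters of E0}), so we may pick a homogeneous basis $\{e_1,e_2\}$ with $e_1$ of degree $0$ and $e_2$ of degree $1$; then $\psi_*(E_0\otimes\wh L)$ has $R$-basis $\{e_1,te_2\}$. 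Using that $\m B_0|_{p_a}=\psi_*\m End(E_0)|_{p_a}$ is the degree-$0$ part of $\m End_{R'}(N)$ reduced modulo $s=t^2$, a direct matrix computation identifies the resulting fiber representation with one of the two types under the quiver presentation of Proposition \ref{degenerate even Clifford = path algebra}. Since $\O(\tfrac{p_a}{2})$ corresponds to $L(w_a)$, i.e. to the rank-$1$ module $R'$ generated in degree $1$ (the $\mu_2$-structure on $\O(w_a)$ fixing the section vanishing at $w_a$, cf. Example \ref{root stack = quotient stack}), tensoring by it shifts the $\Z/2$-grading of $N$; the new module then has its degree-$0$ and degree-$1$ generators with their roles interchanged, and rerunning the same computation yields the opposite type.

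The main obstacle is this last step, and precisely the bookkeeping: one must fix the identification $\C Q/J\cong\psi_*\m End(E_0)|_{p_a}$ so that it is compatible with the one used in Proposition \ref{vector subspaces} — so that ``type (\ref{type 1 rep})'' and ``type (\ref{type 2 rep})'' refer to the same representations on both sides — and then check that the grading shift genuinely exchanges the vanishing of $\alpha$ with that of $\beta$ rather than preserving it. Once the local claim is in hand, applying it at $p_a$ and at $p_b$ and comparing with the closed-point formulas of Corollary \ref{fiber is a torsor} (for the $G|_C$-action on $\Upsilon^{-1}(C)$) and Proposition \ref{pseudo torsor} (for the action on $W_d|_C$) gives $\Phi|_C(g\cdot M)=g\cdot\Phi|_C(M)$ for every generator $g$, hence for all of $G|_C$; everything else is formal.
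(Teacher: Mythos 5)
Your proposal is correct and takes essentially the same route as the paper: reduce the equivariance to a pointwise check at each $p\in C\cap\Delta$, pass to the local $\mu_2$-model $R'=R[t]/(t^2-s)$ where the two graded pieces of the rank-2 module have opposite parity (Proposition \ref{characters of E0}), and show that twisting by $\O(\tfrac{p}{2})$ flips the kernel of $\m B_0|_p\to \m End(\cdot)|_p$ and hence the chosen lift, your extra reduction to the pair generators of $G|_C$ being a harmless repackaging. The step you defer as ``bookkeeping'' is precisely the matrix computation the paper carries out (comparing the two pushforward module structures $\a^0$ and $\a^1$ on the generators $I,a,b,c$, whose kernels at $p$ are spanned by $(b-c)|_p$ and $(b+c)|_p$ respectively), and it comes out as you claim; moreover no delicate compatibility with the identification in Proposition \ref{vector subspaces} is required, since there are only two possible kernels and two lifts, so showing the kernel changes already forces the type to flip.
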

    \begin{proof}
     Let $\sum (\lambda_i,p_i)\in G|_C$. Recall the notations from Theorem \ref{fibers as even card subsets} that $\wh{C}= C_{C\cap \Delta, 2}$ is the 2nd-root stack and $\psi:\wh{C}\to C$ is the projection morphism. By Theorem \ref{fibers as even card subsets},  we can write $M\in \f M_{d,e}|_{C}$ as $M= \psi_*(E_0\otimes \wh{L})$ for a $\wh{L}$ line bundle on $\wh{C}$ by choosing a rank 2 bundle $E_0$ (recall that $E_0$ is determined up to a line bundle) on $\wh{C}$. We need to show that 
    \begin{equation}
       \Phi\left( \sum\left(\lambda_i,p_i\right) \cdot M \right) =\Phi\left(\psi_*\left( E_0\otimes\wh{ L}\otimes\O\left(\sum_i \frac{\lambda_i}{2}p_i\right)\otimes h_C^{-\frac{1}{2}\sum \lambda_i} \right)\right)  = \left(\sum \lambda_ip_i\right)\cdot \Phi (M)
    \end{equation}
     Since $\Phi(M)$ is determined at each point in $C\cap \Delta$, it suffices to check the equivariance property over a point $p\in C\cap \Delta$. As we checked that $\ker\left(\m B_0|_p\to \m End\left.\left(\psi_*\left(E_0\otimes \wh{L}\right)\right)\right|_p\right)$ always determines one of the two preimages of $p\in \Delta$ in the double cover $\td{\Delta},$ to prove the proposition it suffices to show that 
    \begin{equation*}
        \ker\left(\m B_0|_p\to \m End\left.\left(\psi_*\left(E_0\otimes \wh{L}\right)\right)\right|_p\right)\neq \ker \left(\m B_0|_p\to \m End\left.\left(\psi_*\left(E_0\otimes \wh{L}\otimes \O\left(\frac{p}{2}\right)\right)\right)\right|_p\right)
    \end{equation*}
    or equivalently, 
    \begin{equation}\label{equivariance property}
        \ker\left(\m B_0|_p\to \m End\left.\left(\psi_*\left(E_0\otimes \wh{L}\right)\right)\right|_p\right)\neq \ker \left(\m B_0|_p\to \m End\left.\left(\psi_*\left(E_0\otimes \wh{L}\otimes \O\left(-\frac{p}{2}\right)\right)\right)\right|_p\right).
    \end{equation}
    In fact, we can simplify further by assuming $\wh{L}= \O_{\wh{C}}$.
    
    The $\m B_0$-module structure on $\psi_*( E_0\otimes \wh{L})$ can be described concretely by the composition of the isomorphism $\m B_0\cong \psi_*\m End(E_0)\cong \psi_*\m End(E_0\otimes \wh{L})$ and the natural morphism
    \begin{equation*}
           \a: \psi_*\m End(E_0\otimes \wh{L})\to \m End(\psi_*(E_0\otimes \wh{L})).
    \end{equation*}
    In particular, we can define 
    \begin{align*}
        \a^0&:\psi_*\m End(E_0)\xrightarrow{\sim} \psi_*\m End\left(E_0\otimes \O\left(-\frac{p}{2}\right)\right)\to \m End\left(\psi_*\left(E_0\otimes \O\left(-\frac{p}{2}\right)\right)\right)\\
        \a^1&: \psi_*\m End(E_0)\to \m End\left(\psi_*(E_0)\right)
    \end{align*}
    Hence, to check that (\ref{equivariance property}) holds, it is equivalent to show that $\ker(\a^0|_p)\neq \ker(\a^1|_p)$. 
    
    To check this, we proceed as in Example \ref{root stack over affine scheme} and Proposition \ref{characters of E0} and work in an affine neighborhood $Z= \Spec(R)$ of $p$ and the double cover $\td{Z}= \Spec(R')$ where $R':=R[t]/(t^2-s))$ and $div(s)=p$. So that the root stack restricted over $Z$ is simply $\wh{Z}= [ \Spec(R[t]/(t^2-s))/\mu_2]$. Recall the notations that a $\Z/2\Z$-graded $R'$-module is written as $A=A_0\oplus A_1$ with $A_i$ being the graded pieces. We can further reduce to the localization of $R$ at $p$, we will again write the local ring as $R$ and its unique maximal ideal $\f m$ which contains $s.$

    As argued in Proposition \ref{characters of E0}, $E_0$ is a $\Z/2\Z$-graded $R'$-module $N= N_0\oplus N_1$ and  we can choose $e_1\in N_0 $ and $e_2\in N_1$ such that $N\cong R'e_1\oplus R'e_2$. In terms of the $R'$-basis $\{e_1,e_2\}$, $\psi_*(\m End(E_0))\cong \left(\m End(E_0)\right)_0$ consists of homogeneous $R$-module homomorphisms $\delta$ of degree 0:
     \begin{equation}\label{endomorphism 1}
         \begin{aligned}
              e_1\mapsto u_0e_1 + u_1e_2\\
         e_2\mapsto v_1e_1+ v_0e_2
         \end{aligned}
     \end{equation}
     where $u_i,v_i\in (R')_i. $ Then we can write $u_1 = t\td{u}_1$ and $v_1 = t\td{v}_1$ where $\td{u}_1,\td{v}_1\in (R')_0=R.$      
        
    As before, the module $\psi_*E_0$ is freely generated by $\{f_1=e_1,f_2= te_2\}$ as $R$-module. Suppose that $\delta\in \psi_*End(E_0)$ is of the form (\ref{endomorphism 1}), then its image in $\m End(\psi_*E_0)$ under $\a$ will be a map of the form
    \begin{align*}
        f_1 = e_1&\mapsto u_0e_1 + \td{u}_1(te_2)  = u_0f_1+ \td{u}_1f_2\\
        f_2 = te_2&\mapsto \td{v}_1t(te_1) + v_0(te_2)  = s\td{v}_1f_1 + v_0f_2
    \end{align*}
    If we choose the generators of $\psi_*\m End(E_0)$ to be the following $R$-valued matrices (with respect to the basis $\{e_1,e_2\})$
    \begin{equation}
        I= \begin{pmatrix}
            1&0\\0&1
        \end{pmatrix}, \quad
        a:= \begin{pmatrix}
            -1&0\\0&1
        \end{pmatrix}, \quad 
        b:= \begin{pmatrix}
            0&t\\t&0
        \end{pmatrix}, \quad 
        c:=\begin{pmatrix}
            0&t\\-t&0
        \end{pmatrix}
    \end{equation}
    their images in $\m End(\psi_*E_0)$ are the corresponding $R$-matrices (with respect to the basis $\{f_1,f_2\}$):
    \begin{equation}
        \begin{pmatrix}
            1&0\\0&1
        \end{pmatrix}, \quad
        \begin{pmatrix}
            -1&0\\0&1
        \end{pmatrix}, \quad 
        \begin{pmatrix}
            0&s\\1&0
        \end{pmatrix}, \quad 
        \begin{pmatrix}
            0&s\\-1&0
        \end{pmatrix}
    \end{equation}
     
    As the homomorphism $\O\left(-\frac{p}{2}\right)\to \O_{\wh{C}}$ is represented as the inclusion $R't\hookrightarrow R'$ of $R'$-modules, the homomorphism $E_0\otimes \O\left(-\frac{p}{2}\right)\to E_0$ corresponds to taking the $R'$-module homomorphism
     \begin{equation}\label{injection}
         R'(te_1)\oplus R'(te_2)\to R'(e_1)\oplus R'(e_2).
     \end{equation}
     Note that $\psi_*\left(E_0\otimes \O\left(-\frac{p}{2}\right)\right)$ corresponds to the $R$-module $R(sf_1)\oplus Rf_2$ as the $\mu_2$-invariant submodule of $R'(te_1)\oplus R' (te_2)$.
     Pushing the homomorphism (\ref{injection}) forward $\psi_*\left(E_0\otimes \O\left(-\frac{p}{2}\right)\right)\to \psi_*E_0$ corresponds to taking the $\mu_2$-invariant part $R(sf_1)\oplus Rf_2 \to Rf_1\oplus Rf_2$, which is represented by the $R$-valued matrix
     \begin{equation*}
         \begin{pmatrix}
                s&0\\0&1
         \end{pmatrix}
     \end{equation*}
     with respect to the bases $\{sf_1,f_2\}$ of $\psi_*E_0\otimes \O(-\frac{p}{2})$ and $\{f_1,f_2\}$ of $\psi_*E_0$.

     For any $\delta \in \psi_*(\m End(E_0))$, there are $\O_Z$-module homomorphisms $\a^0(\delta) :\psi_*(E_0\otimes \O(-\frac{p}{2}))\to \psi_*(E_0\otimes \O(-\frac{p}{2}))$ and $\a^1(\delta):\psi_*E_0\to \psi_*E_0$. They form a commutative diagram by the definition of a $\psi_*\m End(E_0)$-module homomorphism
     \begin{equation*}
         \begin{tikzcd}
         \psi_*(E_0\otimes \O(-\frac{p}{2}))\arrow[r,"\a^0(\delta)"]\arrow[d]& \psi_*(E_0\otimes \O(-\frac{p}{2}))\arrow[d]\\
         \psi_*E_0\arrow[r,"\a^1(\delta)"]&\psi_*E_0
         \end{tikzcd}
     \end{equation*}
     
     In terms of the bases $\{f_1,f_2\},\{sf_1,f_2\}$ of $\psi_*E_0$ and $\psi_*(E_0\otimes \O(-\frac{p}{2}))$, the morphisms above can be written as 
        \begin{equation*}
         \begin{tikzcd}[ampersand replacement=\&]
         R\oplus R\arrow{rr}{\a^0(\delta)}\arrow{d}{\begin{pmatrix}
             s&0\\0&1
         \end{pmatrix}}\&\& R\oplus R\arrow{d}{\begin{pmatrix}
             s&0\\0&1
         \end{pmatrix}}\\
         R\oplus R\arrow[rr,"\a^1(\delta)"]\&\&R\oplus R
         \end{tikzcd}
     \end{equation*}
     Since $\a^1(b)= \begin{pmatrix}
         0&s\\1&0
     \end{pmatrix}$, it is easy to check that $\a^0(b)$ must be $\begin{pmatrix}
         0&1\\s&0
     \end{pmatrix}$. Similarly, we have the following $\a^0(\delta)$ when $\a^1(\delta)$ is the other generator:
     \begin{align*}
         &\a^1(I) = \begin{pmatrix}
             1&0\\0&1
         \end{pmatrix}\implies \a^0(I)= \begin{pmatrix}
             1&0\\0&1
         \end{pmatrix} \\
         &\a^1(a) = \begin{pmatrix}
             -1&0\\0&1
         \end{pmatrix}\implies \a^0(a)= \begin{pmatrix}
             -1&0\\0&1
         \end{pmatrix}  \\
         &\a^1(c) = \begin{pmatrix}
             0&s\\-1&0
         \end{pmatrix}\implies \a^0(c)= \begin{pmatrix}
             0&1\\-s&0
         \end{pmatrix} 
     \end{align*}
     Finally, when $s=0$ i.e. over $p$, we have 
     \begin{enumerate}
         \item the kernel of $\a^1|_p$ is spanned by $(b+c)|_p$,
         \item the kernel of $\a^0|_p$ is spanned by $(b-c)|_p $.
     \end{enumerate}
     Hence, $\ker(\a^0|_p)\neq \ker(\a^1|_p)$ and we are done.

    \end{proof}
    
    \begin{thm} \label{parity and birational type }
    Let $d=1,2$ and $e\in \Z$. 
    \begin{enumerate}
        \item The moduli space $\f M_{d,e}$ is birational to one of the two connected components $W_d^i$ of $W_d.$
        \item If $\f M_{d,e}$ is birational to $W^i_d$, then $\f M_{d,e+1}$ is birational to $W_d^{1-i}.$ In particular, the birational type of $\f M_{d,e}$  only depends on $d$ and $(e$ mod $2)$. 
    \end{enumerate}
    \end{thm}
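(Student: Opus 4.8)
The plan is to use the torsor structures established in Section~3 to show that $\Phi$ restricts over $U_d$ to an isomorphism onto a single component of $W_d$, and then to track how that component moves under $e\mapsto e+1$ by tensoring with the tautological line bundle $\O(\tfrac{p}{2})$ at one branch point $p\in C\cap\Delta$.

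\textbf{Part (1).} Over $U_d$ there are three structures in play. First, $\f M_{d,e}|_{U_d}$ is a $G|_{U_d}$-torsor over $U_d$ (Corollary~\ref{fiber is a torsor}, in its evident family version). Second, applying Proposition~\ref{pseudo torsor} to the linear system $|L_d|$ and restricting over $U_d$, each component $W_d^{\epsilon}|_{U_d}$, $\epsilon=0,1$, is a pseudo $G|_{U_d}$-torsor; since over each $C\in U_d$ it has $2^{dk-1}$ points — exactly half of the $2^{dk}$ lifts of the reduced divisor $C\cap\Delta$, by the parity rule of Remark~\ref{involution by even card subsets}, which is the order of $G|_C$ — it is an honest $G|_{U_d}$-torsor. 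Third, $\Phi$ is $G|_{U_d}$-equivariant (Proposition~\ref{morphism of torsors}). Now a $G|_{U_d}$-equivariant morphism from a $G|_{U_d}$-torsor to $W_d^0|_{U_d}\sqcup W_d^1|_{U_d}$ carries each fibre, a single $G|_C$-orbit, into exactly one of the two components, so $\Phi^{-1}(W_d^{\epsilon}|_{U_d})$ is a union of fibres, i.e. the preimage of an open and closed subset of $U_d$. Since $U_d$ is connected — it is the complement of a proper closed subset of $|\O_{\P^2}(d)|\cong\P^N$ — the morphism $\Phi$ maps $\f M_{d,e}|_{U_d}$ into a single $W_d^{i}|_{U_d}$, and a $G|_{U_d}$-equivariant morphism of $G|_{U_d}$-torsors over $U_d$ is an isomorphism, so $\Phi\colon\f M_{d,e}|_{U_d}\xrightarrow{\sim}W_d^{i}|_{U_d}$. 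Finally $\f M_{d,e}|_{U_d}=\Upsilon^{-1}(U_d)$ is a dense open of $\f M_{d,e}$ and $W_d^{i}|_{U_d}$ is a dense open of $W_d^{i}$ — here one uses that $W_d^{i}$ is irreducible, which holds for linear systems by the theory of special subvarieties (\cite{Welters81AJ_Isogenies},\cite{beauville1982sous}), and that $\f M_{d,e}$ is irreducible — whence $\f M_{d,e}$ is birational to $W_d^{i}$.

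\textbf{Part (2).} It suffices to show: if $\Phi$ maps $\f M_{d,e}|_{U_d}$ into $W_d^{i}$, then it maps $\f M_{d,e+1}|_{U_d}$ into $W_d^{1-i}$; the last assertion then follows by Part~(1) and induction on $e$. Fix $C\in U_d$, write $C\cap\Delta=p_1+\cdots+p_{dk}$, and recall from the proof of Theorem~\ref{fibers as even card subsets} that every $M\in\f M_{d,e}|_C$ has the form $\psi_*(E_0\otimes\wh L)$ for a line bundle $\wh L$ on $\wh C$. Fix $j$, put $p=p_j$, and set $T_pM:=\psi_*\bigl(E_0\otimes\wh L\otimes\O(\tfrac{p}{2})\bigr)$. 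Repeating the Chern-character computation of Theorem~\ref{fibers as even card subsets} shows that tensoring by the tautological bundle $\O(\tfrac{p}{2})$ raises $\deg\psi_*(-)$, hence $\ch_2$, by exactly $1$ — whether or not $p$ already occurs in the support data of $\wh L$ — so $T_p$ is a bijection $\f M_{d,e}|_C\xrightarrow{\sim}\f M_{d,e+1}|_C$. Since $\O(\tfrac{p}{2})$ is equivariantly trivial away from $\psi^{-1}(p)$, we have $T_pM\cong M$ as $\m B_0$-modules near every $p_l$ with $l\neq j$, so $\Phi(T_pM)$ and $\Phi(M)$ carry the same lift at each such $p_l$. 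At $p$ itself, the local computation in the proof of Proposition~\ref{morphism of torsors} — with $\O(-\tfrac{p}{2})$ replaced by $\O(\tfrac{p}{2})$ — gives $\ker\bigl(\m B_0|_p\to\m End(T_pM)|_p\bigr)\neq\ker\bigl(\m B_0|_p\to\m End(M)|_p\bigr)$, i.e. the lift at $p$ is swapped. Thus $\Phi(T_pM)$ is obtained from $\Phi(M)\in\td{\Delta}^{(dk)}$ by applying the involution $\sigma$ of $\td{\Delta}$ at exactly one of the $dk$ points, and by Remark~\ref{involution by even card subsets} an odd number of such swaps interchanges $W_d^{i}$ and $W_d^{1-i}$. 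Hence $\Phi(M)\in W_d^{i}|_C$ forces $\Phi(T_pM)\in W_d^{1-i}|_C$, as claimed.

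\textbf{Expected main obstacle.} Part~(1) is essentially bookkeeping with the torsor structures already established. The real content is Part~(2): one must align, along the single twist $\O(\tfrac{p}{2})$, three separate facts — the jump $e\mapsto e+1$ in the Chern character, the swap of the lift at $p$ (reusing the local computation behind Proposition~\ref{morphism of torsors}), and the parity rule of Remark~\ref{involution by even card subsets} that an odd number of local swaps changes the $W_d$-component. A secondary point to pin down is the density/irreducibility input in Part~(1) needed to upgrade the isomorphism over $U_d$ to a genuine birational equivalence.
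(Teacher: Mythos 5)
Your argument is correct and follows essentially the same route as the paper: $G$-equivariance of $\Phi$ together with the torsor structures and connectedness of the base forces the image into a single component $W_d^i$, and the single twist by $\O(\tfrac{p}{2})$ at one branch point produces exactly the parity swap used for (2). The only difference is cosmetic, at the end of (1): rather than invoking a family version of the torsor structure on $\f M_{d,e}|_{U_d}$ and the fact that an equivariant morphism of torsors is an isomorphism, the paper restricts to a dense open $V\subset U_d$ where $\Upsilon$ is finite, observes that $\Phi$ is bijective on closed points there, and uses normality of $W_d^i|_V$ to conclude it is an isomorphism.
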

    \begin{proof}
        In this proof, we will write $\Phi$ as $\Phi_{d,e}: \f M_{d,e}|_{U_d}\to W_d|_{U_d} $ to indicate $d$ and $e$ explicitly. 

        Since the morphism $\Upsilon|_{U_d}: \f M_{d,e}|_{U_d}\to U_d$ is quasi-finite (Theorem \ref{fibers as even card subsets}), there exists an open dense subset $V\subset U_d$ such that the restriction $\Upsilon|_{V}: \f M_{d,e}|_V\to V$ is finite.  Note that Corollary \ref{morphism of torsors} implies that each fiber $\Upsilon^{-1}(C)$ for $C\in V$ must be contained in a connected component of $W_d|_V$. It follows that $\Phi_{d,e}(\f M_{d,e} |_V)$ is contained in one of the connected components $W^i_d|_V$ of $W_d|_V$. Indeed, if this is not the case, the finite morphism $\Upsilon|_V$ would send the disjoint nonempty closed subsets $\Phi_{d,e}^{-1}(W^1_d|_V)$ and $\Phi_{d,e}^{-1}(W^2_d|_V)$ to disjoint nonempty closed subsets in $V$, contradicting the irreducibility of $V$. Then, the combination of Proposition \ref{pseudo torsor}, Corollary \ref{fiber is a torsor}, Proposition \ref{morphism of torsors} shows that the morphism $\Phi_{d,e}|_{\Upsilon^{-1}(V)}$ is a bijection on closed points from $\f M_{d,e}|_{V}$ to $W^i_d|_V$. As $W_d^i|_{V}$ is smooth and hence normal, the morphism $\Phi_{d,e}|_{\Upsilon^{-1}(V)}$ is an isomorphism and this proves part (1).

        Now, for any $e\in \Z$, suppose $M = \psi_*\wh{M}\in \f M_{d,e}$ and $\Phi_{d,e}(M)= x_1+...+x_{dk} \in W^i_d$ where $x_j\in \td{\Delta}$ and $k=\deg(\Delta)$. The computation in Theorem \ref{fibers as even card subsets} shows that $\ch\left(\psi_*\left(\wh{M}\otimes \O\left(\frac{p_j}{2}\right)\right)\right) = (0,2d,e+1)$. By the proof of Proposition \ref{morphism of torsors}, we see that $\Phi_{d,e+1}\left(\psi_*\left(\wh{M}\otimes \O\left(\frac{p_j}{2}\right)\right)\right) = x_1 + ...+\sigma(x_j)+ ... +x_{dk}\in W^{1-i}_d$. Hence, it follows that $\f M_{d,e+1}$ is birational to $W^{1-i}_d$ by part (1).

    \end{proof}

    \begin{rmk}
        The need for the assumption $d=1,2$ in Theorem \ref{parity and birational type } can already be seen by comparing the fiber dimension of $\Upsilon|_{U_d}: \f{M}_{d,e}|_{U_d}\to  U_d$ and $W_d|_{U_d}\to U_d$: the dimension of the fibers of the latter is 0, while the dimension of the fibers of the former is $\dim(\Pic(C)) = g(C)$ (by Theorem \ref{fibers as even card subsets}) for $C\in U_d$, which is positive for $d\geq 3.$
    \end{rmk}
\section{Cubic threefolds}

We will apply the construction of the rational map $\Phi: \f M_{d,e} \dashrightarrow W_d$ for the conic bundles obtained by blowing up smooth cubic threefolds along a line. As a consequence, this yields an explicit correspondence between instanton bundles on cubic threefolds and twisted Higgs bundles on the discriminant curve.

Let $Y\subset \P^4$ be a cubic threefold and $l_0\subset Y$ a general line. The blow-up $\sigma: \td{Y}:= Bl_{l_0}Y \to Y$ of $Y$ along $l_0$ is known to be a conic bundle $\pi: \td{Y}\to \P^2$. In this case, the rank 3 vector bundle is $F = \O_{\P^2}^{\oplus 2} \oplus \O_{\P^2}(-1)$ and the line bundle is $L = \O_{\P^2}(-1)$. The discriminant curve $\Delta$ of the conic bundle $\pi:\td{Y}\to \P^2$ is a degree 5 curve and its \'{e}tale double cover is denoted by $\td{\Delta}\to \Delta.$ Then we can consider the variety of divisors $W_2\subset \td{\Delta}^{(10)} $ lying over the linear system $|\O_{\P^2}(2)|_\Delta|$, its two components $W_2= W_2^0\cup W_2^1$ and the associated sheaf of even Clifford algebras $\m B_0$, and the moduli space $\f M_{d,e}$ as considered in previous sections.  Note that in the case of $Y$, $\O_{\P^2}(2)|_\Delta\cong K_\Delta$, so we can apply Example \ref{linear system of canonical divisors}. Recall that in Example \ref{linear system of canonical divisors} the Abel-Jacobi map $\td{\a}:\td{\Delta}^{(10)}\to J^{10}\td{\Delta}$ induces the morphism $\td{\a}|_{W_2^1}:W^1_2\to \Pr^1 $ that maps birationally to the abelian variety $\Pr^1$ and the morphism $\td{\a}|_{W_2^0}: W^0_2\to \Pr^0$ which is a generically $\P^1$-bundle over the theta divisor.

\begin{prop} \label{B_0 modules moduli = Prym}
Let $e\in \Z$ be even. The image of $\Phi: \f M_{2,e}\dashrightarrow W_2$ is contained in the component $W_2^1$. In particular, $\f M_{2,e}$ is birational to the Prym variety $\Prym(\td{\Delta},\Delta)$.
\end{prop}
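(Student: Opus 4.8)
The plan is to pin down which of the two halves $W_2^0,W_2^1$ of $W_2$ receives the image of $\Phi$; the statement then follows formally from what is already in place. Recall that, because $\O_{\P^2}(2)|_\Delta\cong K_\Delta$ by adjunction for the plane quintic $\Delta$, Example~\ref{linear system of canonical divisors} applies: $W_2$ is the variety of divisors lying over the canonical linear system $|K_\Delta|$, so $\td{\a}|_{W_2^1}\colon W_2^1\to\Pr^1$ is birational, while $\td{\a}|_{W_2^0}\colon W_2^0\to\Pr^0$ is generically a $\P^1$-bundle over the theta divisor $\Theta\subset\Pr^0$. By Theorem~\ref{parity and birational type } the space $\f M_{2,e}$ is birational, via $\Phi$, to exactly one of $W_2^0$, $W_2^1$, and which one depends only on $e$ modulo $2$; since every even integer is $\equiv-4\pmod 2$, it suffices to settle the case $e=-4$.

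For $e=-4$ I would argue by a uniruledness obstruction. By \cite{Lahoz15ACM} (see also \cite{kuznetsov2012instanton}) the functor $\Xi_3$ induces a birational map between the moduli space $\f M_Y$ of instanton bundles of minimal charge on $Y$ and $\f M_{2,-4}$, and by \cite{markushevich1998}\cite{Iliev2000}\cite{Druel}\cite{beauville02cubic} the space $\f M_Y$ is birational to the intermediate Jacobian $J(Y)$. Hence $\f M_{2,-4}$ is birational to an abelian variety, so it is not uniruled. On the other hand $W_2^0$ is uniruled, being generically a $\P^1$-bundle over $\Theta$. Since uniruledness is a birational invariant, $\f M_{2,-4}$ is not birational to $W_2^0$; therefore the image of $\Phi$ lies in $W_2^1$, and by Theorem~\ref{parity and birational type }(1) the map $\Phi$ restricts to an isomorphism from a dense open of $\f M_{2,-4}$ onto a dense open of $W_2^1$. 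By Theorem~\ref{parity and birational type }(2) the same conclusion holds for every even $e$, which is the first assertion. For the \emph{in particular}, compose the birational map $\f M_{2,e}\dashrightarrow W_2^1$ with the birational morphism $\td{\a}|_{W_2^1}\colon W_2^1\to\Pr^1$ and recall that $\Pr^1$ is a translate of $\Prym(\td{\Delta},\Delta)$, hence isomorphic to it; thus $\f M_{2,e}$ is birational to $\Prym(\td{\Delta},\Delta)$.

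I expect the parity determination — that it is $W_2^1$ and not $W_2^0$ that is hit — to be the only real obstacle; the argument above resolves it by importing the classical birationality of the minimal-instanton moduli space with $J(Y)$. A more self-contained route would be to evaluate Mumford's parity invariant $h^0\big(\O_{\td{\Delta}}(\Phi(M))\big)\bmod 2$, which is constant on each half of $W_2$ and is exactly the invariant distinguishing $\Pr^1$ from $\Pr^0$, at one convenient point — for instance at a reducible conic $C=\ell_1\cup\ell_2\in|K_\Delta|$ (so that $C\cap\Delta$ splits into two line-sections) together with an explicitly chosen module $M$, using the description of $\Phi$ from Proposition~\ref{vector subspaces}. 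Carrying that out amounts to controlling $h^0$ of the induced theta characteristic $\pi^*\O_\Delta(1)$ on $\td{\Delta}$, equivalently of $h^0(\Delta,\O_\Delta(1)\otimes\eta)$ for the two-torsion line bundle $\eta$ defining $\pi\colon\td{\Delta}\to\Delta$, and that is where the genuine work would lie.
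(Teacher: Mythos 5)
Your proposal is correct and follows essentially the same route as the paper: reduce to $e=-4$ via Theorem \ref{parity and birational type }, use \cite{Lahoz15ACM} together with the classical results to identify $\f M_{2,-4}$ birationally with the intermediate Jacobian $J(Y)$, and rule out $W_2^0$ because it is generically a $\P^1$-bundle over the theta divisor (the paper states this as "cannot happen for a variety birational to an abelian variety," which is exactly your uniruledness obstruction made explicit). The alternative parity-invariant computation you sketch at the end is not needed and is not the paper's argument.
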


\begin{proof}
Recall that Theorem \ref{parity and birational type } says that $\Phi$ maps $\f M_{2,-4}$ birationally to one of the connected components $W^i_2$ of $W_2$, it suffices to show that $\f M_{2,-4}$ cannot be birational to $W^0_2$. 
 By the work of \cite{Lahoz15ACM} (see Theorem \ref{instanton moduli = intermediate  J} and Theorem \ref{instanton moduli = B_0 modules moduli} in the next subsection), it is known that $\f M_{2,-4}$ is birational to another abelian variety, namely the intermediate Jacobian of the cubic threefold $Y$. In particular, the component of $W_2$ that is birational to $\f M_{2,-4}$ is birational to an abelian variety.

But recall from Example \ref{linear system of canonical divisors} (2) that $W^0_2$ is generically a $\P^1$-bundle, which cannot happen for a variety birational to an abelian variety. Hence, the image of $\Phi$ must be contained in $W^1_2.$ It follows immediately that the composition $\f M_{2,-4} \dashrightarrow W^1_2 \xrightarrow{\td{\a}|_{W^1_2}} \Pr^1$ is a birational map. By Theorem \ref{parity and birational type }, the same holds for $\f M_{2,e}$ when $e$ is even. 

\end{proof}

\begin{prop}
Let $e\in \Z$ be even. The image of $\Phi: \f M_{2,e+1}\dashrightarrow W_2$ is contained in the component $W^0_2$ and its image in $\Pr^0\cong Prym(\td{\Delta},\Delta)$ is an open subset of the theta divisor of the Prym variety. 
\end{prop}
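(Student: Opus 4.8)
The plan is to obtain the statement as a bookkeeping consequence of Theorem \ref{parity and birational type}, Proposition \ref{B_0 modules moduli = Prym}, and Example \ref{linear system of canonical divisors}, exploiting that here the relevant linear system is canonical. First I would record the numerical set-up: $\Delta\subset\P^2$ is a smooth plane quintic, so $g(\Delta)=6$, and adjunction gives $\O_{\P^2}(2)|_\Delta\cong K_\Delta$. Hence $|L_2|$ is the complete canonical system $|K_\Delta|$, the variety of divisors $W_2\subset\td\Delta^{(10)}$ over $|L_2|$ is exactly the one treated in Example \ref{linear system of canonical divisors}, and from that example $\td\a|_{W_2^0}\colon W_2^0\to\Pr^0$ is dominant onto a divisor $\Theta\subset\Pr^0$ and is generically a $\P^1$-bundle; moreover, by the classical Prym theory for the canonical system (Mumford \cite{mumford71theta}, see also \cite{mumford74prym}), the divisor $\Theta$ is a translate of the theta divisor of $\Prym(\td\Delta,\Delta)$.

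Second, I would pin down which component of $W_2$ receives $\f M_{2,e+1}$. By Proposition \ref{B_0 modules moduli = Prym} applied to the even integer $e$, the image of $\Phi\colon\f M_{2,e}|_{U_2}\to W_2|_{U_2}$ lies in $W_2^1$; in the notation of the proof of Theorem \ref{parity and birational type}, the (locally constant) component index of $\f M_{2,e}$ equals $1$. The parity computation in the proof of Theorem \ref{parity and birational type}(2) --- tensoring the underlying line bundle $\wh M$ on the root stack by $\O(p_j/2)$ raises $\ch_2$ by one while replacing the contribution $x_j$ of $\Phi(M)$ by $\sigma(x_j)$, which by Remark \ref{involution by even card subsets} switches the connected component of $W_2$ --- then forces the component index of $\f M_{2,e+1}$ to be $1-1=0$. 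Hence the image of $\Phi\colon\f M_{2,e+1}|_{U_2}\to W_2|_{U_2}$ is contained in $W_2^0$, and by Theorem \ref{parity and birational type}(1) the induced rational map $\f M_{2,e+1}\dashrightarrow W_2^0$ is birational.

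Finally, I would push forward along the Abel--Jacobi map. Composing the birational map $\f M_{2,e+1}\dashrightarrow W_2^0$ with $\td\a|_{W_2^0}\colon W_2^0\to\Pr^0$ gives a dominant rational map $\td\a\circ\Phi\colon\f M_{2,e+1}\dashrightarrow\Theta$: over a dense open $V\subset U_2$ on which $\Phi$ restricts to an isomorphism onto $W_2^0|_V$, its image is the dense constructible subset $\td\a(W_2^0|_V)\subset\Theta$, which contains a dense open subset of $\Theta$. Identifying $\Pr^0$ with $\Prym(\td\Delta,\Delta)$ by a translation and $\Theta$ with the Prym theta divisor then yields the assertion.

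I do not expect a genuine obstacle: everything needed is already in place. The two points that deserve care are (i) the parity count --- checking that it is precisely the odd values of $\ch_2$ (here $e+1$) that are carried to the component $W_2^0$, so that the conclusion attaches to $\f M_{2,e+1}$ and not to $\f M_{2,e}$; and (ii) invoking the sharp classical statement that the special subvariety $\td\a(W^0)$ of the canonical system is exactly the theta divisor of the Prym variety --- which uses that $\Delta$ is non-hyperelliptic --- rather than merely some effective divisor in $\Pr^0$.
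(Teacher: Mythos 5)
Your proposal is correct and follows essentially the same route as the paper, which deduces the statement directly from Theorem \ref{parity and birational type } (the parity switch of components), Proposition \ref{B_0 modules moduli = Prym} (even $e$ lands in $W_2^1$), and Example \ref{linear system of canonical divisors} (the canonical-system description of $\td{\a}|_{W_2^0}$ onto the theta divisor). Your additional care about the parity bookkeeping and the identification of $\Theta$ with the Prym theta divisor simply makes explicit what the paper's one-line proof leaves implicit.
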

\begin{proof}
This follows immediately from Theorem \ref{parity and birational type }, Proposition \ref{B_0 modules moduli = Prym}, and Example \ref{linear system of canonical divisors}. 
\end{proof}

\subsection{Instanton bundles on cubic threefolds and twisted Higgs bundles}

A rank 2 vector bundle $E$ on $Y$ is called an instanton bundle of minimal charge if $E$ is Gieseker semistable and $c_1(E)=0,c_2(E)=2$ and $c_3(E)=0.$ We will simply call it an instanton bundle for the rest of this section.

It is known that (see e.g. \cite{Druel}) there exist the moduli space of stable instanton bundles $\f M_Y$ and its compactification by the moduli space of semistable instanton sheaves $\overline{\f M}_Y$. Now, the intermediate Jacobian $J(Y)$ of a cubic threefold $Y$ has birationally a modular interpretation as the moduli space $\f M_Y$ of instanton bundles, via Serre's construction by the works of Markushevich, Tikhomirov, Iliev and Druel: 
    \begin{thm}[\cite{markushevich1998}\cite{Iliev2000}\cite{Druel}\cite{beauville02cubic}]\label{instanton moduli = intermediate  J}
    The compactification of $\f M_Y$ by the moduli space $\overline{\f M}_Y$ of rank 2 semistable sheaves with $c_1=0,c_2=2,c_3=0$ is isomorphic to the blow-up of $J(Y)$ along a translate of $-F(Y)$. Moreover, it induces an open immersion of $\f M_Y$ into $J(Y). $ 
    \end{thm}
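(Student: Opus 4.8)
The plan is to combine Serre's construction with the Abel--Jacobi map of $Y$, so that $J(Y)$ appears as the target of a map out of the Hilbert scheme of elliptic quintic curves in $Y$. First I would set up Serre's construction. A stable instanton bundle $E$ has $\det E\cong\O_Y$, hence $E\cong E^\vee$, and its twist $E(1)$ satisfies $\det E(1)\cong\O_Y(2)$, $c_2(E(1)) = 5$ and $\chi(E(1)) = 6$; for a general instanton one checks $H^{>0}(E(1)) = 0$, so a general section $s\in H^0(E(1))$ cuts out a smooth connected curve $C\subset Y$ with $N_{C/Y}\cong E(1)|_C$, whence by adjunction $\omega_C\cong\omega_Y(2)|_C\cong\O_C$ --- a normal elliptic quintic. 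Conversely, for such a $C$ the Cayley--Bacharach condition relative to $|\omega_Y(2)| = |\O_Y|$ is vacuous and $\Ext^1_Y(I_{C/Y}(2),\O_Y)\cong H^1(C,\O_C)^\vee\cong\C$, so there is a unique locally free extension $0\to\O_Y\to E(1)\to I_{C/Y}(2)\to 0$, and for general $C$ the bundle $E := E(1)(-1)$ is a stable instanton. This presents $\f M_Y$ as birational to the quotient of (the distinguished component of) the Hilbert scheme $\m H$ of elliptic quintics in $Y$ by the fibres of the natural rational map $\m H\dashrightarrow\f M_Y$; the fibre over a general $[E]$ is $\P(H^0(E(1)))\cong\P^5$, consistently with the tangent-space count $\dim\m H = h^0(N_{C/Y}) = 10 = \dim\f M_Y + 5$.

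Next I would transport $\m H$ to $J(Y)$ via the Abel--Jacobi map $\alpha\colon\m H\to J(Y)$, $C\mapsto\alpha_Y(C-C_0)$ for a fixed base quintic $C_0$. Since $\alpha$ is constant on each fibre $\P^5$ of $\m H\dashrightarrow\f M_Y$ (a morphism from a rational variety to an abelian variety is constant), it descends to a rational map $\bar\alpha\colon\f M_Y\dashrightarrow J(Y)$. As $\dim\f M_Y = \dim J(Y) = 5$, it remains to show $\bar\alpha$ is dominant and of degree one. Dominance is easy: degenerate $C$ to five general lines and use that the Abel--Jacobi images of lines span $J(Y)$. Degree one is the crux, and the step I expect to be the main obstacle: one must reconstruct $E$ from the class $\bar\alpha([E])$, equivalently show that a general fibre of $\alpha$ on $\m H$ is a single $\P^5$. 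I would split this into an infinitesimal part --- identify the differential of Abel--Jacobi at $[C]$ through Griffiths' residue description and check that its kernel is precisely the tangent space to the $\P^5$ of sections, so that $d\bar\alpha$ is injective at a general $[E]$ --- and a global part, using the geometry of elliptic quintics on $Y$ (the analysis of the Abel--Jacobi fibres in Markushevich--Tikhomirov and Iliev) to show that $\alpha(C) = \alpha(C')$ forces $C$ and $C'$ to be zero loci of sections of a common $E(1)$. Together these give the birational equivalence $\f M_Y\dashrightarrow J(Y)$.

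Finally, for the sharp statement about $\overline{\f M}_Y$ I would analyse the boundary. A non-locally-free semistable rank $2$ sheaf with $(c_1,c_2,c_3) = (0,2,0)$ has reflexive hull a bundle of strictly smaller $c_2$, and one shows these sheaves are parametrised by the Fano surface of lines $F(Y)$: to a line $\ell$ one attaches, up to twist, the kernel of a general surjection $\O_Y^{\oplus 2}\twoheadrightarrow\O_\ell(1)$, whose Chern classes are the required ones. Tracking twists, $\bar\alpha$ carries these boundary points onto the Abel--Jacobi image of $F(Y)$ --- a translate of $-F(Y)$ in $J(Y)$ --- while the deformation data of such a sheaf recovers the normal directions to $F(Y)\subset J(Y)$, i.e. the fibre of $\P(N_{F(Y)/J(Y)})$. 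I would then promote $\bar\alpha$ to a morphism $\overline{\f M}_Y\to\mathrm{Bl}_{-F(Y)}J(Y)$ --- everywhere defined because $\overline{\f M}_Y$ is smooth ($\Ext^2$ vanishes for these sheaves) and the target is smooth and proper --- which by the preceding steps and the boundary analysis is bijective on closed points, hence an isomorphism by Zariski's main theorem. Restricting away from the exceptional divisor yields the open immersion $\f M_Y\hookrightarrow J(Y)$.
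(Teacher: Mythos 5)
This theorem is not proved in the paper at all: it is imported verbatim from Markushevich--Tikhomirov, Iliev, Druel and Beauville, so the only meaningful benchmark is the argument in those references, and your outline does follow their route (Serre's construction linking instantons to elliptic quintics --- your numerology $c_2(E(1))=5$, $\chi(E(1))=6$, $\dim \m H=10$ is correct --- then the Abel--Jacobi map, then a boundary analysis \`a la Druel). Two points, however, keep this from being a proof. The first is that the step you yourself identify as the crux --- that a general Abel--Jacobi fibre on the Hilbert scheme of elliptic quintics is a single $\P^5$, equivalently that $\f M_Y\dashrightarrow J(Y)$ has degree one --- is only gestured at; this is exactly the hard content of Markushevich--Tikhomirov and Iliev, so as written your text is an outline deferring the central point to the sources rather than an independent argument.

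The second point is a concrete error in the boundary analysis. For a line $\ell\subset Y$, the kernel $K$ of a surjection $\O_Y^{\oplus 2}\twoheadrightarrow \O_\ell(1)$ has $\ch_2(K)=-[\ell]$, hence $c_2(K)=1$, not $2$, and no twist can repair this without changing $c_1$; moreover, up to automorphisms there is essentially one such surjection per line, so these kernels form only a $2$-dimensional family, whereas the exceptional divisor of $\mathrm{Bl}_{-F(Y)}J(Y)$ has dimension $4$. In Druel's description the non-locally-free semistable sheaves with $(c_1,c_2,c_3)=(0,2,0)$ are attached to \emph{conics} $C\subset Y$: generically they are kernels of surjections $\O_Y^{\oplus 2}\twoheadrightarrow \theta$ with $\theta$ a line bundle of degree $1$ on $C$ (then $\ch_2=-[C]$ gives $c_2=2$, and a Riemann--Roch count gives $c_3=0$), and the $4$-dimensional family of conics fibres over $F(Y)$ in $\P^2$'s via the residual line of the plane spanned by $C$ --- which is precisely what produces the projectivized normal bundle over the translate of $-F(Y)$. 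With the line-based description both the dimension count and the identification of the exceptional divisor fail, so the final blow-up statement, and with it the open immersion $\f M_Y\hookrightarrow J(Y)$, cannot be concluded as written.
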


We recall a theorem in \cite{Lahoz15ACM} relating instanton bundles and $\m B_0$-modules. Recall that we can embed the Fano surface of lines $F(Y)$ in $J(Y)$ as $F(Y)\hookrightarrow Alb(F(Y))\xrightarrow{\sim}J(Y)$ by picking $l_0$ as the base point. We denote by $\overline{F(Y)}$ the strict transform of $F(Y)$ under the blow-up in Theorem \ref{instanton moduli = intermediate  J}.
\begin{thm}[\cite{Lahoz15ACM}]\label{instanton moduli = B_0 modules moduli}
The moduli space $\f M_{2,-4}$ is isomorphic to the blow-up of $\overline{\f M}_Y$ along the strict transform $\overline{F(Y)}$ of $F(Y). $ In particular, $\f M_Y$ is birational to $\f M_{2,-4}.$
\end{thm}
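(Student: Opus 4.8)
The plan is to realise the correspondence via the embedding functor $\Xi_3\colon\m Ku(Y)\hookrightarrow\D^b(\P^2,\m B_0)$ associated to the conic bundle $\pi\colon\td Y\to\P^2$ on the blow-up $\sigma\colon\td Y\to Y$, and to upgrade it to a map of moduli spaces. First I would show: if $E$ is a stable locally free instanton on $Y$, then (by the instanton vanishing conditions) $E\in\m Ku(Y)$, and $\Xi_3(E)$ is, up to a shift, a semistable $\m B_0$-module with underlying Chern character $(0,4,-4)$, hence a point of $\f M_{2,-4}$. Concretely this breaks into (i) a $K$-theory computation of $[\Xi_3(E)]$ using the explicit Fourier--Mukai kernel of $\Xi_3$ together with the blow-up and conic-bundle semiorthogonal decompositions (\ref{semiorthogonal decomposition: quadric fibration}), (\ref{embedding functor}); this is bookkeeping-heavy but routine and yields $\ch(\mathrm{Forg}(\Xi_3(E)))=(0,4,-4)$, so the support is a conic in $\P^2$; and (ii) a check that for generic $E$ the object $\Xi_3(E)$ is concentrated in one cohomological degree and is a rank-$2$ vector bundle on a smooth conic, which is automatically a semistable $\m B_0$-module by Corollary \ref{rootstack over curves}. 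Carrying this out in families over $\f M_Y$ produces a morphism $\f M_Y\to\f M_{2,-4}$.

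Next I would establish birationality. Full faithfulness of $\Xi_3$ gives isomorphisms $\Hom(E,E')\xrightarrow{\sim}\Hom(\Xi_3E,\Xi_3E')$ and $\Ext^1(E,E)\xrightarrow{\sim}\Ext^1(\Xi_3E,\Xi_3E)$, so the morphism $\f M_Y\to\f M_{2,-4}$ is injective on closed points and an immersion. Both spaces are irreducible of dimension $5$: $\dim\f M_Y=\dim J(Y)=h^{1,2}(Y)=5$ by Theorem \ref{instanton moduli = intermediate  J}, and $\dim\f M_{2,-4}=\dim W_2^i=\dim\Prym(\td\Delta,\Delta)=g(\Delta)-1=5$ since $\Delta$ is a smooth plane quintic of genus $6$ (Example \ref{linear system of canonical divisors}, using $\O_{\P^2}(2)|_\Delta=K_\Delta$). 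Hence the immersion is open and dense, so $\f M_Y$ is birational to $\f M_{2,-4}$; this already proves the ``in particular'' clause.

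To get the precise blow-up statement I would extend the construction to the compactification. Applying $\Xi_3$ to the universal family of semistable instanton \emph{sheaves} over $\overline{\f M}_Y$ gives a family of objects of $\D^b(\P^2,\m B_0)$ which is a family of honest semistable $\m B_0$-modules away from a closed subset $Z\subset\overline{\f M}_Y$ --- the locus where $\Xi_3$ of the instanton sheaf acquires a nonzero higher cohomology sheaf (equivalently, where the associated conic in $\P^2$, or the curve on $Y$ produced by the Serre construction, degenerates). One identifies $Z$, with its scheme structure, with the strict transform $\overline{F(Y)}$ of the Fano surface $F(Y)\subset J(Y)\subset\overline{\f M}_Y$. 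Pulling the family back to $\mathrm{Bl}_{\overline{F(Y)}}\overline{\f M}_Y$ and performing the standard elementary modification along the exceptional divisor turns it into a flat family with semistable $\m B_0$-module fibres, hence defines a morphism $b\colon\mathrm{Bl}_{\overline{F(Y)}}\overline{\f M}_Y\to\f M_{2,-4}$. By the previous paragraph $b$ is an isomorphism over $\overline{\f M}_Y\setminus\overline{F(Y)}$; over the exceptional divisor one checks directly, using the representation-theoretic description of degenerate even Clifford algebras (Proposition \ref{degenerate even Clifford = path algebra}), that the $\m B_0$-modules sitting above a fixed point $[l]$ form a $\P^2$ identified with $\P\big(N_{\overline{F(Y)}/\overline{\f M}_Y,\,[l]}\big)$. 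Thus $b$ is proper, birational and bijective onto the normal variety $\f M_{2,-4}$, hence an isomorphism, so $\f M_{2,-4}\cong\mathrm{Bl}_{\overline{F(Y)}}\overline{\f M}_Y$. (Equivalently one verifies that $b^{-1}$ extends to a morphism outside $\overline{F(Y)}$ and that $\f M_{2,-4}$ carries an effective exceptional divisor over $\overline{F(Y)}$, and invokes a blow-up recognition criterion.)

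The main obstacle is the analysis along $Z=\overline{F(Y)}$: one must show that the non-sheaf locus of $\Xi_3$ on the universal instanton family is \emph{exactly} $\overline{F(Y)}$ with the correct scheme structure, and then carry out the local computation showing that after the elementary modification the fibres over the exceptional divisor are parametrised precisely by $\P(N_{\overline{F(Y)}/\overline{\f M}_Y})$. This requires understanding how instanton sheaves, or the curves arising from the Serre construction, degenerate near a line $l\subset Y$, and matching those degenerations with the $\m B_0$-modules supported on singular conics through the quiver picture of Proposition \ref{degenerate even Clifford = path algebra}. The $K$-theory computation for $\Xi_3$, though only ``routine'', also needs care because of the many twists entering the blow-up and conic-bundle semiorthogonal decompositions.
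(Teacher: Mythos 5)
This theorem is not proved in the paper at all: it is imported verbatim from \cite{Lahoz15ACM}, and the surrounding text only recalls the functor $\Xi(E)=\Psi(\sigma^*E)$ and the fact (\cite[Lemma 3.9]{Lahoz15ACM}) that $\Xi(E)$ is concentrated in a single degree for \emph{every} instanton bundle, not just a generic one. So your proposal must stand on its own as a reproof of the Lahoz--Macr\`i--Stellari result. Your first two paragraphs do follow the right general strategy and are essentially sound as a sketch: instantons lie in $\m Ku(Y)$, $\Xi$ sends them to semistable $\m B_0$-modules of class $(0,4,-4)$, and full faithfulness plus a dimension count yields birationality onto the image. Even here, though, note that concluding density requires knowing that $\f M_{2,-4}$ is irreducible (or at least that the image lands in a unique $5$-dimensional component); you assert $\dim\f M_{2,-4}=\dim W_2^i=5$, which implicitly leans on the main theorem of this paper or on an unproved irreducibility statement, and should be argued (e.g.\ via the finiteness of the fibres of $\Upsilon$ over $U_2$ from Theorem \ref{fibers as even card subsets} together with an irreducibility argument).

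The genuine gap is your third paragraph, which is where the actual content of the blow-up statement lives. You assert, without argument, that the locus where $\Xi$ applied to the universal instanton sheaf fails to give a (semistable) $\m B_0$-module is \emph{exactly} the strict transform $\overline{F(Y)}$, with the correct scheme structure; that an elementary modification on $\mathrm{Bl}_{\overline{F(Y)}}\overline{\f M}_Y$ produces a flat family of semistable modules; and that the fibres over the exceptional divisor are canonically $\P\bigl(N_{\overline{F(Y)}/\overline{\f M}_Y}\bigr)$. None of this is verified, and the first claim is at best a guess: nothing in your outline identifies which instanton sheaves correspond to points of $F(Y)\subset J(Y)$ (embedded using the base point $l_0$), nor why the degeneration of $\Xi$ is governed by that surface rather than, say, by Druel's boundary divisor of non-locally-free sheaves or by the divisor of instantons for which $l_0$ is a jumping line. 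Finally, your closing step ``proper, birational and bijective onto the normal variety $\f M_{2,-4}$, hence an isomorphism'' presupposes normality (or smoothness) of $\f M_{2,-4}$, which you never establish and which is itself a nontrivial input in the original argument. As written, then, this is a reasonable research plan that reproduces the shape of the cited proof, but the decisive steps --- identification of the blow-up centre, the local analysis along it, and the recognition of the blow-up --- are missing.
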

For a stable instanton bundle $E\in \overline{\f M}_Y$, the image of $E$ in $ \f M_{2,-4}$ under the birational map in Theorem \ref{instanton moduli = B_0 modules moduli} is induced by a functor $\Xi:\m Ku(Y) \hookrightarrow \D^b(\P^{2},\m B_0) $ which can be described explicitly as follows. First, we define the functor 
\begin{equation*}
    \Psi :\textrm{D}^b(\td{Y}) \to \textrm{D}^b(\P^2,\m B_0), \quad E\mapsto \pi_*((E) \otimes_{\O_{\td{Y}}} \m E \otimes_{\O_{\td{Y}}} \det F^\vee[1])
\end{equation*}
where $\m E$ is a rank 2 vector bundle with a natural structure of flat left $\pi^*\m B_0$-module. For details of the definition, we refer to \cite{kuznetsov2008derived}. Then $\Xi(E) = \Psi(\sigma^*(E))$ which makes sense as it can be checked that instanton bundles on $Y$ are naturally objects in $\m Ku(Y)$ \cite[Section 3.2]{Lahoz15ACM}. While $\Xi(E)$ is a priori a complex, it turns out that $\Xi(E)$ is concentrated in only one degree \cite[Lemma 3.9]{Lahoz15ACM}, so $\Xi(E)$ is indeed a $\m B_0$-module. 

On the other hand, recall that for an \'{e}tale double cover $p:\td{\Delta}\to \Delta$, there is an associated 2-torsion line bundle $\pi:\xi\to \Delta$ such that $\td{\Delta}$ is recovered as the cyclic cover of $\xi$ and the section $1\in \xi^{\otimes 2}\cong \O_\Delta$ i.e. $\td{\Delta}$ is embedded in  $\textbf{Tot}(\xi)$  as the zero locus of $  t^{\otimes 2}-\pi^* 1$ where $t$ is the tautological section of $\pi^*\xi.$ Recall that a $\xi$-twisted $SL_2$-\emph{Higgs bundle} on a curve $\sm$ is a pair $(V,\phi)$ consisting of a rank 2 vector bundle $V$ with a fixed determinant line bundle $L$ and $\phi\in H^0(\sm, \m End_0(V)\otimes \xi)$. Since we will only deal with this case, We simply call it a twisted Higgs bundle. The spectral correspondence \cite{beauville1989spectral} says that pushing forward a line bundle $N$ on $\td{\Delta}$ gives a twisted Higgs bundle $(p_*N, p_*t) $ on $\Delta$. In fact, $\Prym(\td{\Delta},\Delta)$ parametrizes all twisted Higgs bundles on $\Delta$ with the spectral curve defined by $t^{\otimes 2}- \pi^* 1$. Since the Hitchin base $H^0(\Delta, \xi^{\otimes 2})= H^0(\Delta, \O_\Delta) = \C$, all smooth spectral curves (defined away from $0\in \C$) are isomorphic to each other.

Combining the functor $\Xi$ which induces a birational map $\f M_Y\dashrightarrow \f M_{2,-4}$, the birational map $\Phi: \f M_{2,-4} \dashrightarrow W^1_2$, the Abel-Jacobi map $\td{\a}: \td{\Delta}^{(10)} \to J^{10}\td{\Delta}$ and the spectral correspondence, we obtain an explicit correspondence between instanton bundles on $Y$ and $\xi$-twisted Higgs bundles on $\Delta$:
    \begin{center}
     \begin{tikzpicture}[
     squarednode1/.style={rectangle, draw=black!700,  thick, minimum size=5mm},
     ]
     \node[squarednode1](b1){Instanton bundles on $Y$}; 
     \node[squarednode1](b2)[below=0.8cm of b1] {$\m B_0$-modules on $\P^2$ };
     \node[squarednode1](b3)[below=0.8cm of b2] {Line bundles on $\td{\Delta}$};
     \node[squarednode1](b4)[below=0.8cm of b3] {twisted Higgs bundles on $\Delta$ with spectral curve $\td{\Delta}\to \Delta$};
     \draw[->] (b1) -- node[right]{$\Xi$} (b2);
     \draw[->] (b2) -- node[right]{$\td{\a}\circ \Phi$} (b3);
     \draw[->] (b3) -- node[right]{$p_*$} (b4);
     \end{tikzpicture}

\end{center}
The correspondences of different objects here hold as birational maps between the corresponding moduli spaces.

\printbibliography

\end{document}